\documentclass{amsart}

\usepackage{amsmath}
\usepackage{amssymb}
\usepackage{eucal}
\usepackage{amscd}
\usepackage{delarray}
\usepackage{times}

\DeclareMathOperator{\im}{im}

\DeclareMathOperator{\id}{id}

\DeclareMathOperator{\dom}{dom}

\newcommand{\sm}{\smallsetminus}

\newcommand{\ol}{\overline}

\renewcommand{\epsilon}{\varepsilon}
\newcommand{\FF}{\mathcal{F}}

\newcommand{\GG}{\mathcal{G}}

\newcommand{\HH}{\mathcal{H}}
\newcommand{\VV}{\mathcal{V}}

\newcommand{\Z}{\mathbb{Z}}

\newcommand{\R}{\mathbb{R}}

\newtheorem{theorem}{Theorem}[section]
\newtheorem{lemma}[theorem]{Lemma}
\newtheorem{cor}[theorem]{Corollary}
\newtheorem{prop}[theorem]{Proposition}
\newtheorem{claim}{Claim}

\newtheorem*{teorema}{Theorem}

\theoremstyle{definition}
\newtheorem{defn}[theorem]{Definition}
\newtheorem*{rem}{Remark}
\newtheorem*{rems}{Remarks}

\newtheorem{example}[theorem]{Example}
\numberwithin{equation}{section}

\title[Riemannian foliations]{Topological description of Riemannian foliations with dense leaves}
\begin{document}

\author[Jes\'us A. \'Alvarez L\'opez]{J. A. \'Alvarez L\'opez*}
\address{$^*$Departamento de Xeometr\'\i a e Topolox\'\i a \\
Facultade de Matem\'aticas \\
Universidade de Santiago de Compostela \\
Campus Universitario Sur \\
15706 Santiago de Compostela \\Spain }
\thanks{*Research of the first author supported by DGICYT Grant
PB95-0850} 
\email{jesus.alvarez@usc.es}
\author[Alberto Candel]{A. Candel$^\dagger$}
\address{$^\dagger$ Department of Mathematics \\ CSUN \\ Northridge, CA
91330 \\ U.S.A.}

\thanks{$^\dagger$Research of the second author partially supported by
  NSF Grant DMS-0049077.}  \email{alberto.candel@csun.edu}

\date{\today}


\maketitle

\tableofcontents

\section*{Introduction}

Riemannian foliations occupy an important place in geometry. An excellent
survey is A.~Haefliger's Bourbaki seminar~\cite{haefliger}, and the book of
P.~Molino~\cite{molino} is the standard reference for Riemannian
foliations. In one of the appendices to this book, E. Ghys proposes the
problem of developing a theory of equicontinuous foliated spaces paralleling
that of Riemannian foliations; he uses the suggestive term  ``qualitative
Riemannian foliations'' for such foliated spaces.

In our previous paper~\cite{equicont}, we discussed the structure of
equicontinuous foliated spaces and, more generally, of equicontinuous
pseudogroups of local homeomorphisms of topological spaces. This concept
was difficult to develop because of
the nature of pseudogroups and the failure of having an infinitesimal
characterization of local isometries, as one does have in the
Riemannian case. These difficulties give rise to two versions of
equicontinuity: a weaker version seems to be more
natural, but a stronger version is more useful to generalize
topological properties of Riemannian foliations. Another
relevant property for this purpose is quasi-effectiveness, which is a
generalization to pseudogroups of effectiveness for group actions. In
the case of 
locally connected foliated spaces, quasi-effectiveness is equivalent to the
quasi-analyticity introduced by Haefliger \cite{Haefliger85}. For
instance, the following well-known topological properties of Riemannian
foliations were generalized to strongly equicontinuous quasi-effective
compact foliated spaces \cite{equicont}; let us remark that we also assume
that all foliated spaces are locally compact and Polish:
\begin{itemize}

\item Leaves without holonomy are quasi-isometric to one another (our original
motivation).

\item Leaf closures define a partition of the space. So the
foliated space is transitive (there is a dense leaf) if and only if it
is minimal (all leaves are dense).

\item The holonomy pseudogroup has a closure defined by using the
compact-open topology on small enough open subsets. 

\end{itemize}

In this paper, we show, in fact, that there are few ways of
constructing nice equicontinuous foliated spaces beyond Riemannian
foliations. The definition of Riemannian foliation used here is
slightly more general than usual: a foliation is called Riemannian
when its holonomy pseudogroup is given by local isometries of some
Riemannian manifold (a Riemannian pseudogroup); thus leafwise
smoothness is not required. Our main result is the following purely
topological characterization of Riemannian foliations with dense
leaves on compact manifolds.
 
\begin{teorema}\label{main theorem}
Let $(X,\FF)$ be a transitive compact foliated space. Then $\FF$ is a
Riemannian foliation if and only if $X$ is locally connected and finite
dimensional, $\FF$ is strongly equicontinuous and quasi-analytic, and the
closure of its holonomy pseudogroup is quasi-analytic. 
\end{teorema}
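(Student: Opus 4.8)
The plan is to treat the two implications separately; the forward direction is routine, while the converse carries the real content. For the \emph{only if} direction, assume $\FF$ is Riemannian, so that $\HH$ is generated by local isometries of a Riemannian manifold $M$ serving as transversal. Local isometries of a fixed Riemannian manifold preserve the distance, hence form a strongly equicontinuous family, which gives strong equicontinuity of $\FF$; and the manifold structure of $M$ provides local connectedness and finite dimensionality of $X$. Quasi-analyticity of $\HH$ is exactly the rigidity of isometries: an isometry of a connected Riemannian manifold that restricts to the identity on a nonempty open set is globally the identity. Finally, any germ in the closure $\ol\HH$ taken in the compact-open topology is a limit of local isometries and is therefore again a local isometry, so $\ol\HH$ inherits the same rigidity and is quasi-analytic.

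For the \emph{if} direction I would work on a transversal $T$ and pass immediately to the closure $\ol\HH$. Strong equicontinuity makes $\ol\HH$ a genuine pseudogroup, and Ascoli--Arzel\`a renders it locally compact for the compact-open topology. Transitivity of $(X,\FF)$ supplies a dense leaf, so by the minimality result recalled in the introduction every leaf is dense; equivalently, $\ol\HH$ has a single orbit closure, and $T$ acquires locally the structure of a homogeneous space. Fixing $x\in T$, the germs of elements of $\ol\HH$ fixing $x$ form a locally compact isotropy group $H_x$, and the hypothesis that $\ol\HH$ is quasi-analytic guarantees that this germ-level action is effective. The crucial point is that effectiveness must be available for the \emph{closure} and not merely for $\HH$, which is precisely why both quasi-analyticity hypotheses are imposed.

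The heart of the proof is to promote this locally compact homogeneous picture to a smooth one. Here the hypotheses of finite dimensionality and local connectedness enter decisively: they permit an appeal to the Montgomery--Zippin solution of Hilbert's fifth problem for transformation groups, to the effect that a locally compact group acting effectively on a finite-dimensional, locally connected, locally compact space is a Lie group. This excludes the Hilbert--Smith phenomenon of small $p$-adic subgroups and forces the local structure group of $\ol\HH$, together with the isotropy $H_x$, to be a Lie group with $H_x$ compact. \textbf{The main obstacle} is to carry this transformation-group machinery over from the group to the pseudogroup setting: one must assemble the germ-level isotropy data coherently across $T$ into a bona fide locally compact transformation group before the classical theorems apply, and it is here that quasi-analyticity of $\ol\HH$ performs the essential bookkeeping.

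With $T$ now locally modeled on a homogeneous space $G/H_x$ of a Lie group $G$, hence a manifold, the construction finishes by averaging. Starting from any Riemannian metric on $T$ and averaging the inner product at $x$ over the compact isotropy $H_x$ with respect to its Haar measure produces an $H_x$-invariant inner product, which extends uniquely to a $G$-invariant, and therefore $\ol\HH$-invariant, Riemannian metric on $T$. Since $\HH\subset\ol\HH$, the holonomy pseudogroup acts by local isometries of this metric, so $\FF$ is Riemannian, completing the proof.
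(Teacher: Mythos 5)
Your ``only if'' direction is fine (the paper treats it as obvious). The genuine gap is in the converse, at exactly the point you flag as the main obstacle. You invoke Montgomery--Zippin in the form ``a locally compact group acting effectively on a finite-dimensional, locally connected, locally compact space is a Lie group.'' No such theorem exists: without transitivity this statement \emph{is} the Hilbert--Smith conjecture, which is open, and the actual Montgomery--Zippin theorem (for transitive effective actions) applies to genuine topological groups acting globally. But what the equicontinuity machinery produces here is not a group: it is a \emph{local} group --- the closure, in the compact-open topology, of holonomy transformations defined on a fixed compact set $D$, with a multiplication that is only partially defined. You correctly identify that assembling the germ-level data into ``a bona fide locally compact transformation group'' is the crux, but you offer no argument for it, and indeed no global group is available in this setting; saying that quasi-analyticity ``performs the essential bookkeeping'' does not bridge this. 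The entire difficulty of the theorem lives at this step.

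The paper's route through the obstacle is substantively different. Compact generation plus strong equicontinuity (Proposition~\ref{p:equicontinuous}) allows all relevant maps to be extended to a common domain, so that $G=\ol{\HH}_{DD'}$ becomes a \emph{compact local group} acting locally transitively near a point $x_0$, with compact isotropy $G_0$ and $G/G_0$ locally homeomorphic to $Z$, hence finite dimensional. Then, in place of the global Montgomery--Zippin theory, the paper uses Jacoby's \emph{local} solution of Hilbert's fifth problem: approximation by local Lie quotients, the splitting of a finite-dimensional metrizable locally compact local group as a product of a local Lie group and a compact zero-dimensional group, and ``no small subgroups implies local Lie'' (Theorems~\ref{t:jacoby-without small subgroups}--\ref{t:jacoby-product}), together with a local version of Montgomery--Zippin's Theorem~6.2.2 (Theorem~\ref{t:main-claim}) to establish finite dimensionality of $G$ itself. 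Quasi-analyticity of $\ol{\HH}$ enters at one precise place, Lemma~\ref{l:nonormal}: $G_0$ contains no nontrivial normal sub-local group of $G$. And local connectedness of $Z$ is used decisively --- not merely as a hypothesis of a quoted theorem --- to force the zero-dimensional factor to be finite, whence $G$ is a local Lie group; only then does your concluding averaging construction (the paper's Example~\ref{e:lie}) produce the invariant Riemannian metric. As it stands, your proposal misstates the key classical input and leaves the local-to-global passage, which is the theorem's actual content, unproved.
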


This theorem is a direct consequence of the corresponding result for
pseudogroups, whose proof uses the material developed in \cite{equicont} as
well as the local version of the solution of Hilbert 5th problem due to
R.~Jacoby~\cite{jacoby}. 

An earlier result in this direction was that of M. Kellum
\cite{kellum93,kellum94} who proved this property for certain
pseudogroups of uniformly Lipschitz diffeomorphisms of Riemannian
manifolds. Also, R.~Sacksteder work~\cite{sacksteder} can be
interpreted as giving a characterization of Riemannian pseudogroups of
one-dimensional manifolds. Another similar result, proved by
C.~Tarquini \cite{Tarquini}, states that equicontinuous transversely
conformal foliations are Riemannian; note that, in the case of dense
leaves, this result of Tarquini follows easily from our main theorem.

\section{Local groups and local actions}

The concept of local group and allied notions is developed in
Jacoby~\cite{jacoby}. Some of these notions are recalled in this
section, for ease of reference.

\begin{defn}
A local group is a quintuple $(G,e,\cdot,\,',\mathfrak{D})$ satisfying the
following conditions:
\begin{enumerate}
\item[(1)] $(G,\mathfrak{D})$ is a topological space;
\item[(2)] $\cdot$ is a function from a subset of $G\times G$ to $G$;
\item[(3)] $\,'$ is a function from a subset of $G$ to $G$;
\item[(4)] there is a subset $O$ of $G$ such that 
\begin{itemize}
\item[(a)] $O$ is an open neighborhood of $e$ in $G$,
\item[(b)] $O\times O$ is a subset of the domain of $\cdot$.
\item[(c)] $O$ is a subset of the domain of $\,'$,
\item[(d)] for all $a,b,c\in O$, if $a\cdot b$ and $b\cdot c$ $\in O$,
  then $(a\cdot b)\cdot c=(a\cdot b)\cdot c$.
\item[(e)] for all $a\in O$, $a'\in O$, $a\cdot e=e\cdot a=a$ and
$a'\cdot a=a\cdot a'=e$,
\item[(f)]  the map $\cdot:O\times O\to G$ is continuous,
\item[(g)] the map $\,':O\to G$ is continuous;
\end{itemize}
\item[(5)] the set $\{e\}$ is closed in $G$.
\end{enumerate}
\end{defn}

Jacoby employs the notation $\mathfrak{G}$ for the quintuple
$(G,e,\cdot,\,',\mathfrak{D})$, but here it will be simply denoted by
$G$. 

The collection of all sets $O$ satisfying condition (4) will be
denoted by $\Psi G$. This is a neighborhood base of $e\in G$; all of
these neighborhoods are symmetric with respect to the inverse
operation $(3)$.  Let $\Phi(G,n)$ denote the collection of subsets $A$
of $G$ such that the product of any collection of $\le n$ elements of
$A$ is defined, and the set $A^n$ of such products is contained in
some $O\in \Psi G$.

If $G$ is a local group, then $H$ is a subgroup of $G$ if $H\in
\Phi(G,2)$, $e\in H$, $H'=H$ and $H\cdot H=H$. 

If $G$ is a local group, then $H\subset G$ is a sub-local group of $G$
in case $H$ is itself a local group with respect to the induced
operations and topology.

If $G$ is a local group, then $\Upsilon G$ denotes the set of all
pairs $(H,U)$ of subsets of $G$ so that (1) $e\in H$; (2) $U\in \Psi
G$; (3) for all $a,b\in U\cap H$, $a\cdot b\in H$; (4) for all $c\in
U\cap H$, $c'\in H$.

Jacoby~\cite[Theorem 26]{jacoby} proves that $H\subset G$ is a
sub-local group if and only if there exists $U$ such that $(H,U)\in
\Upsilon G$.

Let $G$ be a local group and let $\Pi G$ denote the pairs $(H,U)$ so
that (1) $e\in H$; (2) $U\in \Psi G\cap \Phi(G,6)$; (3) for all
$a,b\in U^6\cap H$, $a\cdot b\in H$; (4) for all $c\in U^6\cap H$,
$c'\in H$; (5) $U^2\sm H$ is open. Given such a pair $(H,U)\in \Pi G$,
there is a (completely regular, hausdorff) topological space $G/(U,H)$
and a continuous open surjection $$T:U^2\to G/(U,H)$$ such that
$T(a)=T(b)$ if and only if $a'\cdot b\in H$ (\textit{cf.}
\cite[Theorem 29]{jacoby}).

If $(H,V)$ is another pair in $\Pi G$, then the spaces $G/(H,U)$ and
$G/(H,V)$ are locally homeomorphic in an obvious way.  Thus the
concept of coset space of $H$ is well defined in this sense, as a germ
of a topological space. The notation $G/H$ will be used in this sense;
and to say that $G/H$ has certain topological property will mean that
some $G/(H,U)$ has such property.

Let $\Delta G$ be the set of pairs $(H,U)$ such that $(H,U)\in \Pi G$
and, for all $a\in H\cap U^4$ and $b\in U^2$, $b'\cdot (a \cdot b)\in
H$. A subset $H\subset G$ is called a normal sub-local group of $G$ if
there exists $U$ such that $(H,U)\in \Delta G$. If $(H,U)\in \Delta G$
then the quotient space $G/(H,U)$ admits the structure of a local
group (see \cite[Theorem 35]{jacoby} for the pertinent details) and
the natural projection $T:U^2\to G/(H,U)$ is a local homomorphism. As
before, another such pair $(H,V)$ produces a locally isomorphic
quotient local group.

Let us recall the main results of Jacoby~\cite{jacoby} on the
structure of locally compact local groups because they will be needed
in the sequel.

\begin{theorem}[{Jacoby~\cite[Theorem~96]{jacoby}}]
\label{t:jacoby-without small subgroups}
  Any locally compact local group without small subgroups is a local Lie
  group.
\end{theorem}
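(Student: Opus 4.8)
The plan is to adapt to the local setting the Gleason--Montgomery--Zippin--Yamabe solution of Hilbert's fifth problem, whose central mechanism is the construction of enough one-parameter subgroups to produce analytic coordinates near $e$ via an exponential map. The no-small-subgroups hypothesis is precisely what makes one-parameter subgroups rigid and abundant, and the entire argument is organized around extracting from them a finite-dimensional real vector space. Compared with the global case, the essential additional care is to keep every iterated product inside the domain on which the partial operation is defined; this is exactly the bookkeeping that the neighborhood base $\Psi G$ and the sets $\Phi(G,n)$ are designed to manage.

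First I would fix $O\in\Psi G$ witnessing the hypothesis, so that $O$ contains no nontrivial subgroup of $G$, and use local compactness to choose a relatively compact symmetric working neighborhood of $e$. The first technical goal is a Gleason-type estimate: an adapted norm $\|\cdot\|$ near $e$ --- for instance an escape norm measuring how many times an element may be multiplied before leaving a fixed neighborhood --- satisfying the commutator bound $\|a'\cdot b'\cdot a\cdot b\|\lesssim\|a\|\,\|b\|$ together with approximate additivity of the norm along powers. The no-small-subgroups hypothesis forces this norm to be positive off $e$, while local compactness enters through a compactness and averaging argument used to regularize the norm and secure the commutator estimate.

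Next I would develop one-parameter subgroups as continuous local homomorphisms $\phi$ from an interval about $0$ in $\R$ into $G$. Using the Gleason estimates and a square-root extraction --- each $g$ near $e$ has a unique small $h$ with $h\cdot h=g$, obtained by a limiting procedure justified by local compactness --- I would show that every $g$ in a small neighborhood of $e$ lies on a unique such $\phi$. I would then equip the set $L$ of germs of one-parameter subgroups with a vector space structure by setting
$$(\phi+\psi)(t)=\lim_{n\to\infty}\bigl(\phi(t/n)\cdot\psi(t/n)\bigr)^{n},\qquad(\lambda\phi)(t)=\phi(\lambda t),$$
checking that for large $n$ the relevant products belong to the appropriate $\Phi(G,n)$ so that the limit exists. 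Local compactness and the no-small-subgroups property together force $L$ to be finite dimensional over $\R$.

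Finally, the exponential map $\exp(\phi)=\phi(1)$ is shown to be a homeomorphism from a neighborhood of $0$ in $L\cong\R^{m}$ onto a neighborhood of $e$, furnishing local coordinates; transporting the partial operation and establishing a Baker--Campbell--Hausdorff expansion in these coordinates exhibits it as real-analytic, so that $G$ is a local Lie group. The step I expect to be the main obstacle is the commutator estimate together with the ensuing finite-dimensionality of $L$: this is already the deep heart of the global theorem, and in the local setting it must be carried out while continually verifying that the long products $\bigl(\phi(t/n)\cdot\psi(t/n)\bigr)^{n}$ and their rearrangements never leave the domain of the operation --- delicate bookkeeping for which the $\Phi(G,n)$ formalism exists, but which makes each limiting argument more intricate than its global analogue.
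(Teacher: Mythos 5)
The paper does not prove this statement at all: it is quoted verbatim from Jacoby~\cite[Theorem~96]{jacoby} as an external input, so the only meaningful comparison is with Jacoby's own argument. At the level of strategy, your outline is the historically correct route: Jacoby's proof, like the modern treatments, adapts the Gleason--Montgomery--Zippin--Yamabe machinery (escape norms, the commutator estimate, one-parameter subgroups, an exponential chart, Baker--Campbell--Hausdorff) to the partial operation, with the no-small-subgroups hypothesis supplying positivity of the escape norm and rigidity of one-parameter subgroups, exactly as you say.

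There is, however, a genuine gap, and it sits precisely at the point you dismiss as ``delicate bookkeeping for which the $\Phi(G,n)$ formalism exists.'' In a local group, associativity is guaranteed only for products of bounded length inside a set of $\Phi(G,n)$; the Gleason-type arguments require free rearrangement of long words --- identities such as $(a^m)^k=a^{mk}$, telescoping of commutators, and the manipulation of the $n$-fold products $\bigl(\phi(t/n)\cdot\psi(t/n)\bigr)^{n}$ --- and this amounts to a strong associativity property that does \emph{not} follow from the local-group axioms, nor can it be secured merely by shrinking the neighborhood, since a local group need not be locally isomorphic to any global topological group. This is not a hypothetical worry: Jacoby's own published proof was later found to be flawed on exactly this point (a tacit assumption of global associativity), and a correct proof of the local Hilbert 5th problem requires substantial extra apparatus to control the domains on which each rearrangement is licit. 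Beyond that structural issue, each cornerstone you invoke --- regularization of the escape norm and the commutator bound $\|a'\cdot b'\cdot a\cdot b\|\lesssim\|a\|\,\|b\|$, uniqueness of square roots, finite dimensionality of the space $L$ of one-parameter germs, and that $\exp$ is a local homeomorphism --- is a theorem-sized task that your proposal asserts rather than proves; as written it is a roadmap identifying the right landmarks, not a proof.
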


In the above result, a local group without small subgroups is a local
group where some neighborhood of the identity element contains no
nontrivial subgroup.

\begin{theorem}[{Jacoby~\cite[Theorems~97--103]{jacoby}}]
\label{t:jacoby-approximated}
  Any locally compact second countable local group $G$ can be approximated by
  local Lie groups. More precisely, given $V\in \Psi G\cap\Phi(G,2)$, there
  exists $U\in \Psi G$ with $U\subset V$ and there
  exists a sequence of compact normal subgroups $F_n\subset U$ such that
  \upn{(}1\upn{)}~$F_{n+1}\subset F_n$, 
  \upn{(}2\upn{)}~$\bigcap_nF_n=\{e\}$, 
  \upn{(}3\upn{)}~$(F_n,U)\in \Delta G$, and 
  \upn{(}4\upn{)}~$G/(F_n,U)$ is a local lie group.
\end{theorem}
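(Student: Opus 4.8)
The plan is to derive this \emph{local} Gleason--Yamabe statement from the structural result already at our disposal, Theorem~\ref{t:jacoby-without small subgroups}, by manufacturing arbitrarily small compact normal subgroups whose quotients have no small subgroups. The driving reduction is this: if, for every $U\in\Psi G$ contained in $V$, one can produce a compact normal subgroup $F\subset U$ with $(F,U)\in\Delta G$ such that the quotient local group $G/(F,U)$ has \emph{no small subgroups}, then by Theorem~\ref{t:jacoby-without small subgroups} each such quotient is a local Lie group, and the assertion follows by running this construction along a neighborhood basis. So the real content is the production of the small normal subgroups with no-small-subgroups quotients, together with the bookkeeping needed to fit everything inside one fixed $U$ and to make the resulting family decreasing with trivial intersection.

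First I would set up the analytic framework. Using local compactness and second countability, I would fix a relatively compact $U\in\Psi G$ inside $V$, metrize the neighborhood filter of $e$ by a left-invariant metric, and transport a Haar-type averaging to the partially defined product by working only with continuous functions supported in small powers $U^k$, where all the products in question are defined and associative. The heart of the matter, following Gleason and Montgomery--Zippin but carried out locally, is to build good coordinate functions: bump functions on $U$ whose translates vary in a controlled, Lipschitz manner, yielding an escape norm $\|\cdot\|$ with the two Gleason estimates — an escape property $\|a^n\|\simeq n\|a\|$ valid up to the first escape time from $U$, and a commutator estimate $\|a'\cdot b'\cdot a\cdot b\|\lesssim\|a\|\,\|b\|$. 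All of this must be phrased so that only finitely many products, all landing in a fixed $O\in\Psi G$, are ever invoked, which is exactly where the local hypotheses $U\in\Phi(G,6)$ and the machinery of $\Pi G$ and $\Delta G$ enter.

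With the Gleason estimates in hand, I would obtain $F$ as (the closure of) the set of elements of $U$ that never escape $U$ under iteration, equivalently those generating a small subgroup; the commutator estimate forces this set to be a subgroup normalized by $U$, so that $(F,U)\in\Delta G$ after shrinking $U$, while the escape estimate forces $G/(F,U)$ to have no small subgroups. To assemble the full sequence I would repeat this along a decreasing neighborhood basis $U_n$ of $e$, choosing $F_n\subset U_n$, and then replace $F_n$ by $F_1\cap\dots\cap F_n$ — checking that a finite intersection is again a compact normal sub-local group relative to the fixed $U$ — to enforce $F_{n+1}\subset F_n$. Property $\bigcap_n F_n=\{e\}$ then follows from $F_n\subset U_n$ and $\bigcap_n U_n=\{e\}$, using that $\{e\}$ is closed in $G$.

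The hard part will be the local adaptation itself rather than any single estimate. In a global group all of the Gleason--Yamabe manipulations — iterated products, conjugations, averaging, passage to quotients — are unconstrained, whereas here each such operation must be certified to remain within a neighborhood on which the partial product is defined and associative, and each quotient $G/(F_n,U)$ must itself be verified to be a locally compact, second countable local group so that Theorem~\ref{t:jacoby-without small subgroups} applies. Managing this domain bookkeeping uniformly — in particular keeping a single $U$ that simultaneously serves the entire sequence $F_n$ and the verifications $(F_n,U)\in\Delta G$ — is the delicate point, and it is presumably why the argument of Jacoby~\cite{jacoby} is spread over the whole run of Theorems~97--103 rather than compressed into one step.
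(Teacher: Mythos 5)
You should first note that the paper offers no proof of this statement at all: it is imported verbatim from Jacoby \cite[Theorems~97--103]{jacoby}, so the only fair comparison is with Jacoby's own argument. Measured against that, your overall architecture is the right one and is essentially what Jacoby does: localize the Gleason--Yamabe--Montgomery--Zippin scheme, i.e.\ manufacture inside a prescribed $U$ compact subgroups $F$ with $(F,U)\in\Delta G$ whose quotients have no small subgroups, invoke Theorem~\ref{t:jacoby-without small subgroups} to conclude each quotient is a local Lie group, run the construction along a neighborhood basis, and intersect to force monotonicity, with $\bigcap_n F_n=\{e\}$ then free. Your ancillary points are also sound: the quotient by $F_1\cap\dots\cap F_n$ remains without small subgroups because it locally injects continuously into the product of the $G/(F_i,U)$; each quotient must indeed be certified a locally compact second countable local group before Theorem~\ref{t:jacoby-without small subgroups} applies; and the domain bookkeeping inside $\Phi(G,6)$, $\Pi G$, $\Delta G$ is genuinely where much of Jacoby's labor goes. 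It is also worth observing that the local normality demanded by $\Delta G$ (conjugation only by elements of $U^2$) is weaker than global normality, which is why the local statement avoids the global counterexamples that force Yamabe to pass to an open compactly generated subgroup.

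Judged as a proof, however, there is a genuine gap, and it sits exactly at the sentence ``the commutator estimate forces this set to be a subgroup normalized by $U$, \dots\ while the escape estimate forces $G/(F,U)$ to have no small subgroups.'' First, everything analytic --- the local Haar-type averaging (whose existence on a local group is itself something to prove), the Gleason bump functions, and the two estimates --- is asserted rather than constructed, and that is the bulk of the seven theorems being cited; so the proposal is a program, not an argument. More seriously, the one deduction you do spell out does not follow from the two estimates you list. To make the non-escaping elements of $U$ into a subgroup one also needs an approximate product estimate of the form $\|a\cdot b\|\lesssim\|a\|+\|b\|$ for the escape norm, and --- the true obstruction --- the bump-function machinery that yields all such estimates cannot be run in the presence of small subgroups, which is precisely the regime you are in (were $G$ without small subgroups, Theorem~\ref{t:jacoby-without small subgroups} would already finish). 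Breaking this circularity is Yamabe's separate \emph{subgroup trapping} step: one shows that the subgroup generated by all subgroups contained in a sufficiently small $V$ can be trapped inside $U$, takes its closure as the candidate $F$, and then performs a further normalization argument (globally one intersects conjugates over a compact generating set; locally one must redo this with $U^2$-conjugates and re-verify both $(F,U)\in\Delta G$ and that the quotient is still without small subgroups). None of this appears in your sketch, and without it the passage from the escape and commutator estimates to the existence of the compact normal $F_n$ fails.
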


\begin{theorem}[{Jacoby~\cite[Theorem~107]{jacoby}}]
\label{t:jacoby-product}
  Any finite dimensional metrizable locally compact local group is
  locally isomorphic to the direct product of a Lie group and a
  compact zero-dimensional topological group.
\end{theorem}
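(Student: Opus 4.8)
The plan is to bootstrap from the approximation theorem (Theorem~\ref{t:jacoby-approximated}) and then run a dimension-stabilization argument, followed by a local splitting in the spirit of Montgomery--Zippin. First I would reduce to the second countable case: since the conclusion concerns only the germ of $G$ at $e$, I may replace $G$ by a relatively compact symmetric neighborhood of $e$, and a compact metrizable space is second countable, so Theorem~\ref{t:jacoby-approximated} applies. This yields $U\in\Psi G$ and a decreasing sequence of compact normal subgroups $F_n\subset U$ with $\bigcap_nF_n=\{e\}$, $(F_n,U)\in\Delta G$, and local Lie quotients $L_n:=G/(F_n,U)$.

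The core step is to show that the tower $\{L_n\}$ stabilizes in dimension. Because $F_{n+1}\subset F_n$, there are natural surjective local homomorphisms $L_{n+1}\to L_n$ with compact kernel $F_n/F_{n+1}$, so $\dim L_n\le\dim L_{n+1}$; and because each $L_n$ is the image of $G$ under an open map with compact fibers (the cosets of $F_n$), quotienting by a compact subgroup does not raise the covering dimension, giving $\dim L_n\le\dim G<\infty$. Hence the non-decreasing sequence $\dim L_n$ is eventually constant, say for $n\ge N$. For such $n$ the surjection of Lie groups $L_{n+1}\to L_n$ has $\dim L_{n+1}=\dim L_n$, so its compact kernel $F_n/F_{n+1}$ is zero-dimensional.

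I would then identify the zero-dimensional factor. Writing $F_N=\varprojlim_n F_N/F_n$, each $F_N/F_n$ is an iterated extension of the compact zero-dimensional groups $F_k/F_{k+1}$ with $N\le k<n$, and is therefore itself compact and zero-dimensional (profinite); passing to the inverse limit, $F_N$ is compact zero-dimensional. On the other hand $L:=L_N=G/(F_N,U)$ is a local Lie group, hence locally isomorphic to a genuine Lie group. It remains to split the local extension $F_N\hookrightarrow G\twoheadrightarrow L$ as a direct product near $e$.

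The main obstacle is this last splitting. I would produce a connected local Lie subgroup $S\subset G$ mapping isomorphically onto a neighborhood of the identity in $L$ with $S\cap F_N=\{e\}$; since $\dim S=\dim L=\dim G$ while $F_N$ is zero-dimensional and transverse, the product map $S\times F_N\to G$ is then a local homeomorphism onto a neighborhood of $e$. To upgrade this to a local isomorphism of local groups I must check that $S$ and $F_N$ commute, which follows from normality of $F_N$: conjugation gives a continuous action of $S$ on the profinite group $F_N$ by automorphisms, and for every characteristic open subgroup $K\lhd F_N$ the induced action on the finite discrete group $F_N/K$ is a continuous homomorphism from the connected local group $S$ into a finite group, hence trivial; thus $S$ acts trivially on $F_N=\varprojlim_K F_N/K$ and the product is direct. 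Constructing $S$ and verifying the transversality is the genuinely hard, Montgomery--Zippin-type part of the argument, where the no-small-subgroups criterion (Theorem~\ref{t:jacoby-without small subgroups}) and finite-dimensionality are used to extract the Lie direction, and I expect this to be the most delicate point.
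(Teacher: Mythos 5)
The paper offers no proof of this statement: it is imported verbatim as Jacoby's Theorem~107, so the benchmark is Jacoby's original argument (itself modeled on Montgomery--Zippin), which your outline does parallel in its first stages. The reduction to the second countable case, the appeal to Theorem~\ref{t:jacoby-approximated}, the dimension-stabilization of the tower $L_n=G/(F_n,U)$, and the identification of $F_N$ as compact zero-dimensional are all sound in outline. One justification you give is wrong as stated, though: ``quotienting by a compact subgroup does not raise the covering dimension'' is not a general-topology principle --- by Keldysh's classical example, even an \emph{open} map with zero-dimensional compact fibers can raise dimension. The inequality $\dim L_n\le\dim G$ is true here, but it needs the group-theoretic dimension additivity for quotients by compact (normal sub-local) groups, i.e.\ results of the Hurewicz--Wallman type in the group category, exactly the kind of statement this paper itself has to cite explicitly in the proof of Theorem~\ref{t:main-claim}; it cannot be waved through as a property of open maps with compact fibers.

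The genuine gap is the one you yourself flag: the construction of the connected local Lie subgroup $S$ transverse to $F_N$ and the proof that multiplication $S\times F_N\to G$ is a local isomorphism. Everything before that point is already contained in Theorem~\ref{t:jacoby-approximated}; the local splitting \emph{is} the content of Theorem~107, and your proposal leaves it as an acknowledged black box (``I would produce\dots'', ``I expect this to be the most delicate point''), so the statement is not actually proved. The standard route is via local cross-sections for quotients by compact zero-dimensional subgroups together with the no-small-subgroups property of the Lie quotient (Theorem~\ref{t:jacoby-without small subgroups}), and none of that is carried out. Separately, your commutation argument has a flaw: characteristic open subgroups need not form a neighborhood basis of the identity in a profinite group that is not topologically finitely generated --- in $(\Z/2)^{\N}$ the intersection of all open subgroups of index $2$ is trivial, hence not open --- so you cannot pass to the limit over characteristic $K$. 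This particular defect is easily repaired without any subgroup bookkeeping: for fixed $a\in F_N$ near $e$, the map $b\mapsto b'\cdot(a\cdot b)$ is a continuous map from the connected set $S$ into the totally disconnected space $F_N$ (it lands in $F_N$ because $(F_N,U)\in\Delta G$), so its image is a single point and the local conjugation action is trivial. But the repair of the commutation step does not touch the main gap: without an actual construction of $S$ and of the local isomorphism, the proposal is an accurate road map rather than a proof.
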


An immediate consequence of Theorem~\ref{t:jacoby-product} is that any 
locally euclidean local group is a local Lie group, which is the local
version of Hilbert 5th problem obtained by Jacoby.
 
All local groups appearing in this paper will be assumed, or
proved, to be locally compact and second countable.

\begin{defn}
A local group $G$ is a {\em local transformation group\/} on a
subspace $X\subset Y$ if there is given a continuous map $G\times X\to
Y$, written $(g,x)\mapsto gx$, such that
\begin{itemize}
\item $ex=x$ for all $x\in X$; and
\item $g_1(g_2 x) = (g_1g_2)x$, provided both sides are defined.
\end{itemize}
This map $G\times X\to Y$ is called a {\em local action\/} of $G$ on
$X\subset Y$.
\end{defn}

The typical example of local action is the following. Let $H$ be a
sub-local group of $G$. If $(H,U)\in \Pi G$ and $T:U^2\to
G/(H,U)$ is the natural projection, then $U$ is a sub-local group of $G$
and the map $(u,T(g)) \mapsto T(u\cdot g)$ defines a local action of $U$ on
the open subspace $T(U)$ of $G/(H,U)$.

If $G$ is a local group acting on $X\subset Y$ and the action is
locally transitive at $x\in X$ in the sense that there is a
neighborhood $V\in \Psi G$ such that $V x$ includes a neighborhood of
$x$ in $X$, then there is a sub-local group $H$ of $G$ and an open
subset $U\subset G$ such that $(H,U)\in \Pi G$ and the orbit map $g\in
G\mapsto gx\in X$ induces a local homeomorphism $G/(H,U)\to X$ at $x$,
which is equivariant with respect to the action of $U$.

\begin{theorem}\label{t:main-claim}
Let $G$ be a locally compact, separable and metrizable local
group. Suppose that there is a local action of $G$ on a finite
dimensional subspace $X\subset Y$ and that the action is locally
transitive at some $x\in X$.  Fix some $(H,U)\in \Pi G$ so that the
orbit map $g\mapsto gx$ induces a local homeomorphism $G/(H,U)\to X$
at $x$.  Then there exists a connected normal subgroup $K$ of $G$ such
that $K\subset H$, $(K,U)\in\Pi G$ and $G/(K,U)$ is finite
dimensional.
\end{theorem}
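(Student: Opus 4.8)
The plan is to take $K$ to be the identity component of the maximal normal sub-local group of $G$ contained in $H$, and to describe that maximal subgroup concretely as the local kernel of the action at $x$. First I would set $N$ to be the set of $g\in U$ that fix a whole neighborhood of $x$ in $X$. Since $H$ is, under the identification $G/(H,U)\to X$, the isotropy sub-local group at $x$ (because $gx=x$ iff $g\in H$), one checks that $N$ is a closed normal sub-local group of $G$ with $N\subset H$. Moreover $N$ is the \emph{largest} normal subgroup of $G$ contained in $H$: if $M$ is normal in $G$ and $M\subset H$, then $M$ fixes $x$, and for $g$ near $e$ the identity $gMg^{-1}=M\subset H$ shows that $M$ fixes $gx$; as $gx$ ranges over a neighborhood of $x$ by local transitivity, $M$ fixes a neighborhood of $x$, that is $M\subset N$. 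In particular every connected normal subgroup of $G$ lying in $H$ is contained in the identity component $N^0$, which is the candidate $K$.

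The crux is to prove that the effective quotient $\bar G=G/N$ is finite dimensional. By construction $\bar G$ acts effectively and locally transitively on the finite dimensional space $X$. I would argue that such a $\bar G$ has no small subgroups: a small subgroup would be contained in a compact subgroup acting effectively on $X$ with arbitrarily small orbits, which is impossible on a finite dimensional, locally connected space by the local Montgomery--Zippin/Newman phenomenon. Granting no small subgroups, the local solution of Hilbert's fifth problem (Theorem~\ref{t:jacoby-without small subgroups}) makes $\bar G$ a local Lie group, hence finite dimensional. A more self-contained route is to use the approximation Theorem~\ref{t:jacoby-approximated}: writing $G_n=G/(F_n,U)$ for the approximating local Lie groups and $H_n$ for the image of $H$, the quotient $G_n/H_n\cong G/(HF_n,U)$ is a quotient of $G/H\cong X$, so $\dim(G_n/H_n)\le\dim X$; the classical bound on the dimension of an effective transitive Lie transformation group of a manifold of bounded dimension then controls $\dim(G_n/R_n)$ uniformly in $n$, where $R_n$ is the largest connected normal subgroup of $G_n$ contained in $H_n$. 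The $R_n$ are compatible under the projections $G_{n+1}\to G_n$, and passing to the limit bounds $\dim\bar G$.

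It then remains to assemble $K=N^0$. It is connected by definition, normal in $G$ because the identity component is characteristic in the normal sub-local group $N$, and contained in $H$. To see that $G/K$ is finite dimensional, consider the tower $G/N^0\to G/N$ induced by $N^0\subset N$: its fibre $N/N^0$ is a totally disconnected locally compact group, hence zero dimensional, while the base $G/N=\bar G$ is finite dimensional by the previous step; the Hurewicz dimension inequality gives $\dim(G/N^0)\le\dim(G/N)<\infty$. Finally I would verify the technical condition $(K,U)\in\Pi G$, shrinking $U$ if necessary, by applying Jacoby's criteria for sub-local groups and their quotients (Theorems~26 and~29 of~\cite{jacoby}) to the closed connected normal sub-local group $K$.

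The step I expect to be the main obstacle is the finite dimensionality of the effective quotient $\bar G$, i.e.\ the no-small-subgroups property. This is where input genuinely beyond Jacoby's structure theory is required: one must adapt the Montgomery--Zippin/Newman argument, normally stated for global compact group actions on manifolds, to a local group acting locally transitively and effectively on a finite dimensional, merely locally connected, space.
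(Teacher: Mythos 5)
Your reduction to the finite dimensionality of the effective quotient $\bar G=G/N$ is the right diagnosis of where the difficulty lies, but both routes you propose for that step fail, and for concrete reasons. Route (a) cannot work because the theorem does not assume $X$ locally connected, and without that assumption the no-small-subgroups claim is simply false: take $G=\mathbb{Z}_p$ acting on $X=\mathbb{Z}_p$ by translations. This action is effective and locally transitive, $X$ is finite (indeed zero) dimensional, $N$ is trivial, yet $\bar G=\mathbb{Z}_p$ has small subgroups and is not a local Lie group. So finite dimensionality of $\bar G$ must be obtained \emph{without} Lie-ness; the chain ``no small subgroups $\Rightarrow$ local Lie (Theorem~\ref{t:jacoby-without small subgroups}) $\Rightarrow$ finite dimensional'' is unavailable in principle, not merely hard to adapt. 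Route (b) rests on a ``classical bound'' that does not exist for non-compact groups: for each $k$ the transformations $(x,y)\mapsto(x+a,\,y+p(x))$ of $\R^2$, with $p$ a polynomial of degree $\le k$, form a Lie group of dimension $k+2$ acting transitively and effectively on $\R^2$, so the dimension of an effective transitive Lie transformation group of a fixed-dimensional manifold is unbounded once the dimension is at least $2$. Hence $\dim(G_n/R_n)$ admits no uniform control of the kind your limiting argument needs. (The genuinely classical bound, $d(d+1)/2$ via an invariant metric, holds only for \emph{compact} groups, and that restriction is exactly what your plan is missing.)

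The paper's proof avoids both obstacles by never working with the full effective quotient. Using Jacoby's approximation (Theorem~\ref{t:jacoby-approximated}) it takes the compact normal subgroups $F_n$, lets $K_1$ be the identity component of $F_1$, and transfers the whole problem into the honest \emph{compact} group $K_1$: by the embedding claim (Claim~\ref{cl:embedding}), $K_1/(K_1\cap H)$ embeds in $G/(H,U)$, hence is finite dimensional and connected, and each $K_1/(K_1\cap F_n)$ is a compact subgroup of the local Lie group $G/(F_n,U)$, hence a Lie group. The Montgomery--Zippin-type statement (Claim~\ref{cl:main-claim}) is then applied with $A=K_1$, where compactness makes it valid, producing $n_0$ such that the identity component $K$ of $K_1\cap F_{n_0}$ lies in $H$; finite dimensionality of $G/(K,U)$ follows from the Hurewicz inequality $\dim G/(K,U)\le\dim G/(K_1,U)+\dim K_1/(K_1\cap F_{n_0})$. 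Note also that the resulting $K$ is a \emph{compact} connected normal subgroup, not your maximal candidate $N^0$ (which need not be compact and whose quotient you cannot control); the theorem only asks for existence, and the paper exploits that freedom. Your closing steps (zero-dimensionality of $N/N^0$ plus Hurewicz) would be fine if finite dimensionality of $G/N$ were in hand, but as it stands the central step is unproven and both proposed mechanisms for it are irreparable in the stated generality.
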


\begin{proof}
This is a local version of \cite[Theorem 6.2.2]{mz}, whose proof shows the
following assertion that will be used now.

\begin{claim}\label{cl:main-claim}
Let $A$ be a locally compact, separable and metrizable topological
group, and let $B$ be a closed subgroup of $A$ such that $A/B$ is of
finite dimension and connected. Let $N_n$ be a sequence of compact
normal subgroups so that $\bigcap_nN_n=\{e\}$ and every $A/N_n$ is a
Lie group. Then there is some index $n_0$ such that the connected
component of the identity of $N_{n_0}$ is contained in $B$.
\end{claim} 

The following observation is also needed.

\begin{claim}\label{cl:embedding}
Let $A$ be a local group, let $(B,V)\in\Pi A$, let $T:A\to A/(B,V)$
denote the natural projection, and let $C$ be a compact subgroup of
$A$ contained in $V^2\cap V^6$. Then $B\cap C$ is a compact subgroup
of $C$, a map $C/(B\cap C)\to A/(B,V)$ is well defined by the
assignment $a(B\cap C)\mapsto T(a)$, and this map is an embedding.
\end{claim}

This assertion can be proved as follows. On the one hand, $B\cap C$ is
compact because $B$ is closed and $C$ compact. On the other hand,
$B\cap C$ is a subgroup of $C$ because $C$ is a subgroup, $C\subset
V^6$, and $a\cdot b\in B$ and $a'\in B$ for all $a,b\in V^6$ since
$(B,V)\in\Pi A$. The map $C/(B\cap C)\to A/(B,V)$ is well defined and
injective because $C\subset V^2$ and $T(a)=T(b)$ if and only if
$a\cdot b'\in B$ for $a,b\in V^2$. This injection is continuous
because it is induced by the inclusion $C\hookrightarrow V^2$. Thus
this map is an embedding since $C/(B\cap C)$ is compact and $A/(B,V)$
is Hausdorff.

Now, with the notation of the statement of this theorem, let $F_n$ be a
sequence of compact normal subgroups of $G$ as provided by
 Jacoby's theorem~\cite{jacoby} (quoted as
 Theorem~\ref{t:jacoby-approximated}). 
 It may be assumed that $(F_n,U)\in \Delta G$ and $F_n\subset U^2\cap
 U^6$ for all $n$.  If $K_n$ is the identity component of each $F_n$, then the
 natural quotient map $G/(K_n,U)\to G/(F_n,U)$ has zero dimensional
 fibers, because they are locally homeomorphic to the zero-dimensional
 group $F_n/K_n$. Because each $G/(F_n,U)$ is a local Lie group, it is
 finite dimensional, and thus $G/(K_n,U)$ is also finite dimensional
 (see~\cite[Ch. VII, \S 4]{hurewicz}).

By Claim~\ref{cl:embedding}, $K_1\cap H$ is a compact subgroup of $K_1$, and
there is a canonical embedding $K_1/(K_1\cap H)\to G/(H,U)$. Moreover
$K_1/(K_1\cap H)$ is connected since so is $K_1$. Then the dimension of
$K_1/(K_1\cap H)$ is less or equal than the dimension of $G/(H,U)$ by
\cite[Theorem~III~1]{hurewicz}, and thus $K_1/(K_1\cap H)$ is of finite
dimension. On the other hand, each canonical embedding
$K_1/(K_1\cap F_n)\to G/(F_n,U)$, given by Claim~\ref{cl:embedding}, realizes
$K_1/(K_1\cap F_n)$ as a compact subgroup of the local Lie group $G/(F_n,U)$
because $K_1\cap F_n$ is a normal subgroup of $K_1$. So every 
$K_1/(K_1\cap F_n)$ is a Lie group. Then, by Claim~\ref{cl:main-claim} with
$A=K_1$, $B=K_1\cap H$ and $N_n=K_1\cap F_n$, there is some index $n_0$ such
that the identity component $K$ of $F=K_1\cap F_{n_0}$ is contained in
$K_1\cap H$. This $F$ is a normal subgroup of $G$, and thus $K$ is a
connected normal subgroup of $G$. Furthermore
$(K,U),(F,U)\in\Delta G$, and
$$
\dim G/(K,U)=\dim G/(F,U)\le\dim G/(K_1,U)+\dim K_1/(K_1\cap F_{n_0})
$$
by \cite[Theorem~III~4]{hurewicz}. So
$G/(K,U)$ is of finite dimension as desired.
\end{proof}

\section{Equicontinuous pseudogroups}

A {\em pseudogroup of local transformations\/} of a topological space
$Z$ is a collection $\HH$ of homeomorphisms between open subsets of
$Z$ that contains the identity on $Z$ and is closed under composition
(wherever defined), inversion, restriction and combination of maps.
Such a pseudogroup $\HH$ is {\em generated\/} by a set $E\subset\HH$
if every element of $\HH$ can be obtained from $E$ by using the above
pseudogroup operations; the sets of generators will be assumed to be
symmetric for simplicity ($h^{-1}\in E$ if $h\in E$).  The {\em
  orbit\/} of an element $x\in Z$ is the set $\HH(x)$ of elements
$h(x)$, for all $h\in\HH$ whose domain contains $x$. These orbits are
the equivalence classes of an equivalence relation on $Z$.

Pseudogroups of local transformations are natural generalizations of
group actions on topological spaces (each group action generates a
pseudogroup). Another important example of a different nature is the
holonomy pseudogroup of a foliated space defined by a regular covering
by flow boxes \cite{CC2K,Haefliger85,Haefliger88,Hector-Hirsch}.

The study of pseudogroups can be simplified by using certain
equivalence relation introduced by Haefliger
\cite{Haefliger85,Haefliger88}. For instance, any pseudogroup of local
transformations is equivalent to its restriction to any open subset
that meets all orbits; indeed, the whole of this equivalence relation
is generated by this very basic type of examples. This concept of
pseudogroup equivalence is very important in the study of foliated
spaces because the equivalence class of the holonomy pseudogroup
depends only on each foliated space; it is independent of the choice
of a regular covering by flow boxes.

For a pseudogroup $\HH$ of local transformations of a locally compact
space $Z$, Haefliger introduced also the concept of compact
generation: $\HH$ is {\em compactly generated\/} if there is a
relatively compact open set $U$ in $Z$ meeting each orbit of $\HH$,
and such that the restriction $\GG$ of $\HH$ to $U$ is generated by a
finite symmetric collection $E\subset\GG$ so that each $g\in E$ is the
restriction of an element $\bar g$ of $\HH$ defined on some
neighborhood of the closure of the source of $g$.  This notion is
invariant by equivalences and the relatively compact open set $U$
meeting each orbit can be chosen arbitrarily. If $E$ satisfies the
above conditions, it is called a {\em system of compact generation\/}
of $\HH$ on $U$.

The concept of strong and weak equicontinuity was introduced in
\cite{equicont} for pseudogroups of local transformations of spaces
whose topology is induced by the following type of structure. Let
$\{(Z_i,d_i)\}_{i\in I}$ be a family of metric spaces such that
$\{Z_i\}_{i\in I}$ is a covering of a set $Z$, each intersection
$Z_i\cap Z_j$ is open in $(Z_i,d_i)$ and $(Z_j,d_j)$, and for all
$\epsilon>0$ there is some $\delta(\epsilon)>0$ so that the following
property holds: for all $i,j\in I$ and $z\in Z_i\cap Z_j$, there is
some open neighborhood $U_{i,j,z}$ of $z$ in $Z_i\cap Z_j$ (with
respect to the topology induced by $d_i$ and $d_j$) such that $$
d_i(x,y)<\delta(\epsilon)\Longrightarrow d_j(x,y)<\epsilon $$ for all
$\epsilon>0$ and all $x,y\in U_{i,j,z}$. Such a family is called a
{\em cover of $Z$ by quasi-locally equal metric spaces\/}.  Two such
families are called {\em quasi-locally equal\/} when their union also
is a cover of $Z$ by quasi-locally equal metric spaces.  This is an
equivalence relation whose equivalence classes are called {\em
  quasi-local metrics\/} on $Z$. For each quasi-local metric
${\mathfrak Q}$ on $Z$, the pair $(Z,{\mathfrak Q})$ is called a {\em
  quasi-local metric space\/}. Such a $\mathfrak Q$ induces a topology
on $Z$ so that, for each $\{(Z_i,d_i)\}_{i\in I}\in{\mathfrak Q}$, the
family of open balls of all metric spaces $(Z_i,d_i)$ form a base of
open sets. Any topological concept or property of $(Z,{\mathfrak Q})$
refers to this underlying topology. It was also observed in
\cite{equicont} that $(Z,{\mathfrak Q})$ is a locally compact Polish
space if and only if it is hausdorff, paracompact, separable and
locally compact.

The strongest version of equicontinuity was defined in \cite{equicont}
as follows. Let $\HH$ be a pseudogroup of local homeomorphisms of a
quasi-local metric space $(Z,{\mathfrak Q})$. Then $\HH$ is called
{\em strongly equicontinuous\/} if there exists some
$\{(Z_i,d_i)\}_{i\in I}\in{\mathfrak Q}$ and some symmetric set $S$ of
generators of $\HH$ that is closed under compositions such that, for
every $\epsilon>0$, there is some $\delta(\epsilon)>0$ so that $$
d_i(x,y)<\delta(\epsilon)\Longrightarrow d_j(h(x),h(y))<\epsilon $$
for all $h\in S$, $i,j\in I$ and $x,y\in Z_i\cap h^{-1}(Z_j\cap\im
h)$.

The condition on $S$ to be closed under compositions is precisely what
distinguishes strong and weak equicontinuity
\cite[Lemma~8.3]{equicont}. A typical choice of $S$ is the set of all
possible composites of some symmetric set of generators. In fact,
given any $S$ satisfying the condition of strong equicontinuity, it is
obviously possible to find a symmetric set of generators $E$ given by
restrictions of elements of $S$, and then the set of all composites of
elements of $E$ also satisfies the condition of strong equicontinuity.

A pseudogroup $\HH$ acting on a space $Z$ will be called {\em strongly
  equicontinuous\/} when it is strongly equicontinuous with respect to
some quasi-local metric inducing the topology of $Z$. This notion is
invariant by equivalences of pseudogroups acting on locally compact
Polish spaces \cite[Lemma~8.8]{equicont}.

A key property of strong equicontinuity is the following.

\begin{prop}[{\cite[Proposition~8.9]{equicont}}]\label{p:equicontinuous}
  Let $\HH$ be a compactly generated and strongly equicontinuous
  pseudogroup acting on a locally compact Polish quasi-local metric
  space $(Z,{\mathfrak Q})$, and let $U$ be any relatively compact
  open subset of $(Z,{\mathfrak Q})$ that meets every $\HH$-orbit.
  Suppose that $\{(Z_i,d_i)\}_{i\in I}\in{\mathfrak Q}$ satisfies the
  condition of strong equicontinuity. Let $E$ be any system of compact
  generation of $\HH$ on $U$, and let $\bar g$ be an extension of each
  $g\in E$ with $\overline{\dom g}\subset\dom\bar g$. Also, let
  $\{Z'_i\}_{i\in I}$ be any shrinking of $\{Z_i\}_{i\in I}$. Then
  there is a finite family $\VV$ of open subsets of $(Z,{\mathfrak
    Q})$ whose union contains $U$ and such that, for any $V\in\VV$,
  $x\in U\cap V$, and $h\in\HH$ with $x\in\dom h$ and $h(x)\in U$, the
  domain of $\tilde h=\bar g_n\circ\dots\circ\bar g_1$ contains $V$
  for any composite $h=g_n\circ\dots\circ g_1$ defined around $x$ with
  $g_1,\dots,g_n\in E$, and moreover $V\subset Z'_{i_0}$ and $\tilde
  h(V)\subset Z'_{i_1}$ for some $i_0,i_1\in I$.
\end{prop}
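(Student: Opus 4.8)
The plan is to convert the qualitative hypotheses into two uniform constants and then run a domain-continuation induction along each composite word. \emph{Strong equicontinuity} supplies, for the fixed family $\{(Z_i,d_i)\}$, a modulus $\delta(\epsilon)$ that controls \emph{every} composite in the distinguished generating set at once, with no dependence on word length; \emph{compact generation} (finiteness of $E$) together with relative compactness of $U$ supplies a single \emph{extension radius} $\rho>0$ such that each $\bar g$ is defined on the $\rho$-neighborhood of $\overline{\dom g}$. The finite family $\VV$ will then be a finite cover of $\ol U$ by balls of a radius $r$ chosen in terms of $\delta$ and $\rho$.

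First I would produce $\rho$. For each of the finitely many $g\in E$ the set $\overline{\dom g}$ is compact and contained in the open set $\dom\bar g$, so its distance (measured in the relevant $d_i$) to the complement of $\dom\bar g$ is positive; taking the minimum over $E$ gives $\rho>0$ with the $\rho$-neighborhood of each $\overline{\dom g}$ inside $\dom\bar g$. Next I would fix the modulus: by strong equicontinuity for $\{(Z_i,d_i)\}$ there is $\delta=\delta(\rho/2)>0$ such that any composite $g_k\o\cdots\o g_1$ of elements of $E$ moves points that are $d_i$-closer than $\delta$ to points that are $d_j$-closer than $\rho/2$, uniformly in $k$ and in the generators used.

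The heart is the inductive estimate. Fix $x\in U$ and a word $h=g_n\o\cdots\o g_1$ defined around $x$, and put $x_k=(g_k\o\cdots\o g_1)(x)$. For $r$ at most $\delta$ and $\rho/2$ I would show, by induction on $k$, that $\bar g_k\o\cdots\o\bar g_1$ is defined on the ball $B(x,r)$ and carries it into the $\rho/2$-ball about $x_k$; since $x_k\in\dom g_{k+1}$, that $\rho/2$-ball lies inside $\dom\bar g_{k+1}$, so the next factor is defined on the current image and (applying the modulus to the partial composite, which fixes $x_k$ because the extensions agree with the originals at $x$) the image stays inside the $\rho/2$-ball about $x_{k+1}$, continuing the induction. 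Covering the compact set $\ol U$ by finitely many balls $B(x_\alpha,r/2)$ produces $\VV$; shrinking $r$ below the Lebesgue number of the shrinking $\{Z'_i\}$ forces each $V$ into a single $Z'_{i_0}$, and the $\rho/2$-image bound together with the same Lebesgue estimate forces $\tilde h(V)\subset Z'_{i_1}$.

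The main obstacle is that strong equicontinuity is stated for a generating set $S$ of $\HH$ closed under composition, whereas the induction applies the \emph{extensions} $\bar g_{k+1}$, which may leave $\dom g_{k+1}$ and enter the region where $\bar g_{k+1}\ne g_{k+1}$; there the bare modulus $\delta$ need not control them. I would resolve this before running the induction: since each $\bar g\in\HH$ and $S$ generates $\HH$, every point of the compact set $\overline{\dom\bar g}$ has a neighborhood on which $\bar g$ coincides with an element of $S$, so finitely many such elements cover $\overline{\dom\bar g}$; using the cited stability of strong equicontinuity under passing to composites of restrictions, I would enlarge $S$ (keeping the same family $\{(Z_i,d_i)\}$ and adjusting $\delta$) to a composition-closed set controlling all the $\bar g$, hence all their composites uniformly in length. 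With that reduction the estimate above is exactly a matter of chaining the single modulus $\delta$. An alternative, should the patching prove delicate, is a contradiction argument: if no uniform $(r,\VV)$ existed one would extract, using that $(Z,{\mathfrak Q})$ is Polish and locally compact, a bad sequence $x_m\to x_\infty$ with words $h_m$ whose extensions fail on $B(x_m,1/m)$, and the length-independence of $\delta$ would again be the feature forcing the contradiction.
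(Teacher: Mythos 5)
This proposition is stated in the paper without proof---it is imported verbatim from \cite[Proposition~8.9]{equicont}---so there is no in-paper argument to compare yours against; judged on its own terms, your proposal is sound and is essentially the standard argument behind the cited result. The ingredients are the right ones: a uniform extension radius $\rho$ from the finiteness of $E$ and compactness of the sets $\overline{\dom g}$, a word-length-independent modulus $\delta$ coming from the composition-closedness of $S$, the induction keeping each partial image inside $B(x_k,\rho/2)\subset\dom\bar g_{k+1}$ (using that $x_k\in\dom g_{k+1}$ because the composite is defined around $x$, and that $\tilde h_k(x)=x_k$), and the Lebesgue-number finish for the shrinking $\{Z'_i\}$. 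You also correctly isolate the one genuine subtlety, namely that the extensions $\bar g$ need not be controlled by $\delta$ off $\dom g$. Your fix is the right one but still needs a small uniformization to close: after covering each compact $\overline{\dom\bar g}$ by finitely many open sets on which $\bar g$ agrees with an element of $S$ (legitimate, since $S$ is closed under composition and every element of $\HH$ locally agrees with a restriction of an element of $S$, and restrictions obey the same modulus), you must also shrink $\rho$ below a Lebesgue number of that finite cover, so that each ball $B(x_k,\rho/2)$ lies in a \emph{single} such set. Only then does the partial composite $\tilde h_{k+1}$ restricted to $B(x,r)$ agree with one element of the enlarged composition-closed family, which matters because the equicontinuity estimate for $h\in S$ pairs points lying in the domain of one and the same map---local agreement alone would not let you apply $\delta$ across the whole ball. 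A second, minor gloss: in a quasi-local metric space the balls and distances are chart-dependent, so the choice of $\rho$, the triangle inequality giving $V=B(x_\alpha,r/2)\subset B(x,r)$, and the Lebesgue-number arguments must all be run inside single charts $Z_i$, with the $\delta(\epsilon)$ of the quasi-local structure mediating chart changes; this is routine given compactness of $\ol U$ but should be said. With those two points made explicit, your induction closes and yields the statement, including $V\subset Z'_{i_0}$ and $\tilde h(V)\subset B(h(x),\rho/2)\subset Z'_{i_1}$.
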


The following terminology was introduced in \cite{equicont} to study
strongly equicontinuous pseudogroups. A pseudogroup $\HH$ of local
transformations of a space $Z$ is said to be \textit{quasi-effective}
if it is generated by some symmetric set $S$ that is closed under
compositions, and such that any transformation in $S$ is the identity
on its domain if it is the identity on some non-empty open subset of
its domain. The family $S$ can be assumed to be also closed under
restrictions to open sets, and thus every map in $\HH$ is a
combination of maps in $S$ in this case. Moreover, if $\HH$ is
strongly equicontinuous and quasi-effective, then $S$ can be chosen to
satisfy the conditions of both strong equicontinuity and
quasi-effectiveness. The notion of quasi-effectiveness is invariant by
equivalences of pseudogroups acting on locally compact Polish spaces
\cite[Lemma~9.5]{equicont}. Moreover this property is equivalent to
quasi-analyticity for pseudogroups acting on locally connected and
locally compact Polish spaces \cite[Lemma~9.6]{equicont}; recall that
a pseudogroup $\HH$ is called \textit{quasi-analytic} if every $h\in
\HH$ is the identity around some $x\in \dom h$ whenever $h$ is the
identity on some open set whose closure contains $x$
\cite{Haefliger85}.

\begin{prop}[{\cite[Proposition 9.9]{equicont}}]\label{p:A,B} Let
  $\HH$ be a compactly generated, strongly equicontinuous and
  quasi-effective pseudogroup of local homeomorphisms of a locally
  compact Polish space $Z$. Suppose that the conditions of strong
  equicontinuity and quasi-effectiveness are satisfied with a
  symmetric set $S$ of generators of $\HH$ that is closed under
  compositions.  Let $A,B$ be open subsets of $Z$ such that
  $\overline{A}$ is compact and contained in $B$. If $x$ and $y$ are
  close enough points in $Z$, then
  $$ f(x)\in A\Longrightarrow f(y)\in B $$ for all $f\in S$ whose domain
  contains $x$ and $y$.
\end{prop}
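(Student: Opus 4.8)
The plan is to produce, from the compactness of $\overline A$ inside the open set $B$, a \emph{uniform gap} measured in the chart metrics, and then to transport the closeness of $x$ and $y$ to closeness of $f(x)$ and $f(y)$ by strong equicontinuity, concluding by a triangle inequality. Fix $\{(Z_i,d_i)\}_{i\in I}\in\mathfrak{Q}$ and the symmetric, composition‑closed $S$ as in the statement, so that the strong equicontinuity estimate $d_i(x,y)<\delta(\epsilon)\Rightarrow d_j(h(x),h(y))<\epsilon$ holds for all $h\in S$. Since the $d_i$-balls form a base of the topology (so each $Z_i$ is open in $Z$), and $\overline A$ is compact and contained in the open set $B$, for each $w\in\overline A$ I can choose $i$ with $w\in Z_i$ and $r>0$ so that $B_{d_i}(w,2r)\subset B$; note that such a ball is automatically a subset of $Z_i$. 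Extracting a finite subcover gives $\overline A\subset\bigcup_{k=1}^m B_{d_{i_k}}(w_k,r_k)$ with $B_{d_{i_k}}(w_k,2r_k)\subset B$, and I set $r=\min_k r_k>0$. This is the uniform gap: any point lying within $d_{i_k}$-distance $r$ of a point of $A\cap B_{d_{i_k}}(w_k,r_k)$ already lies in $B$.

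Next I fix $\delta=\delta(r)$ supplied by strong equicontinuity for $\epsilon=r$, and interpret ``$x,y$ close enough'' to mean that $x,y$ lie in a common chart $Z_i$ with $d_i(x,y)<\delta$ (a genuine neighborhood condition, since the $d_i$-balls form a base). Suppose $f\in S$, $x,y\in Z_i\cap\dom f$ with $d_i(x,y)<\delta$, and $f(x)\in A$. Then $f(x)\in B_{d_{i_k}}(w_k,r_k)$ for some $k$, so with $j=i_k$ one has $f(x)\in Z_j$ and $d_j(w_k,f(x))<r_k$. \emph{Provided} $f(y)\in Z_j$ as well, the points $x,y$ lie in $Z_i\cap f^{-1}(Z_j\cap\im f)$, so the strong equicontinuity estimate gives $d_j(f(x),f(y))<r\le r_k$, whence $d_j(w_k,f(y))\le d_j(w_k,f(x))+d_j(f(x),f(y))<2r_k$ and $f(y)\in B_{d_{i_k}}(w_k,2r_k)\subset B$, as required.

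The one point needing real care, and the step I expect to be the main obstacle, is the parenthetical hypothesis $f(y)\in Z_j$: the strong equicontinuity condition as formulated presupposes that \emph{both} images already lie in a common chart, yet the near‑ness of $f(y)$ to $f(x)$ that would place it there is exactly what the estimate is meant to deliver. To break this chicken‑and‑egg I would first upgrade the chart‑conditional estimate to an unconditional one: for every $\epsilon>0$ there is $\delta>0$ such that for all $f\in S$ and all $x,y$ in a common chart with $d_i(x,y)<\delta$, the images $f(x),f(y)$ automatically lie in a single chart $Z_j$ with $d_j(f(x),f(y))<\epsilon$. This upgrade is where the quasi-local equality of the metrics in $\mathfrak{Q}$ and the compactness of the relevant region enter, in the same spirit as Proposition~\ref{p:equicontinuous}, which already furnishes uniform‑over‑$S$ control both of domains and of the charts containing the images; shrinking $\delta$ below the Lebesgue‑type threshold produced there forces $f(y)$ into the chart of $f(x)$. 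Taking $\delta$ to be the minimum of this threshold and $\delta(r)$ above, the computation of the second paragraph applies verbatim and finishes the proof; everything apart from this chart‑matching upgrade is routine triangle‑inequality bookkeeping.
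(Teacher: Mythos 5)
The paper you were asked to match contains no proof of this proposition: it is imported verbatim from \cite{equicont} (Proposition~9.9 there), so the only comparison possible is with your argument on its own merits. Your outer skeleton is correct and is surely part of any proof: cover the compact $\overline A$ by finitely many balls $B_{d_{i_k}}(w_k,r_k)$ with doubled balls inside $B$, set $r=\min_k r_k$, take $\delta(r)$ from strong equicontinuity, and close with the triangle inequality in the chart $Z_{i_k}$ containing $f(x)$. You also correctly identify the crux: the equicontinuity estimate is conditional on $f(y)$ already lying in the chart $Z_j$ of $f(x)$, and nothing in the definition supplies that.

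The problem is that your resolution of this crux is an assertion, not a proof, and as stated it does not go through. First, Proposition~\ref{p:equicontinuous} does not furnish the ``unconditional upgrade'' you invoke: it applies only to points $x\in U\cap V$ with $h(x)\in U$ for a fixed relatively compact $U$, and only to maps $h$ that are composites of a system of compact generation $E$, with the conclusion bearing on the \emph{extension} $\tilde h$ built from the fixed extensions $\bar g$ --- whereas in the present statement $x$ ranges over all of $Z$ (only $f(x)\in A$ is constrained) and $f\in S$ is arbitrary. Second, and more decisively, even where Proposition~\ref{p:equicontinuous} applies it controls $\tilde f(y)$, not $f(y)$: since $Z$ is not assumed locally connected and $\dom f$ may be disconnected, the identity $\tilde f(y)=f(y)$ is not automatic, and establishing it is exactly where quasi-effectiveness must enter (uniqueness of extensions within $S$, i.e.\ injectivity of the restriction maps $\rho^V_W$ on $S\cap C(V,Z)$, whose bijectivity for small $V,W$ the paper derives from Proposition~\ref{p:equicontinuous}). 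Your proposal never uses quasi-effectiveness at all, although it is a hypothesis of the proposition; note that with a single global metric (one chart) the statement follows from equicontinuity alone, so quasi-effectiveness is needed precisely for the multi-chart matching your ``upgrade'' is supposed to handle. Dismissing that step as being ``in the same spirit'' of Proposition~\ref{p:equicontinuous} therefore leaves the entire nontrivial content of the proposition unproved.
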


Recall that a pseudogroup is called \textit{transitive} when it has a
dense orbit, and is called \textit{minimal} when all of its orbits are
dense.

\begin{theorem}[{\cite[Theorem~11.1]{equicont}}]\label{t:minimal} 
  Let $\HH$ be a compactly generated and strongly equicontinuous
  pseudogroup of local transformations of a locally compact Polish
  space $Z$. If $\HH$ is transitive, then $\HH$ is minimal.
\end{theorem}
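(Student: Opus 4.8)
The plan is to reduce everything to the relatively compact open set $U$ coming from compact generation and then, using strong equicontinuity, to transport a dense orbit into an arbitrary nonempty open set; the role of Proposition~\ref{p:equicontinuous} is to keep the transporting maps defined on a fixed neighbourhood so that the equicontinuity estimate may be applied at a point that is only \emph{near} the orbit rather than on it.

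Since $U$ meets every $\HH$-orbit, transitivity and minimality of $\HH$ on $Z$ are equivalent to the corresponding properties of the restriction $\GG=\HH|_U$, an equivalent pseudogroup; so it suffices to show that every $\GG$-orbit is dense in $U$. Fix a point $x_0\in U$ whose $\GG$-orbit is dense, and fix the data entering Proposition~\ref{p:equicontinuous}: a family $\{(Z_i,d_i)\}_{i\in I}\in{\mathfrak Q}$ and a symmetric, composition-closed generating set $S$ witnessing strong equicontinuity, a system of compact generation $E$ whose members are restrictions of elements of $S$, extensions $\bar g\supset g$ with $\overline{\dom g}\subset\dom\bar g$, a shrinking $\{Z'_i\}_{i\in I}$, and the resulting finite open cover $\VV$ of $U$ with its uniform-extension property.

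Now fix an arbitrary $y\in U$ (we may assume $y\in U$ after replacing it by a point of $U\cap\GG(y)$) and an arbitrary nonempty open $W\subset U$; the goal is to produce $h\in\GG$ with $y\in\dom h$ and $h(y)\in W$. Choose an orbit point $p\in W\cap\GG(x_0)$ together with $\epsilon>0$ so that the $\epsilon$-ball about $p$ lies in $W$, and let $\delta=\delta(\epsilon)$ be the corresponding equicontinuity modulus. Pick $V\in\VV$ with $y\in V$, so $V\subset Z'_{i_0}$; shrinking $V$ so that $\diam_{d_{i_0}}V<\delta$, use density of $\GG(x_0)$ to select an orbit point $x\in V$ with $d_{i_0}(x,y)<\delta$. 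Since $p\in\GG(x_0)=\GG(x)$, there is a composite $h=g_n\circ\dots\circ g_1$ with $g_k\in E$, defined around $x$, such that $h(x)=p\in U$. Proposition~\ref{p:equicontinuous} then extends it to $\tilde h=\bar g_n\circ\dots\circ\bar g_1$, defined on all of $V$, with $\tilde h(V)\subset Z'_{i_1}$; in particular $\tilde h$ is defined at $y$ and $\tilde h(x)=p$.

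It remains to conclude that $\tilde h(y)\in W$. By strong equicontinuity applied to $\tilde h\in S$ — where one arranges the finitely many extensions $\bar g$ to lie in $S$, e.g. by taking $S$ to be the composition closure of a symmetric generating set containing them — and using $d_{i_0}(x,y)<\delta$ together with $V\subset Z'_{i_0}$ and $\tilde h(V)\subset Z'_{i_1}$, we obtain $d_{i_1}(\tilde h(x),\tilde h(y))=d_{i_1}(p,\tilde h(y))<\epsilon$, so $\tilde h(y)$ lies in the $\epsilon$-ball about $p$ and hence in $W$; thus $\tilde h(y)\in\GG(y)\cap W$. Since $y$ and $W$ were arbitrary, every $\GG$-orbit is dense, which gives minimality. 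The main obstacle is precisely the reconciliation of the metric $d_{i_1}$ appearing at the image with the ambient topology near $p$: the index $i_1$ depends on $\tilde h$, so the estimate $d_{i_1}(p,\tilde h(y))<\epsilon$ must be converted into genuine membership in $W$ \emph{uniformly} in $\tilde h$. This is exactly what the quasi-local metric structure is designed to supply — the metrics $d_i$ and $d_j$ are comparable near common points and $\VV$ is finite — and establishing this uniform comparability, rather than the transport idea itself, is the technical heart of the argument.
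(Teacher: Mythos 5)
First, note that this paper does not actually prove Theorem~\ref{t:minimal}: it is imported verbatim from \cite[Theorem~11.1]{equicont}, so the comparison is really against the argument developed there. Your transport scheme --- restrict to $U$, fix a dense orbit, pick $p\in W$ on that orbit, pick an orbit point $x$ within $\delta(\epsilon)$ of $y$, move $x$ to $p$ by a composite of generators, and use Proposition~\ref{p:equicontinuous} to extend the composite over a fixed $V\in\VV$ containing both $x$ and $y$ --- is indeed the right idea and is essentially how such statements are proved in \cite{equicont}. Two small points are fixable: you cannot ``take $S$ to be the composition closure of a symmetric generating set containing'' the extensions $\bar g$, since strong equicontinuity is an existence statement about $S$ and adding arbitrary maps to it need not preserve the estimate; the legitimate arrangement, spelled out in Section~2 of this paper, is the reverse one --- take $E$ to consist of \emph{restrictions} of elements of a given good $S$, so that $\tilde h=\bar g_n\circ\dots\circ\bar g_1$ is a restriction of a single element of the composition-closed $S$ and the estimate applies to the pair $x,y$. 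Also, ``shrinking $V$ so that $\diam_{d_{i_0}}V<\delta$'' is both unnecessary (you only need $d_{i_0}(x,y)<\delta$) and illegitimate (the conclusion of Proposition~\ref{p:equicontinuous} is stated for the members of the fixed finite family $\VV$, not for arbitrary shrinkings of them).

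The genuine gap is the one you yourself flag in the last sentences and then dispose of with a wave: converting $d_{i_1}(p,\tilde h(y))<\epsilon$ into $\tilde h(y)\in W$. Your stated reasons --- ``the metrics $d_i$ and $d_j$ are comparable near common points and $\VV$ is finite'' --- do not close it. Finiteness of $\VV$ controls only the \emph{domain}-side index $i_0$; the image index $i_1$ is whatever Proposition~\ref{p:equicontinuous} hands you for the particular $\tilde h$, and as $x$ (hence $h$, hence $\tilde h$) varies it ranges over the possibly infinite index set $I$. The definition of a quasi-local metric gives a modulus $\delta(\epsilon)$ that is uniform in $(i,j,z)$, but the comparison $d_i<\delta\Rightarrow d_j<\epsilon$ is only valid on neighborhoods $U_{i,j,z}$ that are \emph{not} uniform in $i$; to know $\tilde h(y)$ lies in $U_{i_1,j_0,p}$ (where $d_{j_0}$ is a fixed metric whose ball about $p$ sits in $W$) you would need a $d_{i_1}$-radius guaranteed independently of $i_1$ --- which is exactly the statement being proved. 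Since $\epsilon$ and $\delta$ must be fixed \emph{before} $x$, $h$ and $i_1$ are chosen, the circularity is real. Breaking it requires an additional uniformity lemma (roughly: on the compact set $\ol U$, with respect to the shrinking $\{Z'_i\}$, there is a single radius $r>0$ such that comparability between $d_i$ and $d_j$ holds on $d_i$-balls of radius $r$ centered in $Z'_i$), which is part of the quasi-local-metric machinery developed in \cite{equicont} and is precisely what its proof of Theorem~11.1 relies on; note that the shortcut via Proposition~\ref{p:A,B} is unavailable here, since that proposition assumes quasi-effectiveness, which Theorem~\ref{t:minimal} does not. As written, your proof defers its hardest step to an unproven claim supported by an argument ($\VV$ finite) that addresses the wrong index.
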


For spaces $Y,Z$, let $C(Y,Z)$ denote the family of continuous maps
$Y\to Z$, which will be denoted by $C_{\text{\rm c-o}}(Y,Z)$ when it
is endowed with the compact-open topology.  For open subspaces $O,P$
of a space $Z$, the space $C_{\text{\rm c-o}}(O,P)$ will be considered
as an open subspace of $C_{\text{\rm c-o}}(O,Z)$ in the canonical way.

\begin{theorem}[{\cite[Theorem~12.1]{equicont}}]\label{t:closure}
  Let $\HH$ be a quasi-effective, compactly generated and strongly
  equicontinuous pseudogroup of local transformations of a locally
  compact Polish space $Z$. Let $S$ be a symmetric set of generators
  of $\HH$ that is closed under compositions and restrictions to open
  subsets, and satisfies the conditions of strong equicontinuity and
  quasi-effectiveness. Let $\widetilde{\HH}$ be the set of maps $h$
  between open subsets of $Z$ that satisfy the following property: for
  every $x\in\dom h$, there exists a neighborhood $O_x$ of $x$ in
  $\dom h$ so that the restriction $h|_{O_x}$ is in the closure of
  $C(O_x,Z)\cap S$ in $C_{\text{\rm c-o}}(O_x,Z)$. Then:
\begin{itemize}
  
\item $\widetilde{\HH}$ is closed under composition, combination and
  restriction to open sets;
  
\item every map in $\widetilde{\HH}$ is a homeomorphism around every
  point of its domain;
  
\item the maps of $\widetilde{\HH}$ that are homeomorphisms form a
  pseudogroup $\overline{\HH}$ that contains $\HH$;

\item $\overline{\HH}$ is strongly equicontinuous;
  
\item the orbits of $\overline{\HH}$ are equal to the closures of the
  orbits of $\HH$; and
  
\item $\widetilde{\HH}$ and $\overline{\HH}$ are independent of the
  choice of $S$.

\end{itemize}
\end{theorem}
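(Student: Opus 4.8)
The plan is to realize every element of $\widetilde{\HH}$ as a local uniform limit of maps from $S$ and to extract the control needed at each step from strong equicontinuity together with the uniform-domain statement of Proposition~\ref{p:equicontinuous}. Fix a family $\{(Z_i,d_i)\}_{i\in I}$ realizing strong equicontinuity with modulus $\delta(\epsilon)$; since $S$ is symmetric, the inverses of maps in $S$ obey the same inequality with the same modulus, and this bi-equicontinuity is the workhorse of the argument. I would first prove the second bullet. Near $x$ write $h|_{O_x}=\lim_n s_n$ uniformly, with $s_n\in S$ and $O_x$ contained in a single $Z_{i_0}$. Bi-equicontinuity forces $h$ to be injective on $O_x$: if $h(a)=h(b)$ then $d_{i_1}(s_n(a),s_n(b))\to0$, while the contrapositive of the equicontinuity of $s_n^{-1}$ gives $d_{i_1}(s_n(a),s_n(b))\ge\delta(d_{i_0}(a,b))$, so $d_{i_0}(a,b)=0$. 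The same inequality, applied with $u=s_n(x)$, shows $s_n(B(x,r))\supset B(s_n(x),\delta(r))$, a ball of uniform radius, and passing to the limit gives that $h$ is open at $x$, with inverse the limit of the $s_n^{-1}$; the domain bookkeeping for the $s_n^{-1}$ is the only delicate point here. Hence each $h\in\widetilde{\HH}$ is a homeomorphism around every point. Closure under restriction is immediate from the definition, and closure under combination is equally immediate because the defining condition is purely local; quasi-effectiveness of $S$ guarantees unique continuation for these local limits, which keeps $\widetilde{\HH}$ germ-determined.

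For composition, given $h,k\in\widetilde{\HH}$ and $x\in\dom(h\o k)$, write $k=\lim_n s_n$ near $x$ and $h=\lim_m t_m$ near $k(x)$. Since $S$ is closed under composition, each $t_m\o s_n$ lies in $S$; uniform convergence $s_n\to k$ together with the equicontinuity of the $t_m$ gives $t_m\o s_n\to t_m\o k$ uniformly as $n\to\infty$, and $t_m\o k\to h\o k$ as $m\to\infty$, so a diagonal sequence exhibits $h\o k$ as a local limit of elements of $S$. This is the first bullet. Inversion is handled by symmetry, since the inverse of a local limit of $s_n$ is the local limit of the $s_n^{-1}\in S$, so the maps in $\widetilde{\HH}$ that are genuine homeomorphisms form a pseudogroup $\overline{\HH}$; it contains $\HH$ because every $g\in S$, and hence by restriction and combination every element of $\HH$, lies in $\widetilde{\HH}$ and is a homeomorphism. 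This gives the third bullet. The fourth bullet follows because the implication $d_i(x,y)<\delta(\epsilon)\Rightarrow d_j(h(x),h(y))\le\epsilon$ is preserved under compact-open limits, so the closed generating set inherits the modulus $\delta$ of $S$ and $\overline{\HH}$ is strongly equicontinuous.

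The fifth bullet is the heart of the theorem and the step I expect to be the main obstacle. The inclusion $\overline{\HH}(x)\subset\overline{\HH(x)}$ is trivial, since $h(x)=\lim_n s_n(x)$ with each $s_n(x)\in\HH(x)$. For the reverse inclusion take $h_n\in\HH$ with $h_n(x)\to y$; the danger is that the domains of the $h_n$ may shrink to $\{x\}$, leaving no limit map. This is precisely what Proposition~\ref{p:equicontinuous} repairs: choosing a system of compact generation $E\subset S$ with extensions $\bar g$, it produces a single neighborhood $V$ of $x$ lying in the domain of every extended composite $\tilde h_n$ representing $h_n$, together with uniform containments $V\subset Z'_{i_0}$ and $\tilde h_n(V)\subset Z'_{i_1}$. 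On this fixed $V$ the family $\{\tilde h_n\}$ is equicontinuous with values in a relatively compact set, so Ascoli--Arzel\`a yields a subsequence converging to some $h\in\widetilde{\HH}$ with $h(x)=y$; by the second bullet $h$ is a homeomorphism around $x$, so $y\in\overline{\HH}(x)$.

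Finally, independence of the choice of $S$ follows from the fact that any two admissible generating sets $S,S'$ generate the same $\HH$ and are each closed under composition and restriction to open subsets. Every element of $S$ is then a combination of restrictions of composites of elements of $S'$, hence coincides near each point of its domain with a single restriction of some $s'\in S'$, and symmetrically; therefore $C(O,Z)\cap S$ and $C(O,Z)\cap S'$ have the same compact-open closure on small neighborhoods $O$, so $\widetilde{\HH}$ and $\overline{\HH}$ do not depend on $S$, with quasi-effectiveness ensuring that these local identifications are unambiguous. The conceptual difficulty throughout is concentrated in the interaction between Ascoli--Arzel\`a and the control of domains: strong equicontinuity alone produces limiting maps but not maps with a fixed common domain, while compact generation, through Proposition~\ref{p:equicontinuous}, is exactly what pins the domains down so that the limits exist as honest local homeomorphisms.
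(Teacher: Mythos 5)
A preliminary remark: this theorem is quoted from \cite[Theorem~12.1]{equicont} and the present paper contains no proof of it, so your attempt can only be measured against the argument that the cited paper must contain; your overall architecture --- local Ascoli--Arzel\`a limits, with Proposition~\ref{p:equicontinuous} supplying the uniform domain control that bare equicontinuity lacks --- is indeed the expected one, and your treatment of the fifth bullet (extensions $\tilde h_n$ on a fixed $V$, equicontinuity plus relative compactness, a limit in $\widetilde{\HH}$ sending $x$ to $y$) is essentially right.

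There is, however, a genuine gap in your proof of the second bullet. The inequality $s_n(B(x,r))\supset B(s_n(x),\delta(r))$ does \emph{not} follow from bi-equicontinuity in a general locally compact Polish space: equicontinuity of the $s_n^{-1}$ only controls displacements of points that already lie in $\im s_n$, and nothing forces $\im s_n$ to reach a uniform distance around $s_n(x)$. Concretely, let $Z$ be two parallel copies of $\R$ at distance $\epsilon$ (each copy is open in $Z$, so the identity on an interval of one copy is a legitimate pseudogroup element, even an isometry); its image contains no $Z$-ball of radius exceeding $\epsilon$, so no modulus $\delta(\cdot)$ depending only on $S$ can validate the claimed inclusion, and in spaces with a Cantor transverse factor this failure occurs at every scale. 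What you dismiss as ``domain bookkeeping for the $s_n^{-1}$'' is therefore the crux of the bullet, not a side issue: openness of the limit $h$ must be obtained by actually constructing a local inverse, i.e.\ by applying Proposition~\ref{p:equicontinuous} to the maps $s_n^{-1}\in S$ to extract, from the finite family $\VV$, a single $V'$ containing $h(x)$ that lies in the domains of extended inverses $\tilde g_n$, then passing to an Ascoli limit $g$ on $V'$ and using quasi-effectiveness to see that the extensions are coherent with $s_n^{-1}$, so that $g\circ h=\id$ near $x$ and $h\circ g=\id$ near $h(x)$ --- exactly the mechanism you correctly deploy for the fifth bullet, but omit here where it is equally indispensable. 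A second, smaller gap occurs in your argument for independence of $S$: an element $s_n\in S$ agrees with some $s'_n\in S'$ only on a neighborhood $W_n$ of $x$ that may shrink as $n\to\infty$, so the sets $C(O,Z)\cap S$ and $C(O,Z)\cap S'$ need not have the same closure on any fixed $O$ without a further uniform-domain argument; the repair is the bijectivity of the restriction maps $\rho^V_W$ on $S'$ for small $V,W$ (which the paper records as a consequence of Proposition~\ref{p:equicontinuous}) together with quasi-effectiveness to identify the extensions --- as written, your phrase ``coincides near each point with a single restriction of some $s'\in S'$'' conceals this $n$-dependence. The remaining bullets (composition via a diagonal argument, combination and restriction, persistence of the equicontinuity modulus under compact-open limits) are fine modulo the trivial replacement of $\delta(\epsilon)$ by $\delta(\epsilon/2)$ to recover strict inequalities.
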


If a pseudogroup $\HH$ satisfies the conditions of
Theorem~\ref{t:closure}, then the pseudogroup $\overline{\HH}$ is
called the {\em closure\/} of $\HH$.

Note that a pseudogroup $\HH$ of local transformations of a locally
compact space $Z$ is quasi-effective just when there is a symmetric
set $S$ of generators of $\HH$ that is closed under compositions and
restrictions to open subsets, and such that the restriction map
$\rho^V_W:S\cap C(V,Z)\to S\cap C(W,Z)$ is injective for all open
subsets $V,W$ of $Z$ with $W\subset V$. If moreover $Z$ is a locally
compact Polish space, and $\HH$ is compactly generated and strongly
equicontinuous, then any such $\rho^V_W$ is bijective for $V,W$ small
enough by Proposition~\ref{p:equicontinuous}. Moreover $\rho^V_W$ is
continuous with respect to the compact-open topology
\cite[p.~289]{munkres}, but it may not be a homeomorphism as shown by
the following example.

\begin{example}\label{ex:spheres}
  Let $Z$ be the union of two tangent spheres in $\R^3$, and let
  $h:Z\to Z$ be the combination of two rotations, one on each sphere,
  around the common axis and with rationally independent angles. Then
  $h$ generates a compactly generated, strongly equicontinuous and
  quasi-effective pseudogroup $\HH$ of local transformations of $Z$;
  indeed, $h$ is an isometry for the path metric space structure on
  $Z$ induced from that of $\R^3$.  Nevertheless, it is easy to see
  that the closure $\overline{\HH}$ is not quasi-effective.
\end{example}

\begin{lemma}\label{l:olHH quasi-effective} 
  Let $\HH$ be a compactly generated, strongly equicontinuous and
  quasi-effective pseudogroup of local transformations of a locally
  compact Polish space $Z$. Then $\ol{\HH}$ is quasi-effective if and
  only if there is a symmetric set $S$ of generators of $\HH$ that is
  closed under compositions and restrictions to open subsets, and such
  that $\rho^V_W:S\cap C(V,Z)\to S\cap C(W,Z)$ is a homeomorphism with
  respect to the compact-open topologies for small enough open subsets
  $V,W$ of $Z$ with $W\subset V$.
\end{lemma}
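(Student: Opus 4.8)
The plan is to translate quasi-effectiveness of $\ol{\HH}$ into an injectivity statement about restriction maps on closures of $S$, and then to match that injectivity with the homeomorphism hypothesis. Fix a set $S$ as in the statement (symmetric, closed under composition and restriction, realizing strong equicontinuity and quasi-effectiveness). For each sufficiently small open $V$ write $\ol S_V$ for the closure of $S\cap C(V,Z)$ in $C_{\text{\rm c-o}}(V,Z)$. By strong equicontinuity together with the Arzel\`a--Ascoli theorem, each $\ol S_V$ is compact; by Theorem~\ref{t:closure} its elements are homeomorphisms onto their images and belong to $\ol{\HH}$, and the collection of all such maps (closed up under restriction) is a system of generators of $\ol{\HH}$. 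Since the restriction maps $\rho^V_W\colon S\cap C(V,Z)\to S\cap C(W,Z)$ are bijective for $V,W$ small (Proposition~\ref{p:equicontinuous}), the characterization of quasi-effectiveness recalled just before the statement shows that $\ol{\HH}$ is quasi-effective if and only if $\rho^V_W\colon \ol S_V\to\ol S_W$ is injective for all small enough $V,W$ with $W\subset V$.

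First I would treat the direction ``$\ol{\HH}$ quasi-effective $\Rightarrow$ homeomorphism condition'', which is the soft one. Assume $\rho^V_W\colon\ol S_V\to\ol S_W$ is injective. It is continuous and, since $\rho^V_W(\ol S_V)$ is compact, closed, and contains the dense set $S\cap C(W,Z)=\rho^V_W(S\cap C(V,Z))$, it is onto $\ol S_W$. A continuous bijection from a compact space to a Hausdorff space is a homeomorphism, so $\rho^V_W\colon\ol S_V\to\ol S_W$ is a homeomorphism; restricting it to the dense subsets gives that $\rho^V_W\colon S\cap C(V,Z)\to S\cap C(W,Z)$ is a homeomorphism, as required.

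The converse is the heart of the matter. Assume $\rho^V_W$ is a homeomorphism on $S\cap C(V,Z)$ for $V,W$ small, with continuous inverse $\sigma\colon S\cap C(W,Z)\to S\cap C(V,Z)$ (the ``unique extension'' operator, well defined by quasi-effectiveness of $\HH$). I must show $\rho^V_W$ remains injective on $\ol S_V$. Over a point $g\in S\cap C(W,Z)$ this is immediate: if $h_n\in S\cap C(V,Z)$ and $h_n|_W\to g$, then continuity of $\sigma$ forces $h_n=\sigma(h_n|_W)\to\sigma(g)$, so the only preimage of $g$ in $\ol S_V$ is $\sigma(g)$; in particular $\rho^V_W(\ol S_V\smallsetminus(S\cap C(V,Z)))\subset\ol S_W\smallsetminus(S\cap C(W,Z))$. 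The remaining and genuinely hard task is to extend $\sigma$ continuously across $\ol S_W\smallsetminus(S\cap C(W,Z))$; continuity on a dense subset alone does not guarantee a continuous extension, as elementary examples show, so extra structure must be used. My plan is to exploit that $\sigma$ respects composition: since extensions within $S$ are unique, $\sigma(g'\circ g)=\sigma(g')\circ\sigma(g)$ whenever defined, so $\sigma$ is a (local) homomorphism. Giving $\ol S_V$ and $\ol S_W$ their natural structure of compact local groups under composition of germs (in the sense of Section~1), in which $S\cap C(V,Z)$ and $S\cap C(W,Z)$ sit as dense sub-local-groups, $\sigma$ becomes a continuous local homomorphism into a complete group. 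A continuous homomorphism of topological groups is uniformly continuous, and the equicontinuity of $S$ lets one run this argument uniformly, so that closeness to the identity on $W$ propagates, after composition, to closeness on $V$. Hence $\sigma$ extends to a continuous $\bar\sigma\colon\ol S_W\to\ol S_V$ with $\rho^V_W\circ\bar\sigma=\id$ and $\bar\sigma\circ\rho^V_W=\id$ (the latter holding on the dense set $S\cap C(V,Z)$, hence everywhere by continuity). Thus $\rho^V_W\colon\ol S_V\to\ol S_W$ is a homeomorphism, in particular injective, and $\ol{\HH}$ is quasi-effective.

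I expect the main obstacle to be precisely this last extension step: making precise the compact local group structure on the closures $\ol S_V$, checking that $S\cap C(V,Z)$ is a dense sub-local-group and that $\sigma$ is a continuous local homomorphism, and then invoking (or proving) the local-group version of the statement that a continuous homomorphism defined on a dense subgroup, with values in a complete group, extends continuously. The bookkeeping of domains, since composition shrinks them, is where care is needed; one either works with germs at a fixed point or passes to ever smaller $V$.
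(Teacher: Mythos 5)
Your reduction and your soft direction both lean on the claim that $\ol S_V$, the closure of $S\cap C(V,Z)$ in $C_{\text{\rm c-o}}(V,Z)$, is compact ``by strong equicontinuity together with Arzel\`a--Ascoli''. That claim is false in general: Arzel\`a--Ascoli requires, besides equicontinuity, that the sets $\{h(x): h\in S\cap C(V,Z)\}$ be relatively compact in $Z$, and nothing forces this when $Z$ is not compact. For the pseudogroup on $Z=\R$ generated by $x\mapsto x+1$ (compactly generated, strongly equicontinuous, quasi-effective), with $S$ the integer translations restricted to open sets, $S\cap C(V,\R)$ is an infinite closed \emph{discrete} subset of $C_{\text{\rm c-o}}(V,\R)$: its closure is itself and is not compact. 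This is precisely why, later in the paper, compactness is obtained only for the restricted set $G=\ol{\HH}_{DD'}$ of maps with $h(D')\cap D'\ne\emptyset$, after Proposition~\ref{p:A,B} is used to trap all translates in a fixed compact subset of $D$; no such trapping is available for all of $S\cap C(V,Z)$. Without compactness, your soft direction (continuous bijection from a compact space onto a Hausdorff space) collapses, and so does the completeness/compactness input to your hard direction.

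The hard direction has a second, deeper gap, which you yourself flag as the main obstacle but do not close: the ``compact local group of germs'' structure on $\ol S_V$, with $S\cap C(V,Z)$ a dense sub-local group and $\sigma$ a continuous local homomorphism, is never constructed, and it cannot exist as stated. Composition in $\ol S_V$ is only partially defined, with domains depending on the elements (you need $g(W)\subset\dom g'$, etc.), so a local group structure is available only for maps close to the identity --- this is what the paper builds in Section~4, and even there it requires quasi-analyticity of $\ol{\HH}$ and careful choices of $D,D'$. For elements of $\ol S_W$ far from the identity, translating continuity from the identity by composing with $g^{-1}$ shrinks the pair $(V,W)$ in a $g$-dependent way, and making that uniform is exactly the unproven content of your extension step. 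The paper's own proof shows the detour through closures is unnecessary: by Theorem~\ref{t:closure} (in particular the independence of $\widetilde{\HH}$ from $S$), quasi-effectiveness of $\ol{\HH}$ is equivalent to the sequential statement that for $h_n\in S$ with $W\subset V\subset\dom h_n$, $h_n|_W\to\id_W$ in $C_{\text{\rm c-o}}(W,Z)$ forces $h_n|_V\to\id_V$ in $C_{\text{\rm c-o}}(V,Z)$; this is precisely continuity of $(\rho^V_W)^{-1}$ at the identity, and continuity at an arbitrary $h\in S\cap C(V,Z)$ follows by composing with $h^{-1}$ inside $S$ (composition being continuous on equicontinuous families), with Proposition~\ref{p:equicontinuous} supplying extensions for small enough $V,W$. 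By staying with sequences in $S$ rather than passing to $\ol S_V$, one never needs compactness of the closures nor a continuous-extension theorem for $\sigma$.
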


\begin{proof}
  The result follows directly by observing that, according to
  Theorem~\ref{t:closure}, $\ol{\HH}$ is quasi-effective just when
  there is some symmetric set $S$ of generators of $\HH$ that is
  closed under compositions and satisfies the following condition: for
  any sequence $h_n$ in $S$ and open non-empty subsets $V,W$ of $Z$,
  with $W\subset V\subset\dom h_n$ for all $n$, if $h_n|_W\to\id_W$ in
  $C_{\text{\rm c-o}}(W,Z)$, then $h_n|_V\to\id_V$ in $C_{\text{\rm
  c-o}}(V,Z)$.
\end{proof}

\begin{cor}\label{c:olHH quasi-analytic} 
  Let $\HH$ be a compactly generated, strongly equicontinuous and
  quasi-analytic pseudogroup of local transformations of a locally
  connected and locally compact Polish space $Z$. Then $\ol{\HH}$ is
  quasi-analytic if and only if there is a symmetric set $S$ of
  generators of $\HH$ that is closed under compositions and
  restrictions to open subsets, and such that $\rho^V_W:S\cap
  C(V,Z)\to S\cap C(W,Z)$ is a homeomorphism with respect to the
  compact-open topologies for small enough open subsets $V,W$ of $Z$
  with $W\subset V$.
\end{cor}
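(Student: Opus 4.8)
The plan is to reduce Corollary~\ref{c:olHH quasi-analytic} directly to
Lemma~\ref{l:olHH quasi-effective} by exploiting the equivalence between
quasi-analyticity and quasi-effectiveness that the excerpt has already
recorded. The key observation is that the hypotheses here are the
local-connectedness version of those in the lemma: we are told that $Z$ is
locally connected and locally compact Polish, and that $\HH$ is compactly
generated, strongly equicontinuous, and quasi-analytic. Under local
connectedness, quasi-analyticity of a pseudogroup is equivalent to its
quasi-effectiveness by \cite[Lemma~9.6]{equicont}, as quoted in the text
preceding Proposition~\ref{p:A,B}. So $\HH$ is in particular quasi-effective,
and Lemma~\ref{l:olHH quasi-effective} applies verbatim to it.

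First I would invoke \cite[Lemma~9.6]{equicont} to pass from
``quasi-analytic'' to ``quasi-effective'' for $\HH$ itself, so that all the
standing hypotheses of Lemma~\ref{l:olHH quasi-effective} are in force. The
content of that lemma is a characterization of when $\ol{\HH}$ is
\emph{quasi-effective}, phrased exactly as the homeomorphism condition on the
restriction maps $\rho^V_W$ that appears in the present statement. Thus the
remaining task is to upgrade that conclusion from ``$\ol{\HH}$ is
quasi-effective'' to ``$\ol{\HH}$ is quasi-analytic,'' and symmetrically to
know these are the same thing for $\ol{\HH}$.

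The bridge for that upgrade is again \cite[Lemma~9.6]{equicont}, applied now
to the closure pseudogroup $\ol{\HH}$ rather than to $\HH$. For this I must
check that $\ol{\HH}$ satisfies the hypotheses of that equivalence, namely that
it acts on a locally connected, locally compact Polish space (which is just
$Z$, unchanged) and that it is the kind of pseudogroup to which the equivalence
applies. The space $Z$ is the same throughout, so local connectedness and the
locally compact Polish property are inherited automatically; and
Theorem~\ref{t:closure} already tells us $\ol{\HH}$ is a genuine strongly
equicontinuous pseudogroup containing $\HH$. Granting that the quasi-analytic
$\Leftrightarrow$ quasi-effective equivalence of \cite[Lemma~9.6]{equicont}
applies to $\ol{\HH}$ on this locally connected $Z$, the chain of equivalences
is: $\ol{\HH}$ quasi-analytic $\iff$ $\ol{\HH}$ quasi-effective (by
\cite[Lemma~9.6]{equicont}) $\iff$ the $\rho^V_W$ are homeomorphisms for small
$V,W$ (by Lemma~\ref{l:olHH quasi-effective}), which is exactly the asserted
biconditional.

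The step I expect to be the only real point requiring care is the verification
that \cite[Lemma~9.6]{equicont} may legitimately be applied to $\ol{\HH}$, not
merely to $\HH$. One must confirm that $\ol{\HH}$ is compactly generated (which
should follow from the fact that $\HH\subset\ol{\HH}$ shares the same orbit
closures, so a relatively compact open set meeting all $\HH$-orbits meets all
$\ol{\HH}$-orbits by Theorem~\ref{t:closure}) and that the defining set $S$ for
$\ol{\HH}$ inherits the compatibility with local connectedness needed so that
the quasi-analytic/quasi-effective dictionary is available. Once that
bookkeeping is in place, the corollary is immediate, and the proof is
genuinely short: it is the conjunction of Lemma~\ref{l:olHH quasi-effective}
with two applications of the local equivalence between quasi-analyticity and
quasi-effectiveness on locally connected spaces.
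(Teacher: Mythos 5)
Your proof is correct and is exactly the paper's intended argument: the corollary is stated without proof precisely because it follows from Lemma~\ref{l:olHH quasi-effective} by translating quasi-analyticity into quasi-effectiveness via \cite[Lemma~9.6]{equicont}, applied to both $\HH$ and $\ol{\HH}$ acting on the same locally connected, locally compact Polish space $Z$. Your one point of worry is unnecessary: as quoted in the paper, that equivalence holds for any pseudogroup acting on such a space, with no compact generation hypothesis, so it applies to $\ol{\HH}$ immediately.
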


Finally, let us recall from \cite{equicont} certain isometrization
theorem, which states that equicontinuous quasi-effective pseudogroups
are indeed pseudogroups of local isometries in some sense.  First, two
metrics on the same set are said to be {\em locally equal\/} when they
induce the same topology and each point has a neighborhood where both
metrics are equal.  Let $\{(Z_i,d_i)\}_{i\in I}$ be a family of metric
spaces such that $\{Z_i\}_{i\in I}$ is a covering of a set $Z$, each
intersection $Z_i\cap Z_j$ is open in $(Z_i,d_i)$ and $(Z_j,d_j)$, and
the metrics $d_i,d_j$ are locally equal on $Z_i\cap Z_j$ whenever this
is a non-empty set. Such a family will be called a {\em cover of $Z$
  by locally equal metric spaces\/}. Two such families are called {\em
  locally equal\/} when their union also is a cover of $Z$ by locally
equal metric spaces. This is an equivalence relation whose equivalence
classes are called {\em local metrics\/} on $Z$. For each local metric
${\mathfrak D}$ on $Z$, the pair $(Z,{\mathfrak D})$ is called a {\em
  local metric space\/}.  Observe that every metric induces a unique
local metric in a canonical way.  In turn, every local metric
canonically determines a unique quasi-local metric. Note also that
local metrics induced by metrics can be considered as germs of metrics
around the diagonal. Moreover a local or quasi-local metric is induced
by some metric if and only if it is hausdorff and paracompact
\cite[Theorems~13.5 and~15.1]{equicont}.

Now, a local homeomorphism $h$ of a local metric space $(Z,{\mathfrak
  D})$ is called a {\em local isometry\/} if there is some
$\{(Z_i,d_i)\}_{i\in I}\in{\mathfrak D}$ such that, for $i,j\in I$ and
$z\in Z_i\cap h^{-1}(Z_j\cap\im h)$, there is some neighborhood
$U_{h,i,j,z}$ of $z$ in $Z_i\cap h^{-1}(Z_j\cap\im h)$ so that
$d_i(x,y)=d_j(h(x),h(y))$ for all $x,y\in U_{h,i,j,z}$. This
definition is independent of the choice of the family
$\{(Z_i,d_i)\}_{i\in I}\in\mathfrak D$.  Then the isometrization
theorem is the following.

\begin{theorem}[{\cite[Theorem~ 15.1]{equicont}}]\label{t:isometrization}
  Let $\HH$ be a compactly generated, quasi-effective and strongly
  equicontinuous pseudogroup of local transformations of a locally
  compact Polish space $Z$. Then $\HH$ is a pseudogroup of local
  isometries with respect to some local metric inducing the topology
  of $Z$.
\end{theorem}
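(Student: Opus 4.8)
The plan is to build the required local metric by replacing the given quasi-local metric with a supremum of it taken along the action of $\HH$, so that the generators become exact isometries rather than merely equicontinuous maps, and so that the chart-dependence inherent in a quasi-local metric is washed out. Fix a family $\{(Z_i,d_i)\}_{i\in I}$ realizing strong equicontinuity together with a symmetric set $S$ of generators of $\HH$, closed under compositions and restrictions to open subsets, satisfying both the strong equicontinuity and the quasi-effectiveness conditions. For points $x,y$ lying in a common chart and close in the corresponding $d_i$, I would set
$$
D(x,y)=\sup\bigl\{\,d_j(h(x),h(y)) : h\in S,\ x,y\in\dom h,\ h(x),h(y)\in Z_j\,\bigr\},
$$
the supremum running over both the transformations $h$ and the admissible target charts $j$, with the identity included. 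Taking the supremum over $j$ as well as over $h$ is what upgrades a quasi-local metric to an honest local one: the value $D(x,y)$ no longer depends on a choice of chart, so a cover of $Z$ by small $D$-balls is automatically a cover by \emph{locally equal} metric spaces.

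First I would establish the metric axioms and the comparison with the original metric. Including the identity gives $D(x,y)\ge d_i(x,y)$, so the $D$-topology is at least as fine as the given one and $D$ separates points. Symmetry follows from symmetry of $S$, and the triangle inequality because $D$ is a supremum of pseudometrics. For the reverse comparison, the strong equicontinuity estimate bounds each $d_j(h(x),h(y))$ by $\epsilon$ once $d_i(x,y)<\delta(\epsilon)$, uniformly over $h\in S$; hence $D(x,y)\le\epsilon$ on a $d_i$-neighborhood of the diagonal, which together with $D\ge d_i$ shows that $D$ induces the topology of $Z$ near the diagonal. For this supremum to be meaningful I would use compact generation, through Proposition~\ref{p:equicontinuous}, to confine attention near each point to composites of a fixed finite generating set whose common domain contains a fixed neighborhood and whose images lie in single shrunken charts.

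The isometry property is then nearly formal. For a generator $g\in S$ and nearby $x,y$, every admissible $h$ for the pair $g(x),g(y)$ produces $h\circ g\in S$, admissible for $x,y$, with $d_j(h(g(x)),h(g(y)))=d_j((h\circ g)(x),(h\circ g)(y))$; this yields $D(g(x),g(y))\le D(x,y)$, and applying the same reasoning to $g^{-1}\in S$ gives the reverse inequality, so $D(g(x),g(y))=D(x,y)$. Since $\HH$ is generated by such $g$ and local isometry is a local notion, every element of $\HH$ is a local isometry for $D$.

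I expect the main obstacle to be proving that $D$ is finite and genuinely induces the topology of $Z$, which hinges on controlling, uniformly over the large family $S$, the images of nearby points. Strong equicontinuity bounds $d_j(h(x),h(y))$ only once $h(x)$ and $h(y)$ are already known to share a chart $Z_j$, so the crux is to guarantee this common chart for all competing $h$ simultaneously when $x,y$ are close. Compact generation, via Proposition~\ref{p:equicontinuous}, supplies such control for composites of a fixed finite generating set by confining their images to single shrunken charts; and quasi-effectiveness, via Proposition~\ref{p:A,B}, ensures that nearby points are carried by every $f\in S$ into a common relatively compact region, so that all the quantities $d_j(f(x),f(y))$ are simultaneously defined and bounded, the competing maps being moreover determined by their germs. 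Assembling these estimates to obtain finiteness, the correct topology, and the local equality of the resulting charts is the technical heart of the argument; the metric axioms and the isometry property then follow as indicated.
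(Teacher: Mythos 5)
This paper does not actually prove Theorem~\ref{t:isometrization}: it is imported verbatim from \cite[Theorem~15.1]{equicont}, so the comparison must be with that source, whose isometrization argument is of the same supremum type as yours (take the supremum of the quasi-local metric over the equicontinuous family $S$, localized by Proposition~\ref{p:equicontinuous}). Your outline is therefore the right one in substance: the invariance argument via $h\mapsto h\circ g$ and $h\mapsto h\circ g^{-1}$ is correct, and strong equicontinuity alone already gives finiteness of the supremum and the comparison of topologies near the diagonal (here you worry more than necessary, since every term $d_j(h(x),h(y))$ in your supremum is exactly of the form controlled by the strong equicontinuity estimate once $d_i(x,y)<\delta(\epsilon)$).

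The one step that fails as literally written is the triangle inequality. For fixed $h$ and $j$, the expression $d_j(h(x),h(y))$ is a pseudometric only on $\dom h\cap h^{-1}(Z_j\cap\im h)$, and in your definition the admissible index set of pairs $(h,j)$ varies with the pair of points; for $x,z$ close and $y$ between them there may be an $h$ admissible for $(x,z)$ with $y\notin\dom h$, or with $h(y)$ outside the chart $Z_j$, and then $d_j(h(x),h(z))$ cannot be split through $y$. So ``$D$ is a supremum of pseudometrics'' is not true in any form that yields the triangle inequality. The repair is the device you mention but must be built into the definition itself, not added afterwards: fix $V\in\VV$ as furnished by Proposition~\ref{p:equicontinuous} (this is precisely where compact generation enters) and take the supremum only over the extended maps $\tilde h$ defined on all of $V$ with $\tilde h(V)$ contained in a single shrunken chart; then every term is defined for every pair of points of $V$ and the triangle inequality is immediate. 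Quasi-effectiveness is then needed for a sharper reason than the one you give: it makes the extension $\tilde h$ unique on connected pieces, so that the re-based supremum is determined by germs, the metrics attached to overlapping members of $\VV$ are \emph{locally equal} (yielding an honest local metric rather than a quasi-local one), and your composition bijection, now run through extensions, still produces exact invariance. Without uniqueness of extensions the value of the supremum would depend on choices and both the locality and the invariance arguments would break. With that reorganization your plan is sound and agrees with the cited proof.
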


\section{Riemannian pseudogroups}

\begin{defn}\label{d:Riemannian pseudogroup}
  A pseudogroup $\HH$ of local transformations of a space $Z$ is
  called a \textit{Riemannian pseudogroup} if $Z$ is a 
  Hausdorff paracompact $C^\infty$-manifold and all maps in $\HH$ are local
  isometries with respect to some Riemannian metric on $Z$.
\end{defn}

\begin{example}\label{e:lie}
  Let $G$ be a local Lie group, $G_0\subset G$ a compact subgroup.
  Then the canonical local action of some neighborhood of the identity
  in $G$ on some neighborhood of the identity class in $G/G_0$
  generates a transitive Riemannian pseudogroup. In fact, since $G_0$
  is compact, there is a $G$-left invariant and $G_0$-right invariant
  Riemannian metric on some neighborhood of the identity in $G$, which
  induces a $G$-invariant Riemannian metric on some neighborhood of
  the identity class in $G/G_0$.  With more generality, if
  $\Gamma\subset G$ a dense sub-local group, then the canonical local
  action of some neighborhood of the identity in $\Gamma$ on some
  neighborhood of the identity class in $G/G_0$ generates a transitive
  Riemannian pseudogroup.  Moreover this Riemannian pseudogroup is
  complete in the sense of \cite{Haefliger85}. It is well known that
  any transitive complete Riemannian pseudogroup is equivalent to a
  pseudogroup of this type, which follows from the pseudogroup version
  of Molino description of Riemannian foliations.
\end{example}

The pseudogroup version of the main result of this paper is the
following topological characterization of transitive compactly
generated Riemannian pseudogroups.

\begin{theorem}\label{t:Riemannian}
  Let $\HH$ be a transitive, compactly generated pseudogroup of local
  transformations of a locally compact Polish space $Z$. Then $\HH$ is a
  Riemannian pseudogroup if and only if $Z$ is locally
  connected and finite dimensional, $\HH$ is strongly equicontinuous
  and quasi-analytic, and $\ol{\HH}$ is quasi-analytic.
\end{theorem}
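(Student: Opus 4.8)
The plan is to prove the two implications separately; the forward implication is routine, while the converse carries all the difficulty and is where the machinery of Section~1 and Jacoby's theorems enters.

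For the easy direction, suppose $\HH$ is a Riemannian pseudogroup, so $Z$ is a $C^\infty$-manifold and the elements of $\HH$ are local isometries for a Riemannian metric, with associated distance organized as a (quasi-)local metric in the sense recalled before Theorem~\ref{t:isometrization}. Then $Z$ is locally connected and finite dimensional simply because it is a manifold. Strong equicontinuity is immediate with $\delta(\epsilon)=\epsilon$, taking $S$ to be the set of all composites of isometries in $\HH$, since isometries preserve the distance. Riemannian isometries are rigid: an isometry is determined by its $1$-jet at a point, so the set where two local isometries agree is open and closed; hence agreement on a nonempty open subset of a connected domain propagates, giving quasi-analyticity of $\HH$. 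Finally a compact-open limit of local isometries is again a local isometry, so every element of $\ol{\HH}$ is a local isometry and the same rigidity yields quasi-analyticity of $\ol{\HH}$.

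For the converse, assume the topological conditions. Local connectedness makes quasi-analyticity and quasi-effectiveness coincide, and Theorem~\ref{t:minimal} upgrades transitivity to minimality, so every $\HH$-orbit is dense. By Theorem~\ref{t:closure} the closure $\ol{\HH}$ exists, is strongly equicontinuous, and has orbits equal to the closures of the $\HH$-orbits; thus $\ol{\HH}(x)=Z$ for each $x$. Fix a base point $x$ and a relatively compact open neighborhood. Using the compact-open topology on the elements of $\ol{\HH}$ with small domains and the hypothesis that $\ol{\HH}$ is quasi-analytic---equivalently, by Corollary~\ref{c:olHH quasi-analytic}, that the restriction maps $\rho^V_W$ are homeomorphisms---the germs near the identity assemble into a local group $G$ in the sense of Section~1, acting on $Z$ by evaluation. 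Strong equicontinuity together with Arzel\`a--Ascoli makes $G$ locally compact, separability of $Z$ makes it second countable and metrizable, and the homeomorphism property of $\rho^V_W$ is exactly what guarantees that $G$ is a genuine local group rather than a mere groupoid. The action is locally transitive at $x$ since $\ol{\HH}(x)=Z$. Applying Theorem~\ref{t:main-claim} to this local action produces an isotropy sub-local group $H$ with $G/(H,U)\to Z$ a local homeomorphism at $x$, and a connected normal subgroup $K\subset H$ with $(K,U)\in\Pi G$ and $G/(K,U)$ finite dimensional. As $K$ is normal and contained in the isotropy, it lies in the ineffective kernel, so $\tilde G=G/K$ acts locally transitively on $Z$ with compact isotropy $\tilde H=H/K$ (compactness again by Arzel\`a--Ascoli applied to stabilizing transformations), and $\tilde G$ is a finite dimensional locally compact metrizable local group. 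By Theorem~\ref{t:jacoby-product} it is locally isomorphic to $L\times D$ with $L$ a Lie group and $D$ compact zero-dimensional.

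The crux, and the step I expect to be the main obstacle, is to show that $D$ is trivial, equivalently that $\tilde G$ has no small subgroups, so that Theorem~\ref{t:jacoby-without small subgroups} makes $\tilde G$ a local Lie group. This is precisely where local connectedness of $Z$ and quasi-analyticity of $\ol{\HH}$ are indispensable: a nontrivial totally disconnected compact factor $D$ would, through the identification $Z\cong_{\mathrm{loc}}\tilde G/\tilde H$ and the profinite structure of $D$, yield elements of $\ol{\HH}$ arbitrarily close to the identity with nontrivial germ that, under local connectedness, must be the identity on open sets without being locally trivial---exactly the failure of quasi-analyticity exhibited by Example~\ref{ex:spheres}. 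Carrying this out rigorously, using the full force of Corollary~\ref{c:olHH quasi-analytic}, is the heart of the argument. Granting it, $\tilde G$ is a local Lie group and $\tilde H$ a compact subgroup, so $Z\cong_{\mathrm{loc}}\tilde G/\tilde H$ is a smooth manifold carrying a $\tilde G$-invariant Riemannian metric obtained by averaging over $\tilde H$, as in Example~\ref{e:lie}. By minimality and transitivity every point of $Z$ admits such a chart, and the transition maps are elements of $\ol{\HH}$, hence isometries, so the charts are smoothly compatible and the metric is global and $\ol{\HH}$-invariant. Since $\ol{\HH}$ is the closure of $\HH$, the germs of $\HH$ form a dense sub-local group $\Gamma\subset\tilde G$, so $\HH$ is exactly of the type in Example~\ref{e:lie}, and its elements are local isometries of this metric. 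Therefore $\HH$ is a Riemannian pseudogroup.
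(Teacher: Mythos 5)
Your outline tracks the paper's actual strategy almost exactly: build a local group $G$ from $\ol{\HH}$ with the compact-open topology, act on a transversal, invoke Theorem~\ref{t:main-claim} and Jacoby's structure theorems, and conclude via Example~\ref{e:lie}. But the step that you yourself call ``the crux'' --- that the compact zero-dimensional factor is trivial, so that Theorem~\ref{t:jacoby-without small subgroups} applies --- is precisely the step you do not prove: ``Granting it'' carries the entire weight of the theorem, and the mechanism you sketch for it is not the right one. A nontrivial compact zero-dimensional normal factor $N$ does not by itself produce elements of $\ol{\HH}$ that are the identity on an open set while having nontrivial germ; nothing forces elements of $N$ near $e$ to fix any open subset of $Z$. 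The paper's argument splits into two independent mechanisms that your heuristic conflates. First, local connectedness of $Z$ enters through the orbit identification: writing $P=N\cap G_0$, a neighborhood of $x_0$ in $Z$ is homeomorphic to the product of a euclidean ball and an open set $T\subset N/P$; local connectedness forces $T$ to be finite, hence $N/P$ is discrete and $P$ is open in $N$. Second --- and this is where quasi-analyticity of $\ol{\HH}$ actually does its work --- one needs Lemma~\ref{l:nonormal}: the isotropy group $G_0$ contains no nontrivial normal sub-local group of $G$, since a normal element of the isotropy acts trivially near $x_0$ and quasi-analyticity then forces it to be $e$. One then chooses, by the approximation Theorem~\ref{t:jacoby-approximated}, a compact normal subgroup $K$ inside a neighborhood $W$ with $W\cap P=W\cap N$; then $N\cap K=P\cap K\subset G_0$ is normal in $G$, hence trivial, while $N/(N\cap K)$ is a zero-dimensional Lie group, so $N\cap K$ is open in $N$; therefore $N$ is discrete and compact, i.e.\ finite, and $G$ is a local Lie group. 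Without this chain your proof has no content at its critical joint.

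Two secondary points. Your passage to $\tilde G=G/K$ is unnecessary and hides a simplification you already have in hand: the connected normal subgroup $K\subset G_0$ furnished by Theorem~\ref{t:main-claim} acts trivially near $x_0$ (your own ``ineffective kernel'' computation), so by Lemma~\ref{l:nonormal} --- i.e.\ by quasi-analyticity of $\ol{\HH}$ --- it is trivial, and $G$ itself is finite dimensional; no quotient action on $Z$ need be constructed. Also, assembling $G$ ``from germs'' and invoking Arzel\`a--Ascoli is looser than what compactness requires: one must first use Proposition~\ref{p:equicontinuous} to extend all relevant elements to a common domain $D$, use Proposition~\ref{p:A,B} to trap the translates of a compact core $D'$ in a fixed compact subset of $D$, and use quasi-analyticity of $\ol{\HH}$ to make the multiplication well defined through uniqueness of extensions; this is where Corollary~\ref{c:olHH quasi-analytic} and the continuity of the restriction maps $\rho^V_W$ enter, much as you guessed. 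These defects are repairable; the unproven no-small-subgroups step is the genuine gap.
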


\begin{rem}
  The closure $\ol{\HH}$ of $\HH$ exists by virtue of
  Theorem~\ref{t:closure}, because the space $Z$ is locally connected,
  hence the pseudogroup $\HH$ is quasi-effective because it is
  quasi-analytic \cite[Lemma 9.6]{equicont}.
\end{rem}

The following is a direct consequence of the above theorem.

\begin{cor}
  Let $\HH$ be a compactly generated, strongly equicontinuous and
  quasi-analytic pseudogroup of local transformations of a locally
  compact Polish space $Z$. Then the $\HH$-orbit closures are
  $C^\infty$ manifolds if and only if they are locally connected and
  finite dimensional, and the induced pseudogroup $\ol{\HH}$ is
  quasi-analytic on them.
\end{cor}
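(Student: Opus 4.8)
The plan is to apply Theorem~\ref{t:Riemannian} separately to the pseudogroup induced on each orbit closure. The orbit closures of $\HH$ form a partition of $Z$ into closed, saturated sets, each of which is a single orbit of $\ol\HH$ by Theorem~\ref{t:closure}. So I would fix one orbit closure $\ol L=\ol{\HH(x)}$ and let $\HH_L$ be the pseudogroup induced on the locally compact Polish space $\ol L$ by restricting the elements of $\HH$ to their traces on $\ol L$. The assertion is local to $\ol L$, and ``$\ol\HH$ is quasi-analytic on $\ol L$'' I read as the quasi-analyticity of the induced closure $\ol{\HH_L}$; so the task is to show that $\ol L$ is a $C^\infty$ manifold if and only if $\ol L$ is locally connected and finite dimensional and $\ol{\HH_L}$ is quasi-analytic.

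First I would check that $\HH_L$ inherits the standing hypotheses of Theorem~\ref{t:Riemannian}. It is transitive, since $\HH(x)\subset\ol L$ is dense, hence minimal by Theorem~\ref{t:minimal}; it is compactly generated, by intersecting with $\ol L$ a relatively compact open set meeting every $\HH$-orbit together with a system of compact generation on it; and it is strongly equicontinuous for the quasi-local metric obtained by restricting to $\ol L$ a family $\{(Z_i,d_i)\}_{i\in I}$ realizing strong equicontinuity. The one point requiring care among the hypotheses is quasi-analyticity. Here I would use that quasi-analyticity is a condition on the individual maps of a pseudogroup, so it passes to sub-pseudogroups: if $\GG\subset\GG'$ and $\GG'$ is quasi-analytic then so is $\GG$. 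Since $\HH_L\subset\ol{\HH_L}$ by Theorem~\ref{t:closure}, quasi-analyticity of $\ol{\HH_L}$ already forces $\HH_L$ to be quasi-analytic.

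This settles the implication from right to left. Assuming $\ol L$ locally connected, finite dimensional, and $\ol{\HH_L}$ quasi-analytic, the previous paragraph verifies all hypotheses of Theorem~\ref{t:Riemannian} for $\HH_L$, so $\HH_L$ is a Riemannian pseudogroup and in particular $\ol L$ is a $C^\infty$ manifold by Definition~\ref{d:Riemannian pseudogroup}. For the converse I would argue as follows. If $\ol L$ is a $C^\infty$ manifold it is automatically locally connected and finite dimensional, so only the quasi-analyticity of $\ol{\HH_L}$ remains to be established. By the isometrization theorem (Theorem~\ref{t:isometrization}), $\HH_L$ consists of local isometries of some local metric $\mathfrak D$ on $\ol L$; since each element of $\ol{\HH_L}$ is, near every point, a compact-open limit of such local isometries, it too is a local isometry of $\mathfrak D$. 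On a connected $C^\infty$ manifold a local isometry that restricts to the identity on a nonempty open set is the identity there and nearby, so together with local connectedness of $\ol L$ this yields the quasi-analyticity of $\ol{\HH_L}$.

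The hard part will be the bookkeeping around the closure operation and the rigidity step in the converse. On one hand, the hypothesis speaks of $\ol\HH$, while the theorem is applied to $\ol{\HH_L}$; one must check that these agree, or at least have the same quasi-analyticity, and the subtlety is that limits in the compact-open topology are only tested on $\ol L$, so a priori $\ol{\HH_L}$ may be larger than the restriction $\ol\HH|_{\ol L}$. Strong equicontinuity, through Proposition~\ref{p:equicontinuous}, is precisely what is available to propagate convergence from $\ol L$ to a neighborhood in $Z$ and to identify the two. On the other hand, the converse uses the isometrization theorem on $\ol L$, which first requires $\HH_L$ to be quasi-effective; securing this from the quasi-analyticity of $\HH$ together with the local connectedness of the manifold $\ol L$, and then invoking rigidity of isometries of a connected manifold, is where the manifold hypothesis is genuinely used.
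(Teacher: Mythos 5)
Your right-to-left direction is essentially the paper's own argument: the paper proves the corollary in one sentence by applying Theorem~\ref{t:Riemannian} to the pseudogroup induced on an orbit closure, and your verifications (transitivity via density of the orbit and Theorem~\ref{t:minimal}, restriction of compact generation and of strong equicontinuity to the closed invariant set $\ol L$, quasi-analyticity of $\HH_L$ as a sub-pseudogroup of $\ol{\HH_L}$) supply the bookkeeping the paper leaves implicit. One caveat there: the object $\ol{\HH_L}$ is only defined through Theorem~\ref{t:closure}, whose hypotheses already include quasi-effectiveness of $\HH_L$; so deducing quasi-analyticity of $\HH_L$ from that of $\ol{\HH_L}$ is circular as written. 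Since $\ol L$ is assumed locally connected in this direction, you should first establish quasi-effectiveness of $\HH_L$ (using the equivalence of quasi-effectiveness and quasi-analyticity on locally connected, locally compact Polish spaces, \cite[Lemma~9.6]{equicont}) before invoking the closure; this is repairable.

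The genuine gap is the rigidity step in your left-to-right direction: ``on a connected $C^\infty$ manifold a local isometry that restricts to the identity on a nonempty open set is the identity there and nearby.'' The local metric $\mathfrak D$ produced by Theorem~\ref{t:isometrization} is merely a metric structure inducing the topology of $\ol L$; it bears no relation to the smooth structure, and rigidity of local isometries is a property of the metric, not of the underlying manifold. It holds for Riemannian metrics, and the paper's final section proves it for analytic local length spaces, but that same section shows it fails for general metric spaces (the real-tree examples) and exhibits a length metric on $\R^2$ (the supremum norm) that is not analytic, where quasi-analyticity is rescued only by the ad hoc observation that its local isometries are locally linear. Indeed the paper states explicitly that it has no example of a pseudogroup as in the main theorem whose closure fails to be quasi-analytic: whether quasi-analyticity of $\ol\HH$ is automatic is precisely the ``elusive'' hypothesis discussed there, and your one-line claim would settle it affirmatively whenever the space is a manifold, so it cannot be taken for granted. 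The paper's route to the necessity direction is different: it identifies the closure of $\HH$ acting on the orbit closure with the homogeneous model of Example~\ref{e:lie} (a dense sub-local group $\Gamma$ of a local Lie group $G$ acting on $G/G_0$ with an invariant Riemannian metric), where the rigidity being used is Riemannian rigidity in the model, not metric rigidity of an arbitrary compatible metric on a topological manifold. Your argument substitutes the latter for the former, and that is where it fails; Example~\ref{ex:spheres} (where the closure of a strongly equicontinuous, quasi-effective pseudogroup fails to be quasi-effective on a non-manifold) shows how delicate the passage to the closure is, so this step needs a real proof, not an appeal to the manifold structure.
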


\begin{proof}
  This follows from Theorem~\ref{t:Riemannian} because the closure of
  $\HH$ acting on the closure of an orbit is equivalent to a
  pseudogroup like Example~\ref{e:lie}.
\end{proof}

The proof of Theorem~\ref{t:Riemannian} will be given in the next
section; in the interim, some examples illustrating the necessity of
several hypotheses are described.

\begin{example}
  Let $Z$ be the product of countably infinitely many circles. This is
  a compact, locally connected Polish group which acts on itself by
  translations in an equicontinuous way. Let $\Z\to Z$ be an injective
  homomorphism with dense image. Then the action of $\Z$ on $Z$
  induced by this homomorphism is minimal and equicontinuous, and so
  it generates a minimal, quasi-analytic and equicontinuous
  pseudogroup, which is not Riemannian because $Z$ is of infinite
  dimension.
\end{example}

\begin{example} 
  Let $Z$ be the set of $p$-adic numbers $x\in \mathbb{Q}_p$ with
  $p$-adic norm $|x|_p\le 1$. Then the operation $x\mapsto x+1$ defines
  an action of $\Z$ on $Z$ which is minimal and equicontinuous (it
  preserves the $p$-adic metric on $Z$). Thus it generates a minimal,
  quasi-analytic and equicontinuous pseudogroup, which is not
  Riemannian because $Z$ is zero-dimensional.
\end{example}

\begin{example}
  A related example is as follows. Let $Z$ be the standard cantor set
  in $[0,1]\subset \R$ together with all integer translates. Then
  there is a pseudogroup $\HH$ acting on $Z$ which is generated by
  translations of the line which locally preserve $Z$. In fact, $\HH$
  is a pseudogroup of local isometries for two geometrically distinct
  metrics, the euclidean and the dyadic.
\end{example}

%
%

\begin{example}
  The previous example can be generalized, replacing $Z$ by the
  universal Menger curve \cite[Ch. 15]{menger}. This space $Z$ (to be
  precise, a modification of it) is constructed as an invariant set of
  the pseudogroup of local homeomorphisms of $\R^3$ generated by the
  map $f(x)=3x$ and the three unit translations parallel to the
  coordinate axes.  There is a pseudogroup acting on $Z$ generated by
  euclidean isometries which locally preserve $Z$. It is fairly easy
  to see that such a pseudogroup is minimal, quasi-analytic and
  equicontinuous. Moreover $Z$ is locally connected and of dimension
  one.  However, this pseudogroup is not compactly generated.
\end{example}

\section{Equicontinuous pseudogroups and Hilbert's 5th problem}

This section is devoted to the proof of Theorem~\ref{t:Riemannian}.
The ``only if'' part is obvious, so it is enough to show the ``if''
part, which has essentially two steps. In the first one, a local group
action on $Z$ is obtained as the closure of the set of elements of
$\HH$ which are sufficiently close to the identity map on an
appropriate subset of $Z$. This construction follows
Kellum~\cite{kellum93}. The second step invokes the theory behind the
solution to the local version of Hilbert's 5th problem in order to
show that the local group is a local Lie group, and thus this local
action is isometric for some Riemannian metric if its isotropy
subgroups are compact.  So $\HH$ is proved to be Riemannian by showing
that it is of the type described in Example~\ref{e:lie}.

By Theorem~\ref{t:isometrization}, there is a local metric structure
$\mathfrak D$ on $Z$ with respect to which the elements of $\HH$ are
local isometries. Take any $\{(Z_i,d_i)\}_{i\in I}\in{\mathfrak D}$
satisfying the condition of strong equicontinuity.  Let $U$ be a
relatively compact non-trivial open subset of $Z$, and $\VV$ a family
of open subsets which cover $U$ as in
Proposition~\ref{p:equicontinuous}. Let $V$ be an element of $\VV$
which meets $U$, which is assumed to be contained in $Z_{i_0}$ for
some $i_0\in I$, and let $D\subset V$ be an open connected subset with
compact closure also contained in $V$. According to
Proposition~\ref{p:equicontinuous}, if $h\in \HH$ is such that $\dom
h\subset D$ and $h(D) \cap U\ne \emptyset$, then there exists an
element $\tilde{h}\in \HH$ which extends $h$ and whose domain contains
$V$. Moreover, as $\HH$ is quasi-analytic and $D$ connected, such
extension $\tilde{h}$ is unique on $D$. In particular, such $h$ admits
a unique extension to a homeomorphism of $\ol{D}$ onto its image.

Under the current hypothesis, the completion $\ol{\HH}$ of $\HH$ is a
quasi-analytic pseudogroup of transformations of $Z$ whose action on
$Z$ has a single orbit.  Let $\ol{\HH}_D$ be the collection of
homeomorphisms $h|_D$ with $h\in \ol{\HH}$ an element whose domain
contains $D$. Let $D'\subset D$ be a connected, compact set with
non-empty interior, and let
$$
\ol{\HH}_{DD'} =\left\{ h\in \ol{\HH}_D\mid h(D')\cap D'\ne
\emptyset\right\}\;.
$$ 
By the strong equicontinuity of $\ol{\HH}$ and
Proposition~\ref{p:A,B}, the set $D'$ can be chosen so that all the
translates $h(D')$, $h\in \ol{\HH}_{DD'}$, are contained in a fixed
compact subset $K$ of $D$.  Once this choice of $D'$ is made, let
$G=\ol{\HH}_{DD'}$ be the resulting space.

The space $G$ is endowed with the compact open topology as a subset of
$C(D,Z)$. Every element of $G$ is actually defined on $V$, hence on
$\ol{D}$, and so the compact open topology can be described by the
supremum metric given by
$$ d(g_1,g_2)=\sup_{x\in D} d_{i_0}(g_1(x), g_2(x))\;,$$
where $d_{i_0}$ is the distance function on $Z_{i_0}\subset Z$ as above.

\begin{lemma}
  Endowed this topology, $G$ is a compact space.
\end{lemma}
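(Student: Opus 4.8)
The plan is to apply the Arzelà–Ascoli theorem to the family $G$, regarded as a subset of $C(\ol D,Z)$ with the topology of uniform convergence (which coincides with the compact-open topology since $\ol D$ is compact, and which is induced by the supremum metric $d$ written above; every $g\in G$ extends continuously to $\ol D$, so this makes sense). As $G$ is metrizable, it suffices to prove three things: that $G$ is \emph{equicontinuous}; that $G$ is \emph{pointwise relatively compact}, i.e. for each $x\in\ol D$ the set $\{g(x):g\in G\}$ has compact closure in $Z$; and that $G$ is \emph{closed} in $C(\ol D,Z)$. The first two yield relative compactness by Ascoli, and closedness upgrades this to compactness.

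First I would establish equicontinuity and the pointwise bound. Every $g\in G$ is the restriction to $\ol D$ of an element of $\ol{\HH}$, and $\ol{\HH}$ is strongly equicontinuous by Theorem~\ref{t:closure}; hence there is a single modulus $\delta(\epsilon)$, valid for all $g\in G$ simultaneously, with $d_{i_0}(x,y)<\delta(\epsilon)\Rightarrow d_{i_0}(g(x),g(y))<\epsilon$ on $\ol D\subset Z_{i_0}$. This is uniform equicontinuity. For the pointwise bound, recall that $D'$ was chosen, via Proposition~\ref{p:A,B}, so that $g(D')\subset K$ for the fixed compact set $K\subset D$ and every $g\in G$. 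Since $\ol D$ is connected and compact, a finite chain of $\delta$-balls of bounded length joins any $x\in\ol D$ to a point of $D'$, so the oscillation $\diam_{i_0}g(\ol D)$ is bounded independently of $g$; together with Proposition~\ref{p:equicontinuous}, which confines each image to one of the finitely many relatively compact shrunken pieces $Z'_{i_1}$ that meet $K$, this shows that $\bigcup_{g\in G}g(\ol D)$ lies in a fixed compact subset of $Z$. In particular each evaluation set $\{g(x):g\in G\}$ is relatively compact.

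Next I would prove that $G$ is closed. Let $g_n\in G$ converge uniformly to some $g_\infty\in C(\ol D,Z)$. Each $g_n$ lies in $\ol{\HH}$, so by the description of $\ol{\HH}$ in Theorem~\ref{t:closure} it is, near each point, a compact-open limit of elements of $S$; a diagonal argument then places $g_\infty$ in $\ol{\HH}$ in the same local sense, so $g_\infty$ is a local homeomorphism around each point of $\ol D$. Because $\ol{\HH}$ is symmetric and strongly equicontinuous, the inverses $g_n^{-1}$ share a common modulus of equicontinuity; from $g_\infty(x)=g_\infty(y)$ one gets $d_{i_0}(g_n(x),g_n(y))\to 0$ and hence $x=y$, so $g_\infty$ is injective and thus a homeomorphism onto its image, a genuine element of $\ol{\HH}$. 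Applying Ascoli to the extensions $\tilde g_n$ on $V$ gives, after passing to a subsequence, an element of $\ol{\HH}$ defined on a neighborhood of $\ol D$ whose restriction is $g_\infty$, so $g_\infty\in\ol{\HH}_D$. Finally, choosing $x_n\in D'$ with $g_n(x_n)\in D'$ and extracting a convergent subsequence $x_n\to x\in D'$, continuity and uniform convergence give $g_\infty(x)=\lim g_n(x_n)\in D'$, whence $g_\infty(D')\cap D'\neq\emptyset$ and $g_\infty\in G$. Therefore $G$ is closed.

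The main obstacle is this closedness step, and within it the verification that the uniform limit $g_\infty$ is not merely continuous but a homeomorphism lying in $\ol{\HH}$ and defined on a neighborhood of $D$. This is precisely where the structure of the closure pseudogroup and the standing hypotheses enter: strong equicontinuity provides the uniform moduli that make Ascoli applicable and force injectivity of the limit through equicontinuity of the inverses, while the closedness of $\widetilde{\HH}$ under the relevant limits (Theorem~\ref{t:closure}) keeps $g_\infty$ inside the pseudogroup. Equicontinuity together with the anchoring $g(D')\subset K$ controls the images, so once closedness is in hand the compactness of $G$ follows at once from Arzelà–Ascoli.
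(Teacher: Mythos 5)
Your proposal is correct and follows essentially the same route as the paper: sequential compactness via Arzel\`a--Ascoli (uniform equicontinuity from strong equicontinuity, images anchored by $g(D')\subset K$), the limit landing in $\widetilde{\HH}$ by Theorem~\ref{t:closure}, injectivity of the limit forced by a common modulus for the inverses, and $g_\infty(D')\cap D'\neq\emptyset$ by extracting $x_n\to x$ in the compact set $D'$. The only cosmetic difference is that you get injectivity directly from the uniform modulus of the inverses $g_n^{-1}$, where the paper passes to local inverses $h_n\in\HH$ on a neighborhood $W$, takes their equicontinuous limit $h$, and derives a contradiction --- the same idea in slightly different packaging.
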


\begin{proof}
  It has to be shown that any sequence $g_n$ of elements of $G$ has a
  convergent subsequence. By equicontinuity, $g_n$ may be assumed to
  be made of elements of $\HH$. By Proposition~\ref{p:equicontinuous}
  and the definition of $G$, each $g_n$ can be extended to a
  homeomorphism whose domain contains $V$. According to
  Theorem~\ref{t:closure}, the sequence $g_n$ converges uniformly on
  $D$ to a map $g\in\widetilde{\HH}$.  It needs to be shown that
  $g:D\to g(D)$ is a homeomorphism and that it satisfies $g(D')\cap
  D'\ne\emptyset$.
  
  To verify this last condition, note that, for each $n$, there exists
  $x_n\in D'$ such that $g_n(x_n)\in D'$, by the definition of $G$.
  Since $D'$ is compact, it may be assumed that $x_n\to x\in D'$,
  yielding $g_n(x_n)\to g(x)\in D'$ since $g_n\to g$ uniformly on $D$.
  Thus, $g(D')\cap D'\ne\emptyset$.
  
  Each $g_n:D\to g_n(D)\subset V$ is a homeomorphism. Thus, by
  Proposition~\ref{p:equicontinuous}, there are
  maps $h_n\in\HH$ defined on $V$ such that $h_n\circ g_n=\id$ on
  $g_n(D)$.  If $g$ fails to be a homeomorphism on $D$, then there are
  points $x,y\in D$ with $d_{i_0}(x,y) >0$ and $g(x)=g(y)=z$. The map
  $g$ is a homeomorphism around each point of $D$, as Theorem~\ref{t:closure}
  shows. Thus there are disjoint neighborhoods $O_x$ and $O_y$ of $x$
  and $y$, respectively, such that $g$ maps each of them
  homeomorphically onto a neighborhood $W$ of $z$. Since the sequences
  $g_n(x), g_n(y)$ both converge to $z$, they may be assumed to be
  contained in $W$.  Furthermore, perhaps further shrinking $W$, the
  restrictions $h_n|_W$ form an equicontinuous family, which therefore
  converges to a map $h$ which inverts $g$ on $W$. This situation
  contradicts the fact that $h_n(g_n(x))$ and $h_n(g_n(y))$ do not have
  the same limit.
\end{proof}

The following lemma is similar to the corresponding one in
Kellum~\cite{kellum93}.

\begin{lemma}
  The space $G$, endowed with the compact-open topology and the
  operations just described, is a locally compact local group.
\end{lemma}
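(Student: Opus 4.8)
The plan is to use the compactness of $G$ (established in the previous lemma), which immediately yields local compactness, and then to exhibit the local-group structure by specifying the operations $e$, $\cdot$, $'$ and producing one neighborhood $O\in\Psi G$ of $e$ satisfying conditions (4a)--(4g) and (5) of the definition. First I would set $e=\id|_D$, which lies in $G$ since $\id\in\HH\subset\ol\HH$ and $\id(D')\cap D'=D'\neq\emptyset$. The key structural fact to exploit is that every $g\in G$ is the restriction to $D$ of a \emph{unique} homeomorphism $\hat g$ whose domain contains $V\supset\ol D$, and that $\ol\HH$, being a pseudogroup, is closed under composition and inversion; hence I would define $g'=\hat g^{-1}|_D$ whenever $D\subset\hat g(V)$ (using that the condition $g(D')\cap D'\neq\emptyset$ is symmetric, so that $g'\in G$) and $g_1\cdot g_2=(\hat g_1\circ\hat g_2)|_D$ on those pairs for which $\hat g_2(\ol D)$ lies in the domain of $\hat g_1$ and $(\hat g_1\circ\hat g_2)(D')\cap D'\neq\emptyset$ (so that the product again lies in $G$). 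Condition (5) is immediate, since the supremum metric $d$ makes $G$ Hausdorff.

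For the neighborhood I would take $O=\{g\in G:d(g,e)<\delta\}$ with $\delta$ small. Because $\ol D$ is compact inside the open set $V$, for $\delta$ small and $g_2\in O$ the set $\hat g_2(\ol D)$ still lies in $V=\dom\hat g_1$, so the composite is defined on $\ol D$; and $\hat g_1\circ\hat g_2$ is then uniformly close to $\id$, so its image of $D'$ meets $D'$ (which has non-empty interior). This would give (4a), (4b), and—together with continuity of inversion below and a further shrinking of $\delta$—(4c) with $g'\in O$. Associativity (4d) is then automatic, since $\hat a,\hat b,\hat c$ are genuine homeomorphisms and function composition is associative wherever defined; restricting to $D$ gives $(a\cdot b)\cdot c=a\cdot(b\cdot c)$. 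The unit and inverse laws (4e) likewise follow from the corresponding identities for the maps $\hat g$ on $D$.

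The substance of the argument is the continuity of the operations, (4f) and (4g), and this is where equicontinuity must be invoked. By the isometrization Theorem~\ref{t:isometrization} the elements of $\ol\HH$, hence the extensions $\hat g$, are local isometries for $d_{i_0}$ near $\ol D$, so both $\{\hat g:g\in G\}$ and the family of inverses are equicontinuous. For (4f), if $g_1^n\to g_1$ and $g_2^n\to g_2$ I would estimate
$$
d_{i_0}\big(\hat g_1^n(\hat g_2^n(x)),\hat g_1(\hat g_2(x))\big)\le d_{i_0}\big(\hat g_1^n(\hat g_2^n(x)),\hat g_1^n(\hat g_2(x))\big)+d_{i_0}\big(\hat g_1^n(\hat g_2(x)),\hat g_1(\hat g_2(x))\big),
$$
bounding the first term by equicontinuity of $\{\hat g_1^n\}$ together with $g_2^n\to g_2$, and the second by $g_1^n\to g_1$, uniformly in $x\in D$; (4g) follows similarly from equicontinuity of the inverse family. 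I expect the main obstacle to be exactly the domain-and-range bookkeeping: one must check that for $g$ near $e$ the composites and inverses really are defined on all of $D$ and that the outcomes satisfy the defining $D'$-condition, so as to land back in $G$. The compactness of $\ol D$ inside $V$, the containment $h(D')\subset K\subset D$ built into the choice of $D'$, and the equicontinuity provided by Theorem~\ref{t:isometrization} are precisely the ingredients that make this bookkeeping succeed.
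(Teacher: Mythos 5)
Your construction of the operations is essentially the paper's, with one variation that creates a small well-definedness problem: you set $g_1\cdot g_2=(\hat g_1\circ\hat g_2)|_D$, but the extension $\hat g_1$ is only unique on the connected set $D$ (quasi-analyticity plus connectedness of $D$ give uniqueness there; nothing forces $V$ to be connected), while $\hat g_2(D)$ need not be contained in $D$ — only in $V$. So a priori $(\hat g_1\circ\hat g_2)|_D$ depends on the chosen extension $\hat g_1$ at points $x\in D$ with $\hat g_2(x)\in V\sm D$. This is repairable: two candidate composites agree on the interior of $D'$ (since $g_2(D')\subset K\subset D$), both lie in $\ol{\HH}$ and are defined on the connected set $D$, so quasi-analyticity of $\ol{\HH}$ forces them to agree on $D$. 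The paper avoids the issue by defining $g_1\cdot g_2$ as the unique element of $\ol{\HH}_D$ extending $(g_1\circ g_2)|_{D'}$.

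The genuine gap is in your continuity argument for (4f)--(4g). In your displayed estimate the second term is $d_{i_0}\bigl(\hat g_1^n(\hat g_2(x)),\hat g_1(\hat g_2(x))\bigr)$, where $\hat g_2(x)$ ranges over $\hat g_2(D)\subset V$, which is \emph{not} contained in $\ol D$; but $g_1^n\to g_1$ in the metric $d$ controls the maps only on $D$. Equicontinuity and isometrization cannot bridge this: for an equicontinuous — even isometric — pseudogroup, uniform closeness to a given map on a subdomain does not propagate to closeness on a larger domain. Example~\ref{ex:spheres} is exactly such a failure: suitable powers of the double rotation converge to the identity on a piece of one sphere while rotating arbitrarily on the other, all maps being isometries. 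The propagation you need — uniform convergence on a small open set implies uniform convergence on a larger one, i.e., that the restriction maps $\rho^V_W$ are homeomorphisms for the compact-open topologies — is, by Lemma~\ref{l:olHH quasi-effective} and Corollary~\ref{c:olHH quasi-analytic}, equivalent to the hypothesis that $\ol{\HH}$ is quasi-analytic, and this is precisely the hypothesis your continuity argument never invokes (you use quasi-analyticity only for uniqueness of extensions). The paper's proof proceeds by noting that the composites converge where your estimate does work, namely on $D'$ (since $g_2(D')\subset K\subset D$), and then invoking Corollary~\ref{c:olHH quasi-analytic} to upgrade this to convergence in the topology of $G$, the supremum metric over $D$. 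As the Remark following the lemma stresses, quasi-analyticity of $\ol{\HH}$ is the crux here; a proof that seems to get continuity from equicontinuity alone, as yours does, would apply verbatim to situations like Example~\ref{ex:spheres} where the conclusion's proof method breaks down, so the estimate cannot be completed as written.
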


\begin{proof}
  Let $g_1,g_2$ be two elements of $G$. Then the composition $g_1\circ
  g_2$ is defined on $D'$ because $g_1(D')\subset D$. Therefore there
  exists $h\in \ol{\HH}_D$ which extends $g_1\circ g_2$. By
  quasi-analyticity of $\ol{\HH}$, this extension is unique and thus
  it defines a map $(g_1,g_2) \mapsto g_1\cdot g_2$ from $G\times G$
  into $\ol{\HH}$. 
%
%
%
%
  If $g_1,g_2$ are sufficiently close to the identity of $D$ in the
  compact open topology of $C(D,Z)$, then also $g_1\cdot g_2\in G$.
  
  The existence of a unique identity element $e$ for $G$, as well as
  the existence of an inverse operation on $G$,  is
  proved analogously.
  
  Finally, by Corollary~\ref{c:olHH quasi-analytic} and the
  quasi-analyticity of $\ol{\HH}$, it is easy to see that the local
  group multiplication and inverse map are continuous with the compact
  open topology on $G$.
\end{proof}

%
%
%
%
%
%
%

\begin{rems}
  The quasi-analyticity for $\ol{\HH}$ was used in this proof.  Note
  that it would be needed even to prove, in a similar way, that
  $\Gamma$ is a local group. The final section of the paper discusses
  the necessity of this condition in some more detail.

  By Theorem~\ref{t:isometrization}, we can assume that all elements
  of $G$ are isometries with respect to $d_{i_0}$. Then it easily
  follows that the above distance $d$ on $G$ is left invariant.
\end{rems}

The following lemma follows easily; \textit{cf.} \cite{kellum93}.

\begin{lemma}
  The map $G\times D'\to D$ defined by $(g,x)\mapsto g(x)$ makes $G$
  into a local group of transformations on $D'\subset D$.
\end{lemma}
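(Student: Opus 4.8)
The plan is to verify directly the two axioms in the definition of a local transformation group, after first checking that the evaluation map $G\times D'\to D$ is well defined and continuous. Well-definedness I would read off from the construction of $G$: the set $D'$ was chosen, using the strong equicontinuity of $\ol{\HH}$ and Proposition~\ref{p:A,B}, precisely so that every translate $g(D')$ with $g\in G=\ol{\HH}_{DD'}$ lies in a fixed compact subset $K\subset D$. Hence $g(x)\in K\subset D$ for all $g\in G$ and $x\in D'$, so the assignment $(g,x)\mapsto g(x)$ indeed takes values in $D$, and every $g\in G$ is a genuine homeomorphism defined on all of $D$ (in fact on $\ol D$).

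For continuity I would use the description of the topology of $G$ by the supremum metric $d(g_1,g_2)=\sup_{x\in D}d_{i_0}(g_1(x),g_2(x))$ recorded above. Since $\ol D\subset V\subset Z_{i_0}$ and every image $g(x)$ lies in $K\subset Z_{i_0}$, the metric $d_{i_0}$ is available at all the relevant points. Given $g\in G$, $x\in D'$ and $\epsilon>0$, the triangle inequality gives $d_{i_0}(g'(x'),g(x))\le d(g',g)+d_{i_0}(g(x'),g(x))$; the first summand is controlled by closeness of $g'$ to $g$ in $G$, and the second by continuity of the single homeomorphism $g$ on $D$. This yields joint continuity. (Alternatively one may invoke the standard continuity of the evaluation map $C_{\text{\rm c-o}}(D,Z)\times D\to Z$, valid because the open subset $D$ of the locally compact Hausdorff space $Z$ is itself locally compact Hausdorff.)

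The identity axiom $e\,x=x$ is immediate, since the identity element of the local group $G$ is the identity map $\id_D$, so $e(x)=x$ for every $x\in D'$. The compatibility axiom $g_1(g_2x)=(g_1g_2)x$ I would obtain essentially as a restatement of how the multiplication on $G$ was defined in the preceding lemma: there, $g_1\cdot g_2$ is the unique element of $\ol{\HH}_D$ extending the composite $g_1\circ g_2$ on $D'$, uniqueness coming from quasi-analyticity of $\ol{\HH}$ together with connectedness of $D$. Thus $(g_1\cdot g_2)|_{D'}=(g_1\circ g_2)|_{D'}$, which says exactly $(g_1g_2)(x)=g_1(g_2(x))$ for all $x\in D'$, and in particular the identity holds whenever $g_1\cdot g_2$ is defined in $G$, so that both sides make sense.

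The only point I expect to require a little care — the closest thing to an obstacle — is matching the two natural readings of the left-hand side. The point $g_2(x)$ lies in $D$ but need not lie in $D'$, so $g_1(g_2x)$ must be read as the honest composition of maps, each $g_i$ being a homeomorphism defined on all of $D$, rather than as a second application of the partial action. With this reading the equality above is immediate, and no domain difficulties arise because all translates remain inside the compact set $K\subset D$ on which every element of $G$ is defined.
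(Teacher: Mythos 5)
Your proof is correct, and it supplies precisely the routine verification that the paper omits: the paper states only that this lemma ``follows easily'' (citing Kellum) and gives no argument, and your checks---well-definedness via the choice of $D'$ with all translates in $K\subset D$, continuity via the supremum metric describing the compact-open topology, the identity axiom, compatibility read off from the definition of $g_1\cdot g_2$ as the unique quasi-analytic extension of $g_1\circ g_2$, and the observation that $g_1(g_2x)$ must be interpreted as genuine composition since $g_2(x)$ may leave $D'$---are exactly the intended details. In particular your last point is the right reading of the axiom $g_1(g_2x)=(g_1g_2)x$ ``provided both sides are defined,'' and no gap remains.
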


Let $\Gamma=\HH\cap G$, which is a finitely generated dense sub-local
group of $G$. The following is a direct consequence of the minimality
of $\HH$.

\begin{lemma}\label{l:Gamma}
  $\HH$ is equivalent to the pseudogroup on any non-empty open subset
  of $D'$ generated by the local action of $\Gamma$ on $D'\subset Z$.
\end{lemma}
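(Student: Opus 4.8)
The plan is to use minimality to reduce the statement to an equality of pseudogroups on a small plaque, and then to match the germs of $\HH$ with germs of the $\Gamma$-action by means of the extension property of Proposition~\ref{p:equicontinuous}. Write $W$ for an arbitrary non-empty open subset of $\intr D'$, and let $\GG$ denote the pseudogroup on $W$ generated by the local action $(\gamma,x)\mapsto\gamma(x)$ of $\Gamma$. Since $\HH$ is transitive, compactly generated and strongly equicontinuous, Theorem~\ref{t:minimal} shows that $\HH$ is minimal; hence every $\HH$-orbit is dense and $W$ meets every orbit. By the basic equivalence principle recalled in Section~2 (a pseudogroup is equivalent to its restriction to any open set meeting every orbit), $\HH$ is equivalent to the restriction $\HH|_W$. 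It therefore suffices to prove $\HH|_W=\GG$, since this yields $\HH\sim\HH|_W=\GG$ at once.

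One inclusion is immediate. Each $\gamma\in\Gamma=\HH\cap G$ is the restriction to $D$ of an element of $\HH$, so every generator $x\mapsto\gamma(x)$ of $\GG$ is a restriction of an element of $\HH$; consequently $\GG\subset\HH|_W$.

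For the reverse inclusion I would argue germ by germ. Let $h\in\HH$ have open domain contained in $W$ with $h(\dom h)\subset W$, and fix $x\in\dom h$. Recall that $D'\subset D\subset V$ and that $D$ was chosen inside $U$ in the construction preceding the lemma; hence $x,h(x)\in W\subset D'\subset U$ and $x\in V$. Then Proposition~\ref{p:equicontinuous} provides an extension $\tilde h\in\HH$ whose domain contains $V$, and hence $D$, and by quasi-analyticity together with the connectedness of $D$ this extension is unique on $D$. Put $\gamma=\tilde h|_D$. Then $\gamma(x)=h(x)\in D'$, so $\gamma(D')\cap D'\ne\emptyset$; moreover the choice of $D'$ via Proposition~\ref{p:A,B} keeps $\gamma(D')$ inside the fixed compact set $K\subset D$. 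Thus $\gamma\in\ol{\HH}_{DD'}=G$, and since $\gamma\in\HH$ we obtain $\gamma\in\HH\cap G=\Gamma$. Because $\tilde h$ extends $h$, the germ of $h$ at $x$ coincides with the germ at $x$ of the $\Gamma$-action map associated with $\gamma$. As $x$ was arbitrary, $h$ agrees locally about each point of its domain with a restriction of a generator of $\GG$, whence $h\in\GG$ because pseudogroups are closed under combination. This gives $\HH|_W\subset\GG$, and combined with the previous paragraph, $\HH|_W=\GG$.

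The hard part will be exactly this reverse inclusion, namely certifying that every germ of $\HH$ carrying one point of $D'$ to another is realised by an element of $\Gamma$. This rests on three ingredients acting together: Proposition~\ref{p:equicontinuous}, to produce a uniform extension defined over all of $V\supset D$; quasi-analyticity, to make that extension unique on the connected set $D$ and so identify germs unambiguously; and the definition of $G$ together with the choice of $D'$ through Proposition~\ref{p:A,B}, to keep the relevant translates inside $D$ so that the extension indeed lands in $G$ and hence in $\Gamma$. It is worth noting that the density of $\Gamma$ in $G$ is not what is used here; what matters is that $\Gamma$ already captures all such germs. Once $\HH|_W=\GG$ is in hand, the desired equivalence follows immediately from minimality, as noted above.
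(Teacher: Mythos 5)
Your proposal is correct and follows exactly the route the paper intends: the paper states the lemma without proof as ``a direct consequence of the minimality of $\HH$,'' and your argument simply fleshes this out, using Theorem~\ref{t:minimal} plus Haefliger's basic equivalence to reduce to $\HH|_W=\GG$, and then the extension property of Proposition~\ref{p:equicontinuous} together with quasi-analyticity on the connected set $D$ (precisely the construction preceding the lemma) to realize every germ of $\HH|_W$ by an element of $\Gamma=\HH\cap G$. Your side remark that density of $\Gamma$ in $G$ plays no role here is also accurate.
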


Let $x_0$ be a point in the interior of $D'$, which will remain fixed
from now on. Note that, by construction, all elements of $G$ are
defined at $x_0$.  Let $\phi:G\to D$ be the orbit map given by
$\phi(g)=g(x_0)$. This map is continuous because the action is
continuous.

\begin{lemma}
  The image of the orbit map $\phi$ contains a neighborhood of $x_0$.
\end{lemma}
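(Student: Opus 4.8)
The plan is to show directly that every point $y$ lying close enough to $x_0$ is of the form $g(x_0)$ for some $g\in G=\ol{\HH}_{DD'}$, so that $\phi(G)$ contains a neighborhood of $x_0$ in $D$. The starting point is minimality: by Theorem~\ref{t:minimal} every $\HH$-orbit is dense, and since the orbits of $\ol{\HH}$ are the closures of those of $\HH$ (Theorem~\ref{t:closure}), the pseudogroup $\ol{\HH}$ has a single orbit, equal to $Z$, as was already noted. Hence for each $y\in D$ there is some $h\in\ol{\HH}$ defined on a neighborhood of $x_0$ with $h(x_0)=y$. The whole problem thus reduces to arranging, for $y$ near $x_0$, that such an $h$ can be promoted to an element of $G$.

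The heart of the argument, and the step I expect to be the main obstacle, is extending an $h$ that is a priori defined only near $x_0$ to an element of $\ol{\HH}$ that is a homeomorphism on all of $D$ and still sends $x_0$ to $y$. First I would arrange (shrinking $D'$ if necessary) that $x_0\in U\cap V$, so that $y\in U$ whenever $y$ is close to $x_0$, by openness of $U$. Next I would realize $h$ near $x_0$ as a uniform limit of elements $f_k\in S\subset\HH$, using the description of $\ol{\HH}$ in Theorem~\ref{t:closure}; then $f_k(x_0)\to y$, so $f_k(x_0)\in U$ for large $k$. Applying Proposition~\ref{p:equicontinuous} at the point $x_0$ extends each $f_k$ to an element $\tilde f_k\in\HH$ whose domain contains $V$, hence contains $D$. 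Strong equicontinuity makes $\{\tilde f_k|_D\}$ an equicontinuous family, so after passing to a subsequence it converges uniformly on $D$; exactly as in the proof that $G$ is compact, the limit $\bar h$ lies in $\widetilde{\HH}$ and is a homeomorphism on $D$, whence $\bar h\in\ol{\HH}$ with $\dom\bar h\supset D$. Since each $\tilde f_k$ agrees with $f_k$ near $x_0$, the limit satisfies $\bar h(x_0)=h(x_0)=y$.

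It then remains to check the defining condition $\bar h(D')\cap D'\ne\emptyset$ of membership in $G$. Because $x_0$ lies in the interior of $D'$ and $\bar h(x_0)=y$, we have $y\in\bar h(D')$; and $y\in D'$ as soon as $y$ lies in that interior neighborhood of $x_0$. Thus $y\in\bar h(D')\cap D'$, so $\bar h|_D\in\ol{\HH}_{DD'}=G$ and $\phi(\bar h|_D)=y$. Consequently every $y$ in a sufficiently small neighborhood of $x_0$ belongs to $\phi(G)$, which is what we want.

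The delicate point throughout is the extension-and-limit step, where strong equicontinuity is essential: it is what makes Proposition~\ref{p:equicontinuous} extend generators uniformly to a domain containing $D$, and what makes the limiting family equicontinuous, while quasi-analyticity guarantees the extension is unambiguous on the connected set $D$. As a variant that sidesteps transitivity of $\ol{\HH}$, the same extension carried out inside $\HH$ shows that $\Gamma(x_0)$ already contains every point of the dense orbit $\HH(x_0)$ lying near $x_0$, so $\ol{\Gamma(x_0)}$ contains a neighborhood of $x_0$; since $\Gamma\subset G$ and $\phi(G)$ is closed by compactness of $G$, this neighborhood is contained in $\phi(G)$.
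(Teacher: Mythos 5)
Your proof is correct and takes essentially the same route as the paper's: the paper compresses into a single sentence (``Proposition~\ref{p:equicontinuous} and Theorem~\ref{t:closure} show that, given $x\in D'$, there exists $h\in\ol{\HH}$ with $\dom h=D$ such that $h(x_0)=x$'') exactly the approximate--extend--pass-to-the-limit argument you spell out, and then concludes $h\in G$ from $x,x_0\in D'$ just as you do. Your expanded version, modeled on the compactness lemma for $G$, is precisely the intended use of minimality, Proposition~\ref{p:equicontinuous} and Theorem~\ref{t:closure}.
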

\begin{proof}
  $\HH$ is minimal by Theorem~\ref{t:minimal}, and thus the space $Z$
  is locally homogeneous with respect to the pseudogroup $\ol{\HH}$ by
  Theorem~\ref{t:closure}. More precisely,
  Proposition~\ref{p:equicontinuous} and Theorem~\ref{t:closure} show
  that, given $x\in D'$, there exists $h\in\overline{\HH}$ with domain
  $\dom h= D$ such that $h(x_0)=x$. Since both $x,x_0\in D'$, it
  follows that $h\in G$. The statement follows immediately from this.
\end{proof}

Let $G_0$ denote the collection of elements $g\in G$ such that
$g(x_0)=x_0$.

\begin{lemma}\label{l:no-normal-subgroup}
  The set $G_0$ is a compact subgroup of $G$.
\end{lemma}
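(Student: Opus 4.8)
The plan is to show that $G_0$ is closed in $G$ (hence compact, since $G$ is compact by the earlier lemma) and that it is closed under the local group operations, so that it qualifies as a compact subgroup. First I would verify that $G_0$ is a subgroup in the sense appropriate to local groups. The identity $e=\id_D$ clearly fixes $x_0$, so $e\in G_0$. If $g_1,g_2\in G_0$ and the product $g_1\cdot g_2$ is defined in $G$, then since the group operation was defined by extending and restricting the composition $g_1\circ g_2$, and since both maps fix $x_0$, we get $(g_1\cdot g_2)(x_0)=g_1(g_2(x_0))=g_1(x_0)=x_0$; thus $g_1\cdot g_2\in G_0$ whenever it lies in $G$. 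Similarly, for $g\in G_0$, its inverse $g'$ satisfies $g'\circ g=\id$ near $x_0$, so $g'(x_0)=x_0$ and $g'\in G_0$. Hence $G_0$ is closed under the operations inherited from $G$.

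The key point is compactness, and here I would use that $G_0$ is the preimage of the single point $x_0$ under the orbit map $\phi\colon G\to D$, $\phi(g)=g(x_0)$. That is, $G_0=\phi^{-1}(x_0)$. Since $\phi$ is continuous (as recorded just before the statement, the action $G\times D'\to D$ is continuous) and the singleton $\{x_0\}$ is closed in the Hausdorff space $D\subset Z$, the set $G_0=\phi^{-1}(\{x_0\})$ is closed in $G$. A closed subset of a compact space is compact, so $G_0$ is compact. This is clean precisely because $G$ has already been established to be compact.

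The remaining care is purely definitional: one must check that $G_0$ is genuinely a \emph{subgroup} of the local group $G$ in the technical sense used in the paper, i.e.\ that $(G_0,W)\in\Upsilon G$ for a suitable $W\in\Psi G$, so that $G_0$ is a sub-local group, and moreover that it is an honest subgroup (the product of any pair of its elements that is defined stays inside $G_0$). The computations above give $G_0\cdot G_0\subset G_0$ and $G_0'=G_0$ wherever the operations are defined; combined with $e\in G_0$ and the fact that $G_0$, being compact, lies in $\Phi(G,n)$ for the relevant $n$ on a small enough neighborhood, this yields the sub-local group structure. Because $G_0$ is compact and the operations are globally defined on it via the ambient composition of maps fixing $x_0$, it is in fact a compact \emph{topological group} under these operations.

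The step I expect to be the main (though still mild) obstacle is bookkeeping about where the product $g_1\cdot g_2$ is actually defined as an element of $G$: the local group product on $G$ was only defined for $g_1,g_2$ sufficiently close to the identity, so strictly the statement ``$G_0$ is a subgroup'' must be read in the local-group sense, with the neighborhood $W$ chosen small enough that products of pairs in $W\cap G_0$ land in $G$. Once one fixes such a $W\in\Psi G\cap\Phi(G,2)$, the closure of $G_0$ under the defined operations together with the compactness from $G_0=\phi^{-1}(x_0)$ completes the proof.
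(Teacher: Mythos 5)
Your proposal is correct and follows essentially the same route as the paper: compactness via $G_0=\phi^{-1}(x_0)$ being closed in the compact Hausdorff space $G$, and stability under product and inverse by evaluating at $x_0$. One remark, though, on your final paragraph, which retreats to reading ``$G_0$ is a subgroup'' only in the local-group sense, on a small neighborhood $W\in\Psi G\cap\Phi(G,2)$: this hedge is unnecessary, and it is precisely the point of the ``more precisely'' sentence in the paper's own proof that no smallness assumption is needed on $G_0$. Membership in $G=\ol{\HH}_{DD'}$ requires only that $h\in\ol{\HH}_D$ and $h(D')\cap D'\neq\emptyset$. For $g_1,g_2\in G_0$, the composite $g_1\circ g_2$ is defined on $D'$, extends uniquely to an element of $\ol{\HH}_D$ by quasi-analyticity of $\ol{\HH}$, and fixes $x_0\in D'$, so the membership condition $(g_1\cdot g_2)(D')\cap D'\neq\emptyset$ holds automatically. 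Hence the product is defined in $G$ for \emph{every} pair of elements of $G_0$ (and similarly every inverse lies in $G$), so $G_0$ is an honest subgroup in Jacoby's sense ($e\in G_0$, $G_0\cdot G_0=G_0$, $G_0'=G_0$), indeed a compact topological group --- which you yourself observe in your third paragraph before muddying it in the fourth.

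This globality is not cosmetic. The later steps of the argument --- the coset space $G/G_0$ in Lemma~\ref{l:psi}, and the averaging over $G_0$ that produces the $G_0$-right invariant metric in Example~\ref{e:lie} --- use that $G_0$ is a genuine compact subgroup, not merely a sub-local group with products defined near the identity. So you should drop the restriction to $W\cap G_0$ and instead note, as above, why the defining condition of $G$ is satisfied by all products of point stabilizers; with that correction your argument coincides with the paper's.
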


\begin{proof}
  First, $G_0$ is compact because, being the stabilizer of a point, it
  is a closed subset of $G$ and $G$ is a compact hausdorff space.

  Second, it follows from the definitions of $G$ and of its group
  multiplication that the product of two elements of $G_0$ is defined
  and belongs to $G_0$, and likewise the inverse of every element.
  More precisely, if $g_1,g_2\in G_0$, then $g_1\circ g_2$ is an
  element of $\ol{\HH}$ which fixes $x_0$, hence $g_1\circ g_2(D')\cap
  D'\ne \emptyset$.
\end{proof}

In the special case of the group $G_0$ which stabilizes $x_0$
considered here, the equivalence relation $\sim$ on $G$ used to define
a representative coset space of $G_0$ can also be defined as $h\sim g$
if and only if $h(x_0)=g(x_0)$.

\begin{lemma}\label{l:psi}
  The orbit map $\phi:G\to Z$ induces a map $\psi:G/G_0\to Z$ which is
  a homeomorphism of a neighborhood of the identity class in $G/G_0$
  onto a neighborhood of $x_0$ in $Z$. 
\end{lemma}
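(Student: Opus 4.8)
The plan is to prove Lemma~\ref{l:psi} by showing that the equivalence relation defining $G/G_0$ coincides exactly with the fibers of the orbit map $\phi$, so that $\phi$ factors through $G/G_0$ as a continuous injection $\psi$, and then upgrading this injection to a local homeomorphism using compactness and the preceding lemma on the image of $\phi$. First I would observe that, by the remark preceding this lemma, the coset equivalence $h\sim g$ (i.e.\ $g'\cdot h\in G_0$) is equivalent to $h(x_0)=g(x_0)$, which is precisely the condition $\phi(h)=\phi(g)$. Hence $\phi$ is constant on cosets and descends to a well-defined injective map $\psi:G/G_0\to Z$ on a representative coset space $G/(G_0,U)$ for a suitable $U\in\Pi G$; the map $\psi$ is continuous because $\phi=\psi\circ T$ is continuous and $T:U^2\to G/(G_0,U)$ is an open surjection.

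Next I would establish that $\psi$ is a homeomorphism onto a neighborhood of $x_0$. Continuity and injectivity are in hand; for the inverse I would use that $G$ is compact (the first lemma of this section) together with the compactness of $G_0$: the quotient $G/(G_0,U)$ is locally compact and Hausdorff, so a neighborhood of the identity class has compact closure. Restricting $\psi$ to such a compact neighborhood, it becomes a continuous injection from a compact space into the Hausdorff space $Z$, hence a homeomorphism onto its image. By the lemma asserting that $\im\phi$ contains a neighborhood of $x_0$, the image $\psi(G/G_0)=\phi(G)$ contains a neighborhood of $x_0$; combined with invariance-of-domain–type reasoning via local homogeneity, the image of a small enough neighborhood of the identity class is itself a neighborhood of $x_0$. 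This yields that $\psi$ carries a neighborhood of the identity class homeomorphically onto a neighborhood of $x_0$.

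The main obstacle I anticipate is the bookkeeping around the local-group quotient $G/(G_0,U)$: one must verify $(G_0,U)\in\Pi G$ for an appropriate $U$ so that the representative coset space and its projection $T$ are available, and check that the germ-level identifications (the coset space being well defined only up to local homeomorphism) are compatible with the concrete fibers of $\phi$. The geometric content is elementary once the equivalence relations are matched, but the delicate point is ensuring that passing to the quotient via Jacoby's machinery produces a genuine neighborhood on which $\psi$ is defined and open, rather than merely a germ. I would handle this by working throughout on the fixed relatively compact pieces $D'\subset D\subset V$, where all elements of $G$ are simultaneously defined and where the orbit map and the projection $T$ can be compared directly, exploiting that $G$ is compact so that the local action is in effect a genuine (not merely local) transitive action on the neighborhood $\phi(G)$ of $x_0$.
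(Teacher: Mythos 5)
Your three-step skeleton is right, and the first two steps are sound: the identification of the coset relation with the fibers of $\phi$ is exactly the paragraph preceding the lemma, the bookkeeping $(G_0,U)\in\Pi G$ works out since $G_0$ is a closed (compact) subgroup, and a continuous injection from a compact neighborhood of the identity class in the Hausdorff space $G/(G_0,U)$ into $Z$ is indeed an embedding. The genuine gap is in the final step --- the only one carrying real content. ``Invariance-of-domain--type reasoning'' is not available here: at this point $Z$ is only known to be a finite-dimensional, locally compact, locally connected Polish space; that it is a manifold is the conclusion of the whole theorem, so invoking invariance of domain is circular, and no such principle holds for spaces of this generality (an embedded compact set can have empty interior in $Z$ regardless of dimensions). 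Knowing that $\psi$ embeds a compact neighborhood $K$ of the identity class and that $\phi(G)$ contains a neighborhood of $x_0$ does not make $\psi(K)$ a neighborhood of $x_0$: the preceding lemma gives transitivity of the \emph{whole} compact $G$, while openness must be produced for arbitrarily small neighborhoods of $e$, and passing from the former to the latter is precisely the nontrivial point. (A related slip: $\psi(G/G_0)=\phi(G)$ is not quite right, since the representative coset space is built on $U^2$ only, so the relevant image is $\phi(U^2)$ --- one more reason localization cannot be waved through.)

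What fills the hole is a Baire-category/pigeonhole argument using compactness of $G$, which is in effect the unproved assertion recorded in Section 1 (a locally transitive local action induces a local homeomorphism $G/(H,U)\to X$ at $x$) that the paper's own one-line proof cites together with the fiber identification: choose a small symmetric $N\in\Psi G$ with $\ol{N}\cdot\ol{N}$ defined; cover the compact $G$ by finitely many translates $g_i\cdot N$; since $\phi(G)\supset\intr D'$, Baire category forces some compact set $g_i(\phi(\ol{N}))$ to have nonempty interior in $Z$; because $g_i$ and each $n\in\ol{N}$ lie in $\ol{\HH}$ and hence act by homeomorphisms between open subsets of $Z$, an interior point $n(x_0)$ of $\phi(\ol{N})$ can be transported back by $n'$ to show that $\phi(\ol{N}\cdot\ol{N})$ is a neighborhood of $x_0$; this yields openness of $\psi$ at the identity class, and with your steps (i)--(ii) the lemma follows. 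Your closing remark that compactness of $G$ makes the action ``in effect genuinely transitive'' gestures at the right ingredient, but the deduction of openness from it is exactly the step your proposal leaves unwritten, and it cannot be replaced by invariance of domain.
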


\begin{proof}
  This follows directly from the preceding discussion on coset spaces
  and the finite dimensionality of $Z$.
\end{proof}

\begin{cor}\label{c:Gamma}
  $\HH$ is equivalent to the pseudogroup induced by the canonical local
  action of some neighborhood of the identity in $\Gamma$ on some
  neighborhood of the identity class in $G/G_0$.
\end{cor}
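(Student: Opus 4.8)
The plan is to glue the two reductions already in hand --- Lemma~\ref{l:Gamma} on the pseudogroup side and Lemma~\ref{l:psi} on the coset-space side --- by checking that the homeomorphism $\psi$ intertwines the two local $\Gamma$-actions.

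First I would invoke Lemma~\ref{l:Gamma}: $\HH$ is equivalent to the pseudogroup generated by the local action of $\Gamma$ on any non-empty open subset of $D'$, so I may work on a small connected open neighborhood $O$ of the fixed point $x_0$ (which lies in the interior of $D'$). On the other side stands the canonical local action of a neighborhood of the identity in $\Gamma\subset G$ on a neighborhood of the identity class in $G/G_0$, namely $(\gamma,T(g))\mapsto T(\gamma\cdot g)$; this generates a pseudogroup on that neighborhood.

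The heart of the argument is the equivariance of $\psi$. By Lemma~\ref{l:psi}, $\psi$ carries a neighborhood $N$ of the identity class in $G/G_0$ homeomorphically onto a neighborhood of $x_0$, and by construction $\psi(T(g))=\phi(g)=g(x_0)$. For $\gamma\in\Gamma$ and $T(g)\in N$ with $\gamma\cdot g$ defined in $G$, the local-group product in $G$ is the unique extension of $\gamma\circ g$ inside $\ol{\HH}$ (uniqueness coming from quasi-analyticity), so $(\gamma\cdot g)(x_0)=\gamma(g(x_0))$ wherever the right-hand side is defined, giving
$$\psi\bigl(T(\gamma\cdot g)\bigr)=\gamma\bigl(\psi(T(g))\bigr).$$
Thus $\psi$ conjugates left translation by $\gamma$ on $G/G_0$ to the action of $\gamma$ as an element of $\HH$ near $x_0$. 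I would shrink $N$, and correspondingly $O=\psi(N)$, so that all the relevant compositions are simultaneously defined; this is possible by Proposition~\ref{p:equicontinuous} and the continuity of the action.

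Finally I would assemble the pieces. The restriction $\psi\colon N\to O$ is a single homeomorphism conjugating the generators of the $\Gamma$-action on $G/G_0$ to the generators of the $\Gamma$-action on $O\subset Z$, hence it is an equivalence of the corresponding pseudogroups in Haefliger's sense; composing with the equivalence of Lemma~\ref{l:Gamma} shows that $\HH$ is equivalent to the pseudogroup induced by the canonical local action of $\Gamma$ on $G/G_0$. I expect the only delicate point to be the bookkeeping in the equivariance step --- arranging that the domains of $g$, of $\gamma$, and of the product $\gamma\cdot g$ all match on a common small neighborhood so that both the identity $(\gamma\cdot g)(x_0)=\gamma(g(x_0))$ and the conjugation of generators hold at once --- which is exactly where the uniqueness of extensions and the uniformity of Proposition~\ref{p:equicontinuous} do the work.
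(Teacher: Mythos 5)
Your proposal is correct and takes essentially the same approach as the paper, whose entire proof is to cite Lemmas~\ref{l:Gamma} and~\ref{l:psi}; your equivariance computation $\psi(T(\gamma\cdot g))=\gamma(\psi(T(g)))$ is exactly the implicit content of combining those two lemmas, since by the definition of the product in $G$ the element $\gamma\cdot g$ restricts to $\gamma\circ g$ on $D'$. The domain bookkeeping you flag is handled in the paper by the same tools you name (uniqueness of extensions via quasi-analyticity and Proposition~\ref{p:equicontinuous}), so nothing further is needed.
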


\begin{proof}
  This follows from Lemmas~\ref{l:Gamma} and~\ref{l:psi}.
\end{proof}

\begin{cor}\label{c:fin-dim}
  $G/G_0$ is finite dimensional.
\end{cor}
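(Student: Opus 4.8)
The plan is to read off the finite dimensionality of $G/G_0$ directly from Lemma~\ref{l:psi}, simply transporting the hypothesis that $Z$ is finite dimensional across the local homeomorphism $\psi$. No new structural work is needed: the substantive content was already packaged into the orbit map $\phi$ and the passage to the coset space.

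First I would record the point-set prerequisites so that dimension theory is available. The group $G$ carries the supremum metric $d$ and is compact, and $G_0$ is a compact subgroup by Lemma~\ref{l:no-normal-subgroup}; hence any representative coset space $G/(G_0,U)$ is separable and metrizable, which is exactly the setting for the Hurewicz--Wallman dimension theory used elsewhere in the paper. Next, by Lemma~\ref{l:psi} the induced map $\psi\colon G/G_0\to Z$ restricts to a homeomorphism of an open neighborhood $W$ of the identity class onto an open neighborhood $\psi(W)$ of $x_0$ in $Z$. Since $\psi(W)$ is a subspace of the finite dimensional space $Z$, the subspace theorem \cite[Theorem~III~1]{hurewicz} gives $\dim\psi(W)\le\dim Z<\infty$, and therefore $\dim W<\infty$ as well.

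Finally I would invoke the convention on coset germs introduced earlier: to say that $G/G_0$ is finite dimensional means that some representative $G/(G_0,U)$ is finite dimensional. Because all such representatives are locally homeomorphic near the identity class, I may shrink $U$ so that $G/(G_0,U)$ is homeomorphic to an open subset of $W$; such a representative is then finite dimensional, which is what is required. I expect essentially no obstacle here; the only points meriting a word of care are that dimension is monotone under passage to the subspace $\psi(W)\subset Z$, and that finite dimensionality of the germ $G/G_0$ is by definition a local statement at the identity class, matching exactly what the local homeomorphism $\psi$ provides.
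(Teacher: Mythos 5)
Your proposal is correct and follows exactly the paper's own argument, which deduces the corollary directly from Lemma~\ref{l:psi} and the finite dimensionality of $Z$; you merely spell out the routine details (monotonicity of dimension under subspaces and the germ convention for coset spaces) that the paper leaves implicit.
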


\begin{proof}
  This follows directly from Lemma~\ref{l:psi} and the finite
  dimensionality of $Z$.
\end{proof}
 
\begin{lemma}\label{l:nonormal}
  The group $G_0$ contains no non-trivial normal sub-local group of
  $G$.
\end{lemma}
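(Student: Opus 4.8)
The plan is to show that any normal sub-local group $K$ of $G$ contained in $G_0$ must be trivial, the mechanism being that normality propagates the point-stabilizing condition over the whole local orbit of $x_0$, after which quasi-analyticity forces triviality.

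First I would fix a pair $(K,U)\in\Delta G$ witnessing that $K$ is a normal sub-local group, and reduce the statement to showing that every $k\in K\cap U^4$ equals the identity $e$ of $G$. The key computation uses the defining property of $\Delta G$: for such a $k$ and for every $b\in U^2$ one has $b'\cdot(k\cdot b)\in K\subset G_0$. Since the multiplication of $G$ is the quasi-analytic extension of composition of the corresponding homeomorphisms, the element $b'\cdot(k\cdot b)$ coincides near $x_0$ with $b^{-1}\circ k\circ b$; its membership in $G_0$ means that it fixes $x_0$, which unwinds to $k(b(x_0))=b(x_0)$. Thus $k$ fixes $b(x_0)$ for every $b$ in the neighborhood $U^2$ of $e$.

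Next I would turn this pointwise information into an open statement. The orbit map $\phi(b)=b(x_0)$ factors through the open projection $G\to G/G_0$ followed by the map $\psi$ of Lemma~\ref{l:psi}, which is a homeomorphism of a neighborhood of the identity class onto a neighborhood of $x_0$. Hence the image under $\phi$ of any neighborhood of $e$ contained in $U^2$ already contains a neighborhood of $x_0$, and combining this with the previous step shows that $k$ restricts to the identity on an open neighborhood of $x_0$.

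Finally, quasi-analyticity does the real work. Since $k$ is the restriction to the connected set $D$ of an element of $\ol{\HH}$, and $\ol{\HH}$ is quasi-analytic, the subset of $D$ on which $k$ is locally the identity is open and, by quasi-analyticity, closed; it is nonempty by the previous paragraph, so it is all of $D$. Therefore $k=\id$ on $D$, that is $k=e$, and $K$ is trivial. The main obstacle I anticipate is purely bookkeeping: faithfully translating Jacoby's formal operations (the roles of $U^2$ and $U^4$, the prime for inversion, and the precise meaning of $(K,U)\in\Delta G$) into the composition of homeomorphisms near $x_0$, and verifying that restricting the orbit map to a small neighborhood of $e$ still covers a neighborhood of $x_0$; once these points are settled, the quasi-analyticity argument is immediate.
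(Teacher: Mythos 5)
Your proposal is correct and follows essentially the same route as the paper: the paper's proof likewise uses normality to write $n\,g(x_0)=g\,n'(x_0)=g(x_0)$ for all $g$ in a neighborhood of $e$, concludes via the orbit map that the element acts trivially on a neighborhood of $x_0$, and then invokes quasi-analyticity of $\ol{\HH}$ to force triviality. Your only additions are bookkeeping the paper leaves implicit: the explicit translation of the $\Delta G$ condition $b'\cdot(k\cdot b)\in K$ into $b^{-1}\circ k\circ b$ near $x_0$, and the open-closed argument on the connected set $D$ that unpacks the paper's terse ``because $\ol{\HH}$ is quasi-analytic.''
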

\begin{proof}
  If $N\subset G$ is a normal sub-local group contained in $G_0$ and
  $n\in N$, then for each $g$ in a suitable neighborhood of $e$ in $G$
  there is some $n'\in N$ so that
  $$
  n\,\phi(g)=n\,g(x_0)=g\,n'(x_0)=g(x_0)=\phi(g)\;.
  $$ 
  Thus $n$ acts trivially on a neighborhood of $x_0$ in $D'$. This is
  possible only if $n=e$, because $\ol{\HH}$ is quasi-analytic.
\end{proof}

\begin{lemma}
  The local group $G$ is finite dimensional.
\end{lemma}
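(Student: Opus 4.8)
The plan is to deduce the finite dimensionality of $G$ directly from Theorem~\ref{t:main-claim}, which is exactly the local-group incarnation of the dimension-reduction argument from \cite{mz}. The preceding lemmas have assembled precisely the data required as input, so the work here is mostly a matter of verifying hypotheses and then eliminating the normal subgroup that the theorem produces.

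First I would check that the hypotheses of Theorem~\ref{t:main-claim} hold. The local group $G$ is compact, hence locally compact, separable and metrizable. It acts on $D'\subset D$ via $(g,x)\mapsto g(x)$, and $D'$ is finite dimensional because it is a subspace of the finite dimensional space $Z$. The action is locally transitive at $x_0$ by the lemma asserting that $\im\phi$ contains a neighborhood of $x_0$. Finally, Lemma~\ref{l:psi}, together with the general correspondence between locally transitive local actions and coset spaces recalled before Theorem~\ref{t:main-claim}, provides some $(G_0,U)\in\Pi G$ for which the orbit map $\phi$ induces a local homeomorphism $G/(G_0,U)\to D'$ at $x_0$; here the role of the sub-local group $H$ is played by the stabilizer $G_0$.

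With these hypotheses in place, Theorem~\ref{t:main-claim} yields a connected normal subgroup $K$ of $G$ with $K\subset G_0$, $(K,U)\in\Pi G$, and $G/(K,U)$ finite dimensional. The crucial point is now that $K$ must be trivial: being a normal sub-local group of $G$ contained in $G_0$, Lemma~\ref{l:nonormal} forces $K=\{e\}$. Hence $G/(K,U)=G/(\{e\},U)$, and the natural projection $T:U^2\to G/(\{e\},U)$ is injective, since $T(a)=T(b)$ holds iff $a'\cdot b\in\{e\}$, that is, iff $a=b$; being a continuous open surjection, $T$ is therefore a homeomorphism onto its image. Thus $G/(\{e\},U)$ is locally homeomorphic to $G$, and the finite dimensionality of $G/(K,U)$ transfers to $G$, as desired.

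I expect the only delicate points to be the bookkeeping that identifies the stabilizer $G_0$ with the subgroup $H$ appearing in Theorem~\ref{t:main-claim}, and the verification that $(\{e\},U)\in\Pi G$ so that the trivial quotient recovers $G$ up to local homeomorphism. The genuinely hard analytic content, which proceeds through Jacoby's approximation theorem (Theorem~\ref{t:jacoby-approximated}) and the structure theory of locally compact local groups, is already fully encapsulated in Theorem~\ref{t:main-claim}, so no further use of that machinery is needed here.
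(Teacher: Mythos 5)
Your proof is correct and follows essentially the same route as the paper: apply Theorem~\ref{t:main-claim} to the local action of $G$ on $D'$ at $x_0$ (with $H=G_0$, using Corollary~\ref{c:fin-dim}/Lemma~\ref{l:psi} for finite dimensionality and local transitivity), then invoke Lemma~\ref{l:nonormal} to conclude that the resulting connected normal subgroup $K\subset G_0$ is trivial, so that $G$, being locally isomorphic to the finite dimensional $G/(K,U)$, is itself finite dimensional. Your extra verification of the hypotheses of Theorem~\ref{t:main-claim} and of the identification $G/(\{e\},U)\cong G$ locally is sound detail that the paper leaves implicit.
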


\begin{proof}
  By Corollary~\ref{c:fin-dim}, $G/G_0$ is finite dimensional. By
  Theorem~\ref{t:main-claim}, there exists a compact normal subgroup
  $(K,U)$ in $\Delta G$ such that $K\subset G_0$ and $G/(K,U)$ is finite
  dimensional. Lemma~\ref{l:nonormal} implies that $K$ is
  trivial;  thus $G$ is finite dimensional because it is locally
  isomorphic to $G/K$.
\end{proof}

Finally, since $G_0$ is a compact subgroup of $G$, the following
finishes the proof of Theorem~\ref{t:Riemannian} according to
Example~\ref{e:lie} and Corollary~\ref{c:Gamma}.

\begin{lemma}
  The group $G$ is a local Lie group.
\end{lemma}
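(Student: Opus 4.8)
The plan is to combine the structure theory of finite dimensional local groups (Theorem~\ref{t:jacoby-product}) with the effectiveness encoded in Lemma~\ref{l:nonormal}. Since the preceding lemma shows that $G$ is finite dimensional, and $G$ is compact, metrizable (via the supremum metric $d$), hence locally compact and second countable, Theorem~\ref{t:jacoby-product} applies: $G$ is locally isomorphic to a direct product $\Lambda\times P$, where $\Lambda$ is a Lie group and $P$ is a compact zero-dimensional topological group. It then suffices to prove that $P$ is finite, for in that case the germ at the identity of $\Lambda\times P$ agrees with that of $\Lambda$, so $G$ is a local Lie group. (Equivalently, $P$ finite means $G$ has no small subgroups, so one could instead invoke Theorem~\ref{t:jacoby-without small subgroups}.)

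First I would transport local connectedness to the orbit. By Lemma~\ref{l:psi}, $\psi\colon G/G_0\to Z$ is a homeomorphism of a neighborhood of the identity class onto a neighborhood of $x_0$; since $Z$ is locally connected, so is $G/G_0$ near the identity class. Writing $G\cong_{\mathrm{loc}}\Lambda\times P$ and letting $P$ denote also the corresponding compact zero-dimensional normal sub-local group of $G$ (the image of $\{e\}\times P$), the projection onto the zero-dimensional factor induces a continuous map $G/G_0\to P/P_0$, where $P_0$ is the image of $G_0$, whose target is totally disconnected. A continuous map from a locally connected space to a totally disconnected space is locally constant, so this map is trivial near the identity class; this forces $P_0$ to be open in $P$ and shows that the orbit neighborhood $\cong G/G_0$ is, near $x_0$, homeomorphic to the Lie homogeneous space $\Lambda/(\Lambda\cap G_0)$. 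In other words, the orbit sees only the Lie factor.

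Next I would use effectiveness. Because $\Lambda$ and $P$ commute in the direct product, each element $c\in P$ acts on the orbit $\cong\Lambda/(\Lambda\cap G_0)$ by a translation induced from $\Lambda$; concretely $c$ acts as right translation by an element $a(c)\in\Lambda$ with $(a(c),c)\in G_0$. Thus the action of $P$ on a neighborhood of $x_0$ factors through the translations of a Lie homogeneous space, i.e. through a Lie group. By quasi-analyticity of $\ol{\HH}$ (as exploited in Lemma~\ref{l:nonormal}), an element of $G$ acting trivially on a neighborhood of $x_0$ is the identity map, hence equals $e$; therefore this action of $P$ is faithful. A continuous injective homomorphism of the compact group $P$ into a Lie group is a homeomorphism onto its image, which is then a compact zero-dimensional subgroup of a Lie group, hence finite. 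So $P$ is finite and $G$ is a local Lie group.

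The main obstacle is the middle step: passing from local connectedness of $Z$ to the statement that the orbit is modeled on the Lie factor alone, and making rigorous the splitting that lets one read off the $P$-action as a translation action. Local connectedness by itself is consistent with $P$ being infinite—it only rules out the orbit detecting a nondiscrete totally disconnected direction—so the essential input that finally eliminates the zero-dimensional factor is quasi-analyticity, through the effectiveness statement of Lemma~\ref{l:nonormal}. Once faithfulness is in hand, the reduction of a compact zero-dimensional subgroup of a Lie group to a finite group is routine.
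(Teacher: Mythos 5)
Your overall strategy coincides with the paper's for the first half: both proofs start from finite dimensionality, apply Theorem~\ref{t:jacoby-product} to get a local decomposition of $G$ as (local Lie group)\,$\times$\,(compact zero-dimensional group) --- your $\Lambda\times P$ is the paper's $L\times N$ --- and both use local connectedness of $Z$, transported through $\psi$ of Lemma~\ref{l:psi}, to show the orbit cannot detect a nondiscrete totally disconnected direction. The endgames genuinely differ. The paper applies Jacoby's approximation theorem (Theorem~\ref{t:jacoby-approximated}) a second time: having shown $P=N\cap G_0$ is open in $N$, it picks $W$ with $W\cap P=W\cap N$ and a compact normal subgroup $K\subset W$ with $G/K$ a local Lie group; then $N\cap K$ is a normal subgroup of $G$ contained in $G_0$, hence trivial by Lemma~\ref{l:nonormal}, while $N/(N\cap K)$ being a zero-dimensional Lie group makes $N\cap K$ open, so $N$ is finite. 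You instead manufacture a faithful continuous homomorphism of (a neighborhood of $e$ in) the zero-dimensional factor into a Lie group of right translations of $\Lambda/(\Lambda\cap G_0)$, with faithfulness supplied by quasi-analyticity. Two remarks on that route: first, you need not the statement of Lemma~\ref{l:nonormal} (which concerns normal sub-local groups) but the stronger fact established inside its proof, that an element of $G$ acting trivially on a neighborhood of $x_0$ equals $e$; second, the translation step requires checking that $a(c)$ is well defined modulo $\Lambda\cap G_0$, automatically normalizes it, and depends continuously and homomorphically on $c$, and that right translations of a merely \emph{local} Lie homogeneous space form a Lie group --- all plausible, but each needing a verification the paper's route avoids.

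The genuine gap --- which you flag but do not close --- is in the middle step. The decomposition of Theorem~\ref{t:jacoby-product} is only local: it holds on a neighborhood $U$ of $e$, and the compact subgroup $G_0$ need not be contained in $U$. So ``the image $P_0$ of $G_0$'' under the projection to the zero-dimensional factor is not defined; even $\pi(G_0\cap U)$ need not be a subgroup, since $G_0\cap U$ is not one. Consequently your map $G/G_0\to P/P_0$, and with it the identification of the orbit with $\Lambda/(\Lambda\cap G_0)$ and the very existence of $a(c)$, have not yet been constructed. The paper supplies exactly the missing device: it sets $P=N\cap G_0$, shows $G_0/P$ is a Lie group (being locally isomorphic to the local Lie group $G/(N,U)$), and uses zero-dimensionality of $P$ to split a neighborhood $V\subset U$ of $e$ in $G_0$ as $M\times P$ with $M$ a connected local Lie group contained in $L$; only then does a neighborhood of the identity class in $G/G_0$ split as $L/M\times N/P$, to which the local-connectedness argument (your locally-constant-map observation, in the paper's phrasing the finiteness of the factor $T\subset N/P$) applies, giving $N/P$ discrete and $P$ open in $N$. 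With that splitting of $G_0$ inserted, your endgame appears viable and would replace the paper's second appeal to Theorem~\ref{t:jacoby-approximated} by an elementary representation-theoretic finish; without it, the middle step fails in the local-group category.
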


\begin{proof}
  This is a local version of \cite[Theorem 73]{pontriaguin}.  By
  Theorem~\ref{t:jacoby-without small subgroups}, it is enough to show
  that $G$ has no small subgroups. The local group $G$ is finite
  dimensional and metrizable, so Theorem~\ref{t:jacoby-product}
  implies that there is a neighborhood $U$ of $e$ in $G$ which
  decomposes as the direct product of a local Lie group $L$ and a
  compact zero-dimensional normal subgroup $N$.  Then $P=N\cap G_0$ is
  a normal subgroup of $G_0$, and $G_0/P$ is a Lie group because it is
  a group which is locally isomorphic to the local Lie group $G/(N,U)$
  (\textit{cf}. \cite[Theorem 36]{jacoby}).
  
  Furthermore, since $N$ is zero-dimensional, so is $P$. Thus there exists a
  neighborhood $V$ of $e$ in $G_0$ which is the direct product of a connected
  local lie group $M$ and the normal subgroup $P$. It may be assumed that
  $V\subset U$. Since $M$ is connected and $N$ is zero-dimensional, it
  follows that $M\subset L$.  
%
%
  Summarizing, there is a local isomorphism between $G$ and the direct
  product $L\times N$, which restricts to a local isomorphism of $G_0$
  to $M\times P$. Therefore, there exists a neighborhood of the class
  $G_0$ in $G/G_0$ which is homeomorphic to a neighborhood of the
  class of the identity in the product $L/M\times N/P$. It follows
  that a neighborhood of $x_0$ in $Z$ is homeomorphic to the product
  of an euclidean ball and an open subspace $T\subset N/P$. Since $Z$
  is by assumption locally connected and $N/P$ zero-dimensional, it
  follows that $T$ is finite, and hence that $N/P$ is a discrete
  space.
%
%
%
  So $P$ is an open subset of $N$ and thus there exists a neighborhood
  $W$ of $e$ in $G$ such that $W\cap P=W\cap N$. By the local
  approximation of Jacoby (Theorem~\ref{t:jacoby-approximated}), there
  exists a compact normal subgroup $K\subset W$ such that $G/K$ is a
  local Lie group.  Then $G_0$ contains $P\cap K$, which is equal to
  the normal subgroup $N\cap K$ of $G$ because $K\subset W$. Thus, by
  Lemma~\ref{l:nonormal}, $N\cap K$ is trivial. On the other hand,
  $N/(N\cap K)$ is a zero-dimensional lie group, hence $N\cap K$ is
  open in $N$. It follows that $N$ is finite, and thus that $G$ is a
  local lie group.
\end{proof}

\section{A description of transitive, compactly generated, strongly
equicontinuous and quasi-effective pseudogroups}

The following example is slightly more general than
Example~\ref{e:lie}.

\begin{example}\label{e:locally homogeneous} 
  Let $G$ be a locally compact, metrizable and separable local group,
  $G_0\subset G$ a compact subgroup, and $\Gamma\subset G$ a dense
  sub-local group.  Suppose that there is a left invariant metric on
  $G$ inducing its topology. This metric can be assumed to be also
  $G_0$-right invariant by the compactness of $G_0$.  Then the
  canonical local action of $\Gamma$ on some neighborhood of the
  identity class in $G/G_0$ induces a transitive strongly
  equicontinuous and quasi-effective pseudogroup of local
  transformations of a locally compact Polish space. In fact, this is
  a pseudogroup of local isometries in the sense of \cite{equicont}.
\end{example}

The proof of the following theorem is a straightforward adaptation of
the first part of the proof of Theorem~\ref{t:Riemannian}, by using
quasi-effectiveness instead of quasi-analyticity.

\begin{theorem}\label{t:locally homogeneous} 
  Let $\HH$ be a transitive, compactly generated, strongly
  equicontinuous and quasi-effective pseudogroup of local
  transformations of a locally compact Polish space, and suppose that
  $\ol{\HH}$ is also quasi-effective. Then $\HH$ is equivalent to a
  pseudogroup of the type described in Example~\ref{e:locally
    homogeneous}.
\end{theorem}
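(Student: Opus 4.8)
The plan is to replicate the first part of the proof of Theorem~\ref{t:Riemannian}, running it only as far as the analogue of Corollary~\ref{c:Gamma} and never invoking the theory of Hilbert's fifth problem: Example~\ref{e:locally homogeneous} asks only for a locally compact metrizable separable local group $G$, a compact subgroup $G_0$, a dense sub-local group $\Gamma$, and a left invariant metric, with no Lie structure and no finite dimensionality. First I would apply Theorem~\ref{t:isometrization} to fix a local metric $\mathfrak{D}$ on $Z$ making every element of $\HH$ a local isometry, and choose $\{(Z_i,d_i)\}_{i\in I}\in\mathfrak{D}$ realizing strong equicontinuity. Then, exactly as in Section~4, I would select a relatively compact open $U$, a member $V$ of the cover $\VV$ given by Proposition~\ref{p:equicontinuous} with $V\subset Z_{i_0}$, a set $D$ with $\ol{D}\subset V$, and a set $D'\subset D$ with nonempty interior chosen by Proposition~\ref{p:A,B} so that all translates $h(D')$ with $h\in\ol{\HH}_{DD'}$ lie in a fixed compact subset of $D$. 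This produces the candidate $G=\ol{\HH}_{DD'}$, topologized by the supremum metric $d$ coming from $d_{i_0}$.

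The two lemmas that $G$ is compact and that $G$ is a locally compact local group transfer with essentially cosmetic changes. The compactness argument already relies only on Proposition~\ref{p:equicontinuous} and Theorem~\ref{t:closure} (both of which require quasi-effectiveness, not quasi-analyticity), so it is unchanged. In the local-group lemma, the two appeals to quasi-analyticity in the proof of Theorem~\ref{t:Riemannian} must be rerouted. For the uniqueness of the $\ol{\HH}$-extension that defines the product $g_1\cdot g_2$, I would argue straight from quasi-effectiveness: fixing a symmetric set $S$ of generators of $\ol{\HH}$ closed under compositions and restrictions and realizing both strong equicontinuity and quasi-effectiveness, two extensions agreeing on a nonempty open set differ by a map of $S$ that is the identity there, hence the identity on its whole domain; the same argument applied to $\HH$ gives the uniqueness of its extensions. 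Notably this replaces the quasi-analyticity-plus-connectedness reasoning and removes any need for $D$ or $D'$ to be connected. For the continuity of multiplication and inversion I would invoke Lemma~\ref{l:olHH quasi-effective} in place of Corollary~\ref{c:olHH quasi-analytic}, using the standing hypothesis that $\ol{\HH}$ is quasi-effective: this is exactly what guarantees that convergence in the compact-open topology on a small set propagates to the enlarged domain $V$, which is what makes the operations continuous.

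With $G$ established, the remaining lemmas go through. Minimality (Theorem~\ref{t:minimal}, applicable since $\HH$ is transitive, compactly generated and strongly equicontinuous) yields that $\Gamma=\HH\cap G$ is a dense sub-local group and the analogue of Lemma~\ref{l:Gamma}, that $\HH$ is equivalent to the pseudogroup generated by the local action of $\Gamma$ on $D'$. Fixing $x_0$ in the interior of $D'$ and setting $G_0=\{g\in G: g(x_0)=x_0\}$, the local homogeneity argument (minimality together with Theorem~\ref{t:closure} and Proposition~\ref{p:equicontinuous}) shows the orbit map $\phi(g)=g(x_0)$ is locally transitive at $x_0$, and the proof of Lemma~\ref{l:no-normal-subgroup} gives that $G_0$ is a compact subgroup. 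Here I would deviate from Lemma~\ref{l:psi}: since $Z$ need not be finite dimensional, I would not use finite dimensionality but instead note that $G$, hence $G/G_0$, is compact, so the induced continuous bijection $\psi:G/G_0\to Z$ is a homeomorphism onto its image and thus onto a neighborhood of $x_0$ (equivalently, I could cite the orbit-map statement for locally transitive local actions recorded in Section~1). Finally $d$ is left invariant because the elements of $G$ are $d_{i_0}$-isometries, and may be taken $G_0$-right invariant by the compactness of $G_0$; so $(G,G_0,\Gamma,d)$ is precisely the data of Example~\ref{e:locally homogeneous}, and the analogue of Corollary~\ref{c:Gamma} identifies $\HH$, up to equivalence, with the pseudogroup described there.

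The main obstacle is the local-group lemma, and within it the continuity of the operations: this is the single point where the proof of Theorem~\ref{t:Riemannian} genuinely used quasi-analyticity of $\ol{\HH}$, and it is where the hypothesis that $\ol{\HH}$ is quasi-effective does the essential work, through Lemma~\ref{l:olHH quasi-effective}. Everything else is either formally identical or, as with Lemma~\ref{l:psi}, actually easier once one exploits the compactness of $G$ in place of finite dimensionality.
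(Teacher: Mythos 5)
Your proposal is correct and is essentially the paper's own proof: the paper disposes of Theorem~\ref{t:locally homogeneous} with the single remark that it is a straightforward adaptation of the first part of the proof of Theorem~\ref{t:Riemannian}, using quasi-effectiveness instead of quasi-analyticity, which is exactly what you carry out, stopping before any appeal to Jacoby's theory. Your two explicit reroutings --- uniqueness of the extensions defining the product via injectivity of the restriction maps $\rho^V_W$ (so that connectedness of $D$ is no longer needed, as indeed it must not be, since $Z$ is not assumed locally connected here) and Lemma~\ref{l:psi} via compactness of $G$ in place of finite dimensionality of $Z$ (which is unavailable) --- are precisely the adjustments the paper's ``straightforward adaptation'' tacitly requires.
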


The study of compact generation for the pseudogroups of
Example~\ref{e:locally homogeneous} is very delicate
\cite{meigniez92}. But those pseudogroups are obviously complete, and
it seems that compact generation could be replaced by completeness in
Theorem~\ref{t:locally homogeneous}, obtaining a better result. This
would require the generalization of our work \cite{equicont} to
complete strongly equicontinuous pseudogroups.

\section{Quasi-analyticity of pseudogroups}

The most elusive of the hypotheses of Theorem~\ref{t:Riemannian} is
that concerning the quasi-analyticity of the closure of a
quasi-analytic pseudogroup. We do not have an example of a pseudogroup
$\HH$ as in the main theorem whose closure fails to be quasi-analytic.
Thus this section offers some examples and observations relevant to
this problem.

It is quite easy to see that the definition of length space has a
local version as described in \cite[Section 13]{equicont}, and that
the two theorems of Section 15 of such paper are also available for
local length spaces.

There are many examples of metric spaces which admit actions of
pseudogroups of isometries which are not quasi-analytic.  The
following two are examples of real trees (see Shalen~\cite{shalen}).

\begin{example}
  Let $X=\R^2$ be endowed with the metric given by
  $$d((x_1,y_1),(x_2,y_2))=
\begin{cases}
  |y_1|+|x_1-x_2|+|y_2| & \text{if $x_1\ne x_2$}\\
  |y_1-y_2| & \text{if $x_1=x_2$}
\end{cases}
  $$
  Given a subset $F$ of the real axis there is an isometry $f$ of $X$
  given by
  $$
  f(x,y)=
  \begin{cases}
    (x,y) &\text{if $x\in F$} \\
    (x,-y) & \text{if $x\notin F$}
  \end{cases}
  $$

  This family of isometries $f$ forms a normal subgroup of the group
  of isometries of $X$. Thus this group is not quasi-analytic,
  although $X$ is not locally compact.
\end{example}

\begin{example}
  Let $X=\R*\R$ be the free product of two copies of $\R$. Then $X$
  has the structure of a real tree, and is a homogeneous space with
  respect to its group of isometries. It is not quasi-analytic.
\end{example}


\begin{defn}
  A local length space $X$ is analytic at a point $x\in X$ if the
  following holds: if $\gamma,\gamma'$ are geodesic arcs (parametrized
  by arc-length) defined on an interval about $0\in \R$, such that
  $\gamma(0)=\gamma'(0)=x$ and that $\gamma=\gamma'$ on some interval $(-a,0]$,
  then they have the same germ at $0$. The space $X$ is analytic if it
  is analytic at every point.
\end{defn}

\begin{example} 
  A Riemannian manifold is an analytic length space. Real trees with
  many branches, as in the above examples, are not analytic.

  In relation to Theorem~\ref{t:Riemannian}, if a local length space
  is known to be analytic at one point and admits a transitive action
  of a pseudogroup of local isometries, then it is analytic.
%
\end{example}

%
%
%

\begin{prop}
  Let $\HH$ be a pseudogroup of local isometries of an analytic local
  length space $X$. Then $\HH$ is quasi-analytic.
\end{prop}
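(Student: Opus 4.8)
The plan is to verify the quasi-analyticity condition directly. Suppose $h\in\HH$ restricts to the identity on a non-empty open set $W$, and let $x\in\dom h$ satisfy $x\in\ol{W}$; I must produce a neighborhood of $x$ on which $h$ is the identity. Since $h$ is continuous and equals the identity on $W$, and $x$ is a limit of points of $W$, one first obtains $h(x)=x$. I would then fix a small ball $B$ around $x$ contained in $\dom h$ on which $h$ is an isometry onto its image, chosen (using local compactness) so that any two of its points are joined by an arc-length geodesic lying in a slightly larger region where both the analyticity hypothesis and the isometry property of $h$ are available; this is a local Hopf--Rinow type statement for locally compact local length spaces. It then suffices to show $h(y)=y$ for every $y$ in a smaller ball $B'\subset B$.

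Fix such a $y$, choose $w\in W\cap B'$ (possible since $x\in\ol{W}$), and let $\gamma\colon[0,L]\to X$ be an arc-length geodesic with $\gamma(0)=w$ and $\gamma(L)=y$, lying in the region where $h$ is an isometry. Because $h$ is a local isometry, $h\circ\gamma$ is again a geodesic arc, and $h\circ\gamma(0)=h(w)=w=\gamma(0)$ since $w\in W$. As $W$ is open and $w\in W$, a whole initial segment $\gamma([0,\delta])$ stays in $W$, so $h\circ\gamma=\gamma$ on $[0,\delta]$.

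The heart of the argument is a continuation that propagates this agreement up to $L$. Set $t^*=\sup\{\,t\in[0,L]\mid h\circ\gamma=\gamma\text{ on }[0,t]\,\}$, so that $t^*\ge\delta>0$ and, by continuity, $h\circ\gamma(t^*)=\gamma(t^*)=:p$. If $t^*<L$, I would apply analyticity at $p$ to the two arc-length geodesics $s\mapsto\gamma(t^*+s)$ and $s\mapsto h\circ\gamma(t^*+s)$, both defined on an interval about $0$: they pass through $p$ at $s=0$ and, since $h\circ\gamma=\gamma$ on $[0,t^*]$, they coincide on some $(-a,0]$. Analyticity then forces them to have the same germ at $0$, hence to agree on some $[0,\epsilon)$, contradicting the definition of $t^*$. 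Therefore $t^*=L$ and $h(y)=h\circ\gamma(L)=\gamma(L)=y$, as required.

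I expect the main obstacle to be the geometric bookkeeping of the first step rather than the continuation argument: one must guarantee that nearby points are genuinely joined by geodesic arcs staying inside $\dom h$ and inside the region where $h$ is an isometry and where analyticity applies, which is exactly where the local Hopf--Rinow statement is needed. Once that is secured, the $\sup$ argument at the interior point $p$ is clean, using only that a local isometry carries geodesics to geodesics and fixes their backward germs.
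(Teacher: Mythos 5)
Your proof is correct and follows essentially the same route as the paper's: join a point of the fixed open set to a nearby point by a geodesic arc inside $\dom h$, note that a local isometry sends geodesic arcs to geodesic arcs fixing the initial germ, and invoke analyticity to force agreement. Your explicit continuation argument via $t^*=\sup\{t\mid h\circ\gamma=\gamma \text{ on }[0,t]\}$ carefully spells out the propagation step that the paper compresses into the single sentence about the image geodesic ``having the same germ at one of its endpoints,'' and your appeal to a local Hopf--Rinow statement makes explicit the geodesic-existence assumption the paper also uses tacitly.
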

\begin{proof}
  If $\HH$ is not quasi-analytic, then, by definition, there exists an
  element $f$ of $\HH$, an open subset $U$ in $\dom(f)$ such that
  $f|U=\id$, and a point $x_0$ in the closure of $U$ such that $f$ is
  not the identity in any neighborhood of $x_0$. Therefore, there is a
  sequence of points $x_n$ converging to $x_0$ such that $f(x_n)\ne
  x_n$. If $n$ is sufficiently large, then there is a geodesic arc
  contained in the domain of $f$ and joining $x_n$ and a point $y\in
  U$. This geodesic arc is mapped by $f$ to a distinct geodesic arc
  having the same germ at one of its endpoints.
\end{proof}

The following example shows  that the converse is false.

\begin{example}
  Let $X$ be the euclidean space $\R^2$ endowed with the metric
  induced by the supremum norm $\|(x,y)\}=\max \{ |x|,|y|\}$. Then $X$
  is a length space which is not locally analytic. Indeed, if $f:I\to
  R$ is a function such that $|f(s)-f(t)|\le |s-t|$ for all $s,t\in
  I$, then $t\in I\mapsto (t,f(t))\in X$ is a geodesic. However, every
  local isometry is locally equal to a linear isometry, hence the
  pseudogroup of local isometries is quasi-analytic.
\end{example}

%
%

\end{document}